\newtheorem{theorem}{Theorem}
\theoremstyle{plain}
\newtheorem{lemma}{Lemma}
\newtheorem{proposition}{Proposition}
\newtheorem{remark}{Remark}
\numberwithin{equation}{section}
\newcommand{\an}{\alpha_N}
\newcommand{\hn}{\mathcal{S}_N}
\newcommand{\qw}{\begin{equation}}
\newcommand{\qwe}{\end{equation}}
\newcommand{\rn}{r(N)}
\newcommand{\tn}{t(N)}
\newcommand{\ann}{\alpha_N^2}
\newcommand{\ort}{\mathbb{E}}
\newcommand{\comnd}{{{N}\choose{d}}}
\newcommand{\rsn}{\bar{S}_N}
\def \R {{\mathbb R}}
\begin{document}
\title[Universality and extremal aging for Dynamics of Spin Glasses]{Universality and extremal aging for Dynamics of Spin Glasses on sub-exponential time scales}
\author{G\'{e}rard Ben Arous, Onur G\"{u}n}
\address
{G\'{e}rard Ben Arous, Courant Institute\newline
\indent 251 Mercer Street\newline
\indent New York, NY 10012--1185, USA}%
\email{benarous@cims.nyu.edu}%
\address
{Onur G\"{u}n, CMI-LATP, Universit\'{e} de Provence\newline
\indent 39 rue Joliot-Curie\newline
\indent F-13453 Marseille cedex 13, FRANCE}%
\email{ogun@latp-mrs.univ.fr}%
\urladdr{http://www.onurgun.com}
\date{November 29, 2009}%
\subjclass{82C44; 60F10.}
\keywords{Random walk, random environment, SK model, REM, dynamics of spin glasses, aging.}%

\begin{abstract}
We consider Random Hopping Time (RHT) dynamics of the Sherrington - Kirkpatrick (SK) model and $p$-spin models of spin glasses. 
For any of these models and for any inverse temperature $\beta>0$ we prove that, on time scales that are sub-exponential in the dimension, the properly scaled {\it clock process} (time-change process) of the dynamics converges to an extremal process. Moreover, on these time scales, the system exhibits aging like behavior which we called {\it extremal aging}. In other words, the dynamics of these models ages as the random energy model (REM) does. Hence,
by extension, this confirms Bouchaud's REM-like trap model as a universal aging
mechanism for a wide range of systems which, for the first time, includes
the SK model.

\end{abstract}
\maketitle

\section{Introduction and Main Results}

Aging is one of the distinguishing features of the long-time behavior of the dynamics of a large class of important disordered systems, which includes mean-field spin glasses. 
Roughly, a system ages if its decorrelation properties are time-dependent: the older the system is, the longer it takes to forget its state, or equivalently, the system is more and more frozen as it ages.

The theoretical modeling of aging had a breakthrough with the introduction of a simple model, the trap model, by Bouchaud and Dean in the early 90s \cite{bus}, \cite{buspar}. 
In this effective model, traps, representing low energy configurations, reproduce the slow dynamics seen experimentally while transitions between these trapping states are reduced to those of a large complete graph. 
These simplifications allow an elementary detailed analysis. An almost universal aging mechanism,  \cite{arcsine}, has since emerged, based on this simple model, which has been proved to be valid very broadly and 
in particular for Random Hopping Time (RHT) dynamics of mean-field spin glasses (for a general view of trap models, not restricted to the case of dynamics of spin glasses, see the lecture notes \cite{lecnotes}). 
This aging mechanism is as follows: in a given long  time scale (long but still transient, i.e. shorter than the time to reach equilibrium)  the system wanders around among deep traps of a given depth scale, 
the time spent in shallower traps being negligible. The time spent in those deep traps sampled by the path of the dynamics behaves as a sum of independent heavy-tailed random variables, even though, a priori, 
trapping times are neither independent nor heavy-tailed. This is usually stated as the fact that the natural clock of the system converges to a stable subordinator. 
The aging properties are then seen as natural consequences of this convergence, through the classical arcsine law. This picture, which is universal i.e. model-independent, is of course expected to break down for time scales 
long enough to reach equilibrium. In those time scales, since the equilibrium properties depend on the model, the behavior of the dynamics should also depend on the model.

The universality of stable subordinators has been proved to hold for the RHT dynamics of the Random Energy Model (REM) in \cite{bbg1},\cite{bbg2}, \cite{arcsine}, \cite{cerg}, and for $p$-spin models with $p\geq 3$ in \cite{bbc}, 
for a broad range of time scales, i.e times scales $t(N)= e^{cN}$ which are exponential in the size $N$ of the system but shorter than the equilibration time of the system (i.e. $c$ should be appropriately small).

However, this does not include the important case of the Sherringhton-Kirkpatrick (SK) spin glass (the case $p$=2). 
The dynamics of the SK model on exponential time scales seems to belong to a different universality class. 
On the other hand, the static results about equilibrium REM universality proved in \cite{bovkur}, \cite{resamp}, \cite{resampoz} suggest that the dynamics of the SK model should have REM-like behavior 
when observed on sub-exponential time scales $t(N)= e^{o(N)}$. This is one of the results we obtain here. In fact, we consider here the more general question of the RHT dynamics of mean-field spin glasses 
on sub-exponential time scales, and show that they are universal. The limiting picture cannot be linked to an $\alpha$-stable subordinator, since here the index $\alpha$ should be zero. 
In those time scales the process spends most of its time in one trap, the deepest trap it finds. The clock process is now related directly to what we call the ``maximal process'' which is basically the time spent 
in the deepest trap met by the system at a given time. Our statements will rely on the natural notion of extremal processes instead of subordinators. We are then led to introduce a new notion of ``extremal aging'' 
well suited to these time-scales. 

In the rest of this introduction we describe the models of spin glasses of whose dynamics we are studying, and then give our main result about extremal aging. 
We then proceed to give the core result, which is the convergence of the suitably normalized clock process and of the maximal process. We end this introduction by giving an outline of the proofs.

\subsection{The Models}
\vspace{0.1in}
Let us describe more precisely the class of models we are considering. Our state space is the $N$-dimensional hypercube, 
$\hn=\{-1,+1\}^N$. The Hamiltonian of the SK model and the $p$-spin models at $\sigma\in \hn$ is given by $-\sqrt{N}H_N(\sigma)$ where
\begin{equation}\label{pcor}
H_N(\sigma)=\frac{1}{N^{\frac{p}{2}}}\sum_{1\leq i_1,\dots,i_p\leq N}J_{i_1,\dots,i_p}\sigma_{i_1}\cdots\sigma_{i_p},\; p\in\mathbb{N},\;p\geq 2
\end{equation}
with $J_{i_1,\dots,i_p}$ i.i.d. standard normal random variables. Here $p=2$ is the SK model and $p\geq 3$ is the $p$-spin models. We will denote by $\mathcal{H}$ the $\sigma$-algebra generated by random variables $H_N(\sigma),\;\sigma\in \mathcal{S}_N$. Then the Gibbs 
measure at inverse temperature $\beta$ is given by
\begin{equation}
\mu_{\beta,N}(\sigma)=Z_{\beta,N}^{-1}\exp(\beta\sqrt{N}H_N(\sigma)),
\end{equation}
where $Z_{\beta,N}$ is the partition function.

We define RHT dynamics (trap model dynamics) as a nearest-neighbor continuous time Markov chain $\sigma_N(\cdot)$ on $\hn$ with transition rates
\begin{equation}\label{trans}
w_N(\sigma,\tau)=\left\{\begin{array}{ll}N^{-1}e^{-\beta\sqrt{N}H_N(\sigma)},& \text{if }\text{dist}(\sigma,\tau)=1,\\0,&\text{otherwise,}\end{array}\right.
\end{equation}
where $\text{dist}(\sigma,\tau)=\#\{i:\;\sigma_i\not=\tau_i\}$ is the graph distance on the hypercube. In other words, $\sigma_N(t)$ waits at a site $\sigma$ an exponential time with mean $\exp(\beta\sqrt{N}H_N(\sigma))$ then moves to one of the neighbors of $\sigma$ uniform at random.

We will consider these dynamics on time scales $\tn$ that are sub-exponential in dimension. We choose
\begin{equation}
\tn=\exp(\an N)
\end{equation}
with
\begin{equation}
\an=N^{-c},\; c\in(0,1/2).
\end{equation}
\subsection{Universality of Extremal Aging}
We want to investigate aging properties of the RHT dynamics on sub-exponential time scales $\tn$. We choose our two-time correlation function to characterize aging as in \cite{bbc}: for any $t,s>0$ and $\epsilon\in(0,1)$ let $A_N^{\epsilon}(t,s)$ be the event that the fraction of spins flipped between times $t$ and $s$ is less than $\epsilon/2$, that is

\begin{equation}
A_{N}^{\epsilon}(t,s)=\{\text{dist}(\sigma_N(t),\sigma_N(s))\leq N\epsilon/2\}.
\end{equation}

Our main result shows a universal aging phenomena in these models for sub-exponential time scales.

\begin{theorem}\label{theop22}(Extremal Aging for SK and $p$-spin models)

For the SK and the $p$-spin models, for any $c\in(0,1/4)$, for all $\theta>0$ and $\epsilon\in(0,1)$, let
\qw
t_1(N)=\tn,\;\; t_2(N)=\tn(1+\theta)^{1/\an},
\qwe
then
\begin{equation}
\mathbb{P}[A_N^{\epsilon}\left(t_1(N),t_2(N)\right)]\overset{N\to\infty}\longrightarrow \left(\frac{1}{1+\theta}\right)^{1/\beta^2}.
\end{equation}
Moreover, if $p\not=3$ the same result holds for any $c\in(0,1/2)$.
\end{theorem}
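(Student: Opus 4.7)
The plan is to relate the aging event to an extremal-type computation via the clock process of the dynamics. Let $S_N(n) = \sum_{k=0}^{n-1}\lambda_k e_k$ denote the clock process, where $\lambda_k = e^{\beta\sqrt{N}H_N(\sigma_k)}$ is the mean holding time at the $k$-th site $\sigma_k$ visited by the embedded chain (which is simple random walk on $\mathcal{S}_N$) and the $e_k$ are i.i.d.\ $\mathrm{Exp}(1)$, independent of everything else. Normalize by $t(N)$ and set $\tau_k = \lambda_k e_k/t(N)$. Since $H_N(\sigma_k)$ is standard Gaussian for any fixed $\sigma_k$ regardless of $p$, a direct Gaussian tail computation gives, for $x$ of moderate order,
\[
\mathbb{P}(\tau_k > x \mid \sigma_k)\sim C_N(\sigma_k)\, x^{-\alpha_N/\beta^2}.
\]
The effective tail index $\alpha_N/\beta^2$ tends to $0$, so on these sub-exponential scales the maxima dominate the sums: $S_N(n)/t(N) = (1+o(1))\max_{k<n}\tau_k$ with high probability. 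This is the qualitative difference with the exponential time scales of \cite{bbg1,bbc}, where the index is a positive constant and the clock converges to a stable subordinator.

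The next step is to show that, with probability tending to one, the walker at times $t\asymp t(N)$ sits at the site realizing this running maximum, because the typical residual holding time in the current record trap exceeds the clock time needed to find the next record. This gives the reduction
\[
A_N^{\epsilon}(t_1(N),t_2(N))\;\approx\;\{\text{record site unchanged between clock times } t_1(N)\text{ and }t_2(N)\},
\]
since two \emph{distinct} record sites are essentially independent uniform points of the hypercube and therefore lie at graph distance $\sim N/2\gg N\epsilon/2$ with high probability. The residual computation is exchangeability: in the limiting i.i.d.\ picture the overall maximum of $n_2$ draws lies among the first $n_1$ of them with probability $n_1/n_2$, and inverting the running maximum gives $n_i \asymp t_i(N)^{\alpha_N/\beta^2}$, whence
\[
\mathbb{P}\bigl[A_N^{\epsilon}(t_1(N),t_2(N))\bigr]\longrightarrow \left(\frac{t_1(N)}{t_2(N)}\right)^{\alpha_N/\beta^2} = \left(\frac{1}{1+\theta}\right)^{1/\beta^2},
\]
which is the desired limit.

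The hard part is the universality step: the depths $\lambda_k$ along a simple random walk path on $\mathcal{S}_N$ are strongly correlated (especially for the SK model $p=2$), so the $\tau_k$ are neither independent nor exchangeable, and their joint extremal behavior cannot be read off directly from the REM. The core of the proof must show that, on sub-exponential time scales, the deep traps actually visited by the chain behave as if they were i.i.d.\ samples, so that the Pareto-tail/extremal computation above applies. I would adapt the equilibrium REM-universality machinery of Bovier--Kurkova \cite{bovkur} and the resampling techniques of \cite{resamp,resampoz} to the dynamical setting, using the fast mixing of simple random walk on the hypercube to decouple consecutive deep visits, and control everything through a Laplace-transform or moment comparison argument against the REM. The more restrictive threshold $c<1/4$ for $p=3$ presumably reflects a specific cubic moment/correlation obstruction on short random-walk paths that is absent (or milder) for $p=2$ and $p\ge 4$, where the analogous even-moment estimates survive up to the full range $c<1/2$.
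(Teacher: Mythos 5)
Your heuristic picture and the final extremal/record computation are essentially right and equivalent to the paper's: the paper first proves (Theorem~\ref{theop1}) that $(\bar{m}_N(\cdot))^{\an}$ and $(\bar{S}_N(\cdot))^{\an}$ converge in $M_1$ to the extremal process $Y_\beta(K\cdot)$, then coarse-grains to get $J_1$ convergence of $\tilde S_N^{\an}$, and finally uses that the range of $Y_\beta$ is a Poisson point process on $(0,\infty)$ with intensity $\mu((a,b))=\log(b^{1/\beta^2}/a^{1/\beta^2})$, so that $\mathbb{P}[A_N^\epsilon(t_1,t_2)]\to\exp(-\mu((1,1+\theta)))=(1+\theta)^{-1/\beta^2}$. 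Your ``among the first $n_1$ of $n_2$ draws'' argument gives the same answer and is a legitimate alternative phrasing of the last step, provided the reduction to i.i.d.\ records were justified.

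The genuine gap is the step you explicitly defer, and it is not a loose end but the entire technical content of the paper. You write that you would ``adapt the equilibrium REM-universality machinery of Bovier--Kurkova and the resampling techniques \ldots to the dynamical setting,'' but you do not identify a mechanism. The paper's mechanism is concrete and has three interlocking ingredients, none of which is present in your sketch. First, one constructs an auxiliary Gaussian process $X_N^1$ that is block-independent on blocks of size $\nu=\lfloor N^\omega\rfloor$ (with $\omega\in(1/2+c,1)$) and has exactly linear covariance $1-2p|i-j|/N$ inside a block; the extremal statistics of $X_N^1$ are computed by an explicit change of variables and an Andersen-type ladder-epoch analysis of a Gaussian random walk with drift $\an\beta^{-1}\sqrt{p}$ (Propositions~\ref{esasli} and \ref{pasa}, Lemma~\ref{ilkfirs}), yielding $K=2\beta^{-2}p$. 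Second, a Slepian-type Normal Comparison Lemma is used, \emph{but only after a resampling at density $\rho\antk$}: because the number-of-jumps scale $\rn$ here is $\an^{-2}$ larger than in the REM, the naive comparison over all $\rn$ indices diverges, and the crucial new idea is to carry out the comparison (Proposition~\ref{berker}(i)) on the diluted cloud $w_\rho$ where the pair-count picks up a factor $\rho^2\an^4$; the untruncated comparison only gives the one-sided inequality in Proposition~\ref{berker}(ii). Third, the Gaussian-comparison bound is controlled $\mathcal{Y}$-a.s.\ through the random walk estimates of Lemma~\ref{aslanimik}, which count pairs $(i,j)\le t\rn$ at each Hamming distance $d$; this is where the mixing estimates of \cite{bbc,cerg,matt} enter, in a much sharper form than ``fast mixing decouples consecutive deep visits.'' Without these three pieces, the proposal proves nothing beyond the REM heuristic.

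Two further remarks. Your reduction ``record site unchanged'' is too crude even as a target statement: in the correlated models the deepest site is surrounded by a valley of radius $\sim\an^{-2}$ of comparably deep sites, so the object that stays put is a valley, not a site; this is precisely why the paper coarse-grains $S_N$ over blocks of size $\nu=o(N)$ and proves $J_1$-tightness via a Poissonian count $H_N^\delta$ (Lemma~\ref{kisrak}) before translating back to the aging event. And your guess about $p=3$ misses the actual mechanism: for even $p$ the correlations $\Lambda^0_{ij}$ are nonnegative and Proposition~\ref{berker}(ii) is trivial; for odd $p$ one must control the contribution of negative correlations at Hamming distance $d>N/2$, and the resulting bound $\an^{-2}N^{1-p/2}$ tends to $0$ for $p\ge4$ whenever $c<1/2$ but for $p=3$ becomes $\an^{-2}N^{-1/2}=N^{2c-1/2}$, which requires $c<1/4$. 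It has nothing to do with ``cubic moment obstructions on short random-walk paths.''
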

\begin{remark}
The weaker result for $p=3$ is due to technical reasons and we do not believe that the $p=3$ case has a different behavior than the other models.
\end{remark}
\begin{remark}
The above result is also true for the RHT dynamics of the REM on the same time scales $\tn$ (see \cite{bg09}). Hence, the aging properties of the REM is universal for SK and $p$-spin models on sub-exponential time scales.
\end{remark}
\begin{remark}
Note that the ratio of the two times $t_2(N)/t_1(N)=e^{\theta N^c}$ diverges with $N$ but since $c\in(0,1/2)$ the logarithmic ratio $\log t_2(N)/\log t_1(N)$ converges to 1 as $N\to\infty$. 
Hence, we can think of the decorrelation result of Theorem \ref{theop22} as ``just before aging". We have called this type of decorrelation behavior \textbf{extremal aging}. 
The reason for this choice of name will become clear later (see Theorem \ref{theop1}).
\end{remark}
\subsection{Extremal Processes as a universal limit for maximal and clock processes}
The proof of Theorem \ref{theop22} relies on the fact that the trap model dynamics can be constructed as a 
random time-change of a simple random walk (SRW) on $\hn$. Our main tool to understand the RHT dynamics of these 
models is to study this time change process which is called the clock process. 
More precisely, let $Y_N(k)\in\hn,k\in\mathbb{N}$ denote the simple random walk on $\hn$ started from a point $Y_N(0)$ and let $\mathcal{Y}$ denote the $\sigma$-algebra generated by it. For $\beta>0$ we define the {\it clock process} $S_N(k),k\in\mathbb{N}$ by
\qw
S_N(k)=\sum_{i=0}^{k-1}e_i \exp(\beta\sqrt{N}H_N(Y_N(i))),
\qwe
where $(e_i,\;i\in\mathbb{N})$ is a sequence of i.i.d. mean one exponential random variables. Then $\sigma_N(\cdot)$ can be written as
\qw
\sigma_N(t)=Y_N(S_N^{-1}(t)).
\qwe

Let $\mathcal{E}$ denote the $\sigma$-algebra generated by the random variables $(e_i,\;i\in\mathbb{N})$.
We will assume that all the random variables are defined on a common abstract probability space $(\Omega,\mathcal{F},\mathbb{P})$. 
Note that the $\sigma$-algebras $\mathcal{H},\mathcal{Y}$ and $\mathcal{E}$ are independent under $\mathbb{P}$.

We also introduce a process which keeps record of the mean waiting time corresponding to the lowest energy found on the trajectory. For $\beta>0$ we define the {\it maximal process} $m_N(k),k\in\mathbb{N}$ by
\begin{align*}
m_N(k):&=\exp\left\{-\beta\min_{0\leq i\leq k-1}-\sqrt{N}H_N(Y_N(i))\right\}\\&=\exp\left\{\beta\sqrt{N}\max_{0\leq i\leq k-1} H_N(Y_N(i))\right\}\\&=\max_{0\leq i\leq k-1}\exp\left\{\beta\sqrt{N}H_N(Y_N(i))\right\}.
\end{align*}
We also set $m_N(0)=0$.

We are interested in the asymptotic properties of the clock process and the maximal process on time scales $\tn$. To this end we need to introduce another scale $\rn$ given by
\qw\label{anamanam}
\rn=\an^{-1}\beta^{-1}\sqrt{2\pi N}\exp(\ann\beta^{-2} N/2).
\qwe
$\rn$ will be seen as the proper scaling for the number of jumps of the process $\sigma_N$ in the time scale $\tn$. Since we are assuming $c\in(0,1/2)$ the above scale is sub-exponential. Note that, the exponential term $\ann\beta^{-2}N/2$ diverges only if $c<1/2$. That is the reason we have $1/2$ as a natural upper bound for $c$, otherwise the number of jumps scale is growing at most polynomially.  

\newcommand{\pro}{\mathbb{P}}

The following theorem is our main result about the convergence of the maximal and clock processes:

\begin{theorem}\label{theop1}(Convergence of the maximal and clock processes for SK and $p$-spin models)

For the SK model and the $p$-spin models, for any $c\in(0,1/4)$, under the conditional distribution $\pro(\cdot|\mathcal{Y})$, $\mathcal{Y}$ a.s.

{\it (i)}
\qw\label{ana}
\left(\frac{m_N(\cdot \rn)}{\tn}\right)^{\an}\overset{N\to\infty}\longrightarrow Y_{\beta}(K\cdot),
\qwe

{\it (ii)}
\qw\label{anaiki}
\left(\frac{S_N(\cdot \rn)}{\tn}\right)^{\an}\overset{N\to\infty}\longrightarrow Y_{\beta}(K\cdot)
\qwe
weakly on the space of c\`{a}dl\`{a}g functions on $[0,T]$ equipped with the $M_1$-topology where $Y_{\beta}(\cdot)$ is the
 extremal process generated by $G(x)=\exp(-1/x^{1/\beta^2}),\;x>0$ and
\qw\label{anacik}
K=2\beta^{-2} p.
\qwe
Moreover, if $p\not=3$ the same results hold for any $c\in(0,1/2)$.
\end{theorem}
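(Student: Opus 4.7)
The plan is to first establish (i), convergence of the maximal process, and then derive (ii) by comparing the clock process $S_N$ to $m_N$ and showing that the sum is asymptotically equivalent to its maximal term after $\an$-th power renormalization. Throughout we work conditionally on the $\sigma$-algebra $\mathcal{Y}$: the collection $(H_N(Y_N(i)))_{i\ge 0}$ is then a centered Gaussian field with covariance $(Y_N(i)\cdot Y_N(j)/N)^p$, and $p$ enters the analysis only through this correlation structure.

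\emph{Maximal process.} For each $u>0$, the standard Gaussian tail asymptotic gives
\qw
p_N(u):=\pro\bigl(\exp(\beta\sqrt{N}H_N(\sigma))^{\an}>u\,\tn^{\an}\bigr)\sim \frac{\beta}{\an\sqrt{2\pi N}}\exp(-\ann N/(2\beta^2))\,u^{-1/\beta^2},
\qwe
so that, with $r_N$ as in \eqref{anamanam}, one has $r_N p_N(u)\sim \ani^2\, u^{-1/\beta^2}$. The divergence of this quantity reflects the clustering of high Hamiltonian values along the SRW path: since the covariance between Hamming-neighbors is $(1-2/N)^p\approx 1-2p/N$, once the SRW hits a site $\sigma^*$ with $H_N(\sigma^*)$ near the threshold $\an\sqrt{N}/\beta$, a short Gaussian drift computation shows that its subsequent neighbors typically stay above the threshold for a number of steps of order $\ani^2\beta^2/(2p)$. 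The key analytic step is therefore a second-moment / Poisson approximation estimate on the count of indices $i\le nr_N$ at which the normalized Hamiltonian exceeds $u$, showing that the number of \emph{distinct} high-value excursions is Poisson with mean $Knu^{-1/\beta^2}$ and $K=2p/\beta^2$. This yields $\pro\bigl((m_N(nr_N)/\tn)^{\an}\le u\bigr)\to \exp(-Knu^{-1/\beta^2})=\pro(Y_\beta(Kn)\le u)$; joint convergence follows from the monotonicity of $m_N$ and the independent-increment property of extremal processes, and $M_1$ convergence on $[0,T]$ is automatic for monotone c\`{a}dl\`{a}g processes once finite-dimensional marginals converge at continuity points of the limit.

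\emph{Clock process.} Writing $(S_N(k)/\tn)^{\an}=(S_N(k)/m_N(k))^{\an}\cdot (m_N(k)/\tn)^{\an}$, by (i) it suffices to show $(S_N(k)/m_N(k))^{\an}\to 1$ in probability uniformly in $k\in[0,Tr_N]$. The lower bound $S_N\ge e_{i^*}m_N$ with $i^*$ the argmax gives $e_{i^*}^{\an}\to 1$ $\mathcal{E}$-a.s.; for the upper bound one controls the non-extremal terms $\sum_{i\ne i^*}e_iX_i$, with $X_i:=\exp(\beta\sqrt{N}H_N(Y_N(i)))$, by using that the tail of $X_i^{\an}/\tn^{\an}$ is regularly varying of index $-1/\beta^2$, so that the maximum of any such collection asymptotically dominates the sum. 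The $M_1$ topology is natural because $S_N$ has many tiny jumps clustering around each large one, which would preclude $J_1$ convergence.

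\emph{Main obstacle.} The heart of the proof is the Poisson limit for the cluster count with the correct constant $K$. This requires uniform control of exponential moments of $H_N(Y_N(i))+H_N(Y_N(j))$ over pairs at varying Hamming distance, which reduces to estimating averages of $\exp((\mathrm{overlap}/N)^p)$ along the SRW. For $p=2$ the covariance decays only quadratically in distance, making these moments the largest; for $p=3$ certain higher-order pairwise estimates become critical and produce the technical restriction $c\in(0,1/4)$; for $p\ne 3$ the correlations decay fast enough to allow $c\in(0,1/2)$ by the same methods.
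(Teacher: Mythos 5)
Your heuristic picture is accurate: the raw exceedance rate $r_N p_N(u)\sim \an^{-2}u^{-1/\beta^2}$ is inflated by clustering, each cluster has $\approx\an^{-2}\beta^2/(2p)$ exceedances, and the cluster count should be Poisson with mean $Kn u^{-1/\beta^2}$, $K=2p\beta^{-2}$. But the key analytic step, which you label ``a second-moment / Poisson approximation estimate on the count of distinct high-value excursions,'' is precisely the hard part, and the paper does not do it that way. The paper's strategy is: (1) introduce an auxiliary, block-independent Gaussian process $X_N^1$ with a piecewise-linear covariance $1-2p|i-j|/N$ inside each block of size $\nu=\lfloor N^\omega\rfloor$; (2) compute its extremal statistics exactly via a change of variables reducing to first-passage of a Gaussian random walk with drift $\an\beta^{-1}\sqrt{p}$, which is solved with Sparre Andersen's identity (Proposition~\ref{esasli}, Lemma~\ref{ilkfirs}); (3) transfer these statistics to the true field $X_N^0=H_N\circ Y_N$ via the Normal Comparison Lemma (Theorem~\ref{lead}). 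Crucially, step (3) fails if applied directly on the full index set $\{1,\dots,t r(N)\}$: the error in the comparison bound is by a factor of order $\an^{-2}$ too large because $r(N)$ itself carries the extra $\an^{-2}$. The paper circumvents this with a resampling device — a random subset $w_\rho$ of density $\rho\an^2$ — which makes the comparison sum converge (Proposition~\ref{berker}(i)), together with a one-sided unrestricted comparison (Proposition~\ref{berker}(ii)) and Proposition~\ref{pasa} / Lemma~\ref{onder} which show the resampled and unrestricted extremal statistics coincide asymptotically as $\rho\to\infty$. None of this appears in your sketch: you neither use a Gaussian comparison inequality nor flag the scale obstruction and the device used to overcome it. Without an explicit mechanism for that step, the proposal has a genuine gap at its center.

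Two smaller corrections. First, the restriction $c\in(0,1/4)$ for $p=3$ has nothing to do with $p=2$ having ``slowly decaying correlations'': for $p$ even the covariance $\left(1-2d/N\right)^p$ is nonnegative, so the troublesome negative-correlation terms $(\Lambda_{ij}^0)_-$ in the comparison inequality vanish identically; for odd $p\geq 5$ they decay fast enough; $p=3$ is borderline, producing the error $\an^{-2}N^{1-p/2}$ which needs $c<1/4$. So $p=2$ is unconditionally fine, contrary to what your explanation suggests. Second, in the clock-to-maximal reduction, the sum is not dominated by the single maximal term: the cluster around the argmax contributes $\sim\an^{-2}$ comparably large terms, so one can only prove $S_N(t r(N))\leq A\an^{-2}m_N(t r(N))$ on a high-probability event (Lemma~\ref{godone}); the statement $(S_N/m_N)^{\an}\to 1$ then follows because $(A\an^{-2})^{\an}\to 1$, not because the maximum dominates the sum in the usual heavy-tail sense. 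Your appeal to regular variation and ``maximum dominating the sum'' thus misstates the mechanism, although the conclusion you want is correct and the renormalization does absorb the factor $\an^{-2}$.
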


\begin{remark}
The above Theorem is also true for the RHT dynamics of the REM for the time scale $\tn$. Hence, the REM dynamics picture is essentially universal for these models.
\end{remark}
\begin{remark}
For the RHT dynamics of REM the above theorem holds true with a slight difference in the number of jumps scale $\rn$. Specifically, in the REM dynamics, the corresponding number of jumps is $\an^2$ times $\rn$ of Theorem \ref{theop1}, [BG09]. This means that in order to find traps that are order of $\tn$ the SRW has to make more steps in the correlated  case than it needs to make in the independent case. Note that this was only a factor of of a constant for exponential time scales (see Theorem 1 in \cite{bbc} and Theorem 3.1 in \cite{arcsine}).
\end{remark}

We will explain in detail the $M_1$ topology in Section \ref{sectionfive}. 
Roughly, $M_1$-topology allows several big jumps made in a short time to produce one bigger jump, 
and as a result it is weaker than the usual Skorohord $J_1$-topology. 
Theorem \ref{theop1} is not true for $J_1$ topology. Due to the correlations in the energy landscape, 
neighbors of a deep point tend to be deep as well so that the clock process makes several consecutive large jumps. 
However, in the cases we study it turns out that these consecutive jumps are made in a very short time interval. 
Convergence in $J_1$ topology is sensitive to this kind of jumps made in very short time whereas convergence in $M_1$ topology is not. 
Naturally, for the REM model, where no correlations exist, one can expect convergence in $J_1$ topology and in fact we prove 
it in \cite{bg09}.

We will recall the definition of extremal processes in Section \ref{sectionfive}. 
One can think of an extremal process as a continuous version of a record process. 
It is natural that the maximal process $m_N$ converges to an extremal process. Theorem \ref{theop1} tells that the clock process is reduced to the contribution of the lowest energy found on the trajectory and converges to an extremal process as well.

\subsection{Discussion of the results}

Let us briefly discuss the results of Theorems \ref{theop22} and \ref{theop1}. In the language of trap models, 
a low energy state corresponds to a site with a deep trap. In the REM dynamics, on exponential time scales, 
the energy landscape explored by the dynamics is very heterogenous. The clock process is carried by the contributions 
from the deep traps found on the trajectory and it converges to an $\alpha$-stable subordinator, \cite{arcsine}. 
The same is basically true for the $p$-spin models on exponential time scales, the difference being that a deep trap consists of a valley of sites with low energies instead of a single site. However, the REM picture for the dynamics is not valid for the SK model (p=2) on these time scales.

In the REM dynamics, on sub-exponential time scales, eventually the deepest of these deep traps found on the trajectory dominates the clock process. Roughly speaking, in this case there are few deep traps and their depths are of the form $\tn x^{1/\an}$. As a consequence, the clock process has no non-trivial limit under any linear normalization. However, one can get a non-trivial limit by a non-linear normalization as in Theorem \ref{theop1}. Another consequence is that, after rescaling by $\tn$, the deepest trap dominates the clock process. This explains why we have same kind of convergence for the maximal and the clock processes. Briefly, it is enough to check the convergence of the maximal process in order to prove the convergence of the clock process. See \cite{g09} and \cite{bg09} for details. This picture is similar to the behavior of sums of i.i.d. random variables with slowly varying probability tails, see \cite{dar} and \cite{kas}.

Theorems \ref{theop22} and \ref{theop1} tell that the REM behavior on sub-exponential time scales is essentially valid for SK and $p$-spin models. 
Again, the difference is that a deep trap consists of a valley of sites with low energies instead of a single site. Moreover, we will see that the radius of these valleys are proportional to $\an^{-2}$.

\subsection{The Outline of the proofs}

The proof of Theorem \ref{theop1} basically follows the strategy of \cite{bbc}. Let us define
\qw
X_N^0(i):=H_N(Y_N(i)),\;i\in\mathbb{N}.
\qwe
Note that then $X_N^0$ is a Gaussian process parameterized by $\mathbb{N}$. It is easy to see from equation (\ref{pcor}) that
\qw
\mathbb{E}[X_N^0(i)X_N^0(j)]=\left(1-\frac{2\text{dist}(Y_N(i),Y_N(j))}{N}\right)^p.
\qwe
As explained above, the key part of Theorem \ref{theop1} is the convergence of the maximal process. 
Hence, we need to calculate statistics of the maximum of $X_N^0$. To do this, we pick another Gaussian process $X_N^1$ that 
has a simpler covariance structure that enables us to precise calculations about its extremes. 
Then, we compare the extremal statistics of $X_N^0$ and $X_N^1$ using Gaussian comparison techniques.

However, at the comparison stage we have an added difficulty. 
As mentioned earlier the number of jumps scale $\rn$ is larger in Theorem \ref{theop1} than in the REM case. 
The comparison arguments do not work with this scaling as we are comparing two Gaussian processes on a larger set. We come over this difficulty by a new re-sampling strategy.

We choose the auxiliary Gaussian process $X_N^1$ based on the following observations. In the time scales we are considering the trajectory of the SRW is locally very close to a straight line in the sense that: 
i) for times $t\leq \nu \sim N^{w},w<1$ the distance from the starting point grows essentially linearly with speed 1; ii) with a high probability the SRW walk will never return to a neighborhood of size $\nu$ of the starting point 
in $\rn$ number of steps. Next, we expect the energy landscape sampled by the SRW mainly consist of deep valleys whose statistics are asymptotically independent. Also, we expect that the SRW will be gone through a deep valley 
in $\nu$ number of steps for $\nu$ large enough. On the other hand, for sites inside a valley, by i) with a high probability $\text{dist}(Y_N(i),Y_N(j))=|i-j|$ and the covariance function $\mathbb{E}[X_N^0(i)X_N^0(j)]$ can be well 
approximated by the linear function $1-2p|i-j|/N$. Hence, we choose the replaced process $X_N^1$ as a block independent process with block size $\nu$ and with the linear covariance function $\mathbb{E}[X_N^1(i)X_N^1(j)]=1-2|i-j|/N$ for $i,j$ in the same block. 
This linear covariance structures allows us to calculate the extremal statistics in detail.

In order to prove Theorem \ref{theop22} we need to know more about how the jumps of the clock process occur. We will prove that if we coarse grain the clock process over blocks of size $o(N)$ the convergence statement of Theorem \ref{theop1} holds in $J_1$-topology. This means that jumps that are made in $\leq o(N)$ steps constitute a jump of the limiting process. Hence, during the time of one big jump only a negligible fractions of spins are flipped. We will actually prove a stronger version of Theorem \ref{theop22}:
\begin{theorem}\label{theo3}
Assume the hypothesis of Theorem \ref{theop1}. Under the conditional distribution $\pro(\cdot|\mathcal{Y})$, $\mathcal{Y}$ a.s.
\begin{equation}
\mathbb{P}[A_N^{\epsilon}\left(t_1(N),t_2(N)\right)|\mathcal{Y}]\overset{N\to\infty}\longrightarrow \left(\frac{1}{1+\theta}\right)^{1/\beta^2}.
\end{equation}
\end{theorem}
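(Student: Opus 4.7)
The plan is to promote the $M_1$-convergence in Theorem \ref{theop1}(ii) to a $J_1$-convergence statement at a coarser scale and then to extract the aging probability from the Poisson structure of the extremal process $Y_\beta(K\cdot)$. First I would prove the coarse-grained strengthening sketched in the introduction: after partitioning the SRW index set into consecutive blocks of size $b(N)$ chosen with $\an^{-2} \ll b(N) \ll N$, the block-maximum version of $(S_N(\cdot \rn)/\tn)^{\an}$ converges in the stronger $J_1$-topology to $Y_\beta(K\cdot)$. The mechanism is that the several consecutive large clock-increments coming from a single deep valley (the very reason $M_1$ and not $J_1$ holds on the fine scale) are contained in one such block, since these valleys have radius $O(\an^{-2}) = O(N^{2c})$, while distinct valleys contributing increments of order $\tn$ are separated by $\sim \rn$ SRW steps, hence by $\rn / b(N) \to \infty$ blocks.

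Next I would translate the aging event into a clock-process event. Since $\sigma_N(t) = Y_N(S_N^{-1}(t))$, the distance $\text{dist}(\sigma_N(t_1(N)), \sigma_N(t_2(N)))$ is controlled by $\Delta_N := S_N^{-1}(t_2(N)) - S_N^{-1}(t_1(N))$. If $[t_1(N),t_2(N)]$ falls inside the waiting interval attached to a single valley, then $\Delta_N \leq O(N^{2c}) = o(N)$, and hence $\text{dist} \leq \Delta_N < N\epsilon/2$ for $N$ large. Otherwise the SRW must exit its current valley and enter a distinct one, which uses $\sim \rn$ steps; since $\rn$ is super-polynomial whenever $c < 1/2$, a standard hypercube mixing estimate gives $\text{dist}(Y_N(k_1),Y_N(k_2)) \approx N/2 > N\epsilon/2$ with probability tending to one. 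Consequently, up to events of $\pro(\cdot|\mathcal{Y})$-probability $o(1)$, $\mathcal{Y}$-a.s., the aging event $A_N^{\epsilon}(t_1(N),t_2(N))$ is equivalent to the event that no jump of the coarse-grained clock process occurs in the open interval $(t_1(N),t_2(N))$.

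Then I would identify the limiting probability. The non-linear rescaling in Theorem \ref{theop1} gives $(t_1(N)/\tn)^{\an} = 1$ and $(t_2(N)/\tn)^{\an} = 1+\theta$, so the limit event is that the first level at which $Y_\beta$ exceeds $1$ is already $\geq 1+\theta$. Realising $Y_\beta$ as the running maximum of a Poisson point process on $(0,\infty)^2$ with intensity $dt \otimes d\nu$, $\nu((y,\infty)) = y^{-1/\beta^2}$ (the Lévy measure associated to $G(x)=\exp(-x^{-1/\beta^2})$), this probability equals $\nu((1+\theta,\infty))/\nu((1,\infty)) = (1+\theta)^{-1/\beta^2} = (1/(1+\theta))^{1/\beta^2}$, which matches the claim. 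Pushing the equivalence through the coarse-grained $J_1$-convergence by the continuous mapping theorem, and using independence of $\mathcal{H}$ and $\mathcal{E}$ from $\mathcal{Y}$ to upgrade to the conditional probability $\pro(\cdot|\mathcal{Y})$, gives Theorem \ref{theo3}.

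The main obstacle is the coarse-grained $J_1$-strengthening itself: one must simultaneously show that the cluster of large clock-increments attached to a single deep valley fits into a single block of size $b(N) \ll N$, and that two different valleys whose increments are of order $\tn$ are separated by $\gg b(N)$ SRW steps. Establishing these two spatial-scale estimates requires sharpening the block-independent comparison process $X_N^1$ of the outline of Theorem \ref{theop1} into quantitative statements about the geometry of near-maxima of $X_N^0$ inside a single block of length $b(N)$; this is the only genuinely new analytic ingredient beyond what is developed for Theorem \ref{theop1}.
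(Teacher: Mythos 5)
Your proposal matches the paper's proof essentially step for step: coarse-grain the clock process over blocks of size $\nu\asymp N^{\omega}$ (your $b(N)$, satisfying $\an^{-2}\ll\nu\ll N$), upgrade the $M_1$-convergence of Theorem \ref{theop1} to $J_1$-convergence at that coarse scale (this is precisely the paper's Proposition \ref{dayim}, built on the Poisson structure of inter-block exceedances in Lemma \ref{kisrak}), identify $A_N^\epsilon$ up to $o(1)$ with the event that the range of $\tilde S_N^{\an}$ misses $(1,1+\theta)$, and read off the limit from the Poisson structure of the range of $Y_\beta$. The only cosmetic difference is in the last computation: you derive $(1+\theta)^{-1/\beta^2}$ directly from the marked-PPP realisation of the extremal process, while the paper invokes Resnick's identification (Proposition 4.8 of \cite{res}) of the range of $Y_\beta$ as a Poisson point process with mean measure $\mu((a,b))=\log(b^{1/\beta^2}/a^{1/\beta^2})$.
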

\begin{remark}
 Taking the expectation over $\mathcal{Y}$, Theorem 3 implies Theorem 1. 
\end{remark}
The rest of this paper organized as follows: in Section 2 we obtain the results needed for the auxiliary Gaussian process, in Section 3 we compare the real and the auxiliary Gaussian processes, Section 4 contains the random walk results and in Section 5 we present the proofs of the main theorems.

\section{Extremal statistics of the auxiliary Gaussian process}

\newcommand{\vv}{\nu}
In this section we investigate the extremal distributions of the block independent Gaussian process $X_N^1(i)$, $i\in\mathbb{N}$ where
\qw
\mathbb{E}[X_N^1(i)X_N^1(j)]=\left\{\begin{array}{ll}1-\frac{2p|i-j|}{N}&\lfloor i/\nu\rfloor =\lfloor j/\nu\rfloor,\\0 &\text{otherwise.}\end{array}\right.
\qwe
The block size $\nu$ is given by
\begin{equation}
 \nu=\lfloor N^\omega \rfloor,\;\;\;w\in(1/2+c,1)
\end{equation}
Recall that $\an=N^{-c}$, $c\in(0,1/2)$. Hence, $\nu$ satisfies
\begin{equation}\label{bukadar}
 N^{1/2}\an^{-1}\ll \nu\ll N,\;\;\;1\ll \nu \an^{2}.
\end{equation}

Using the block independence it is enough to study the extremal statistics inside a block. 
To this end we define the Gaussian process $U=\{U_i,i=1,...,\nu\}$ as a centered Gaussian 
process with covariance $\mathbb{E}[U_i U_j ]=1-2p|i-j|/N$. Then $X_N^1$ is $\rn/\nu$ independent copies of $U$.

\newcommand{\antk}{\an^2}
As mentioned above, we are interested in the statistics of the maximum of $\exp(\beta\sqrt{N}U_i)$ on the scale $\tn=\exp(\an N)$, under the non-linear normalization of taking the $\an$th power. We can see that
\qw
\exp(\beta\sqrt{N}U_i)\geq x^{1/\an}\tn \Longleftrightarrow U_i \geq \frac{\an}{\beta}\sqrt{N}+\frac{\log x}{\an\beta\sqrt{N}}.
\qwe
We define
\qw
C_N(x):=\frac{\an}{\beta}\sqrt{N}+\frac{\log x}{\an\beta\sqrt{N}}.
\qwe
The following proposition describes the statistics of the maximum of $U_i$ for the relevant level $C_N(x)$.

\begin{proposition}\label{esasli} For all $p\in\mathbb{N}$, uniformly for $x$ in compact subsets of $(0,\infty)$
\begin{equation}
 \lim_{N \to \infty}\frac{\rn}{\vv}\mathbb{P}(\max_{i=1,..,\vv}  U_i \geq C_N(x))=K/x^{1/\beta^2},
\end{equation}
where
\qw
  K=2\beta^{-2} p.
\qwe
\end{proposition}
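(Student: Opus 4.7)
The plan is to compute the asymptotics of $\mathbb{P}(\max_{i=1,\ldots,\nu} U_i \geq C_N(x))$ via a Pickands-type analysis, exploiting that $U$ is stationary with linearly decreasing covariance $r_N(h) = 1 - 2p|h|/N$. First I would handle the one-point tail: since $U_1 \sim \mathcal{N}(0,1)$, Mill's ratio gives $\mathbb{P}(U_1 \geq C_N(x)) \sim (C_N(x)\sqrt{2\pi})^{-1}\exp(-C_N(x)^2/2)$. Expanding
$$C_N(x)^2/2 = \alpha_N^2 N/(2\beta^2) + \beta^{-2}\log x + O(N^{2c-1}),$$
where the correction vanishes since $c<1/2$, and substituting the explicit form of $r(N)$, one obtains directly $r(N)\,\mathbb{P}(U_1 \geq C_N(x)) \longrightarrow x^{-1/\beta^2}.$

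Next, set $u := C_N(x)$ and write $\psi(u) := \mathbb{P}(\mathcal{N}(0,1) \geq u)$. The process embeds as $X(t) = U_{\lfloor tN \rfloor}$, a stationary Gaussian process with short-range behavior $r(h) = 1 - 2p|h|$, i.e., Pickands exponent $\alpha = 1$ and coefficient $C = 2p$. Pickands' theorem with the explicit constant $H_1 = 1$ yields
$$\mathbb{P}\bigl(\sup_{t \in [0,\nu/N]} X(t) \geq u\bigr) \;\sim\; H_1 \cdot \tfrac{\nu}{N} \cdot (2p) \cdot u^2\,\psi(u) \;\sim\; \frac{2p\,\nu\,u\,e^{-u^2/2}}{N\sqrt{2\pi}}.$$
The discrete maximum matches the continuous supremum to leading order because the sampling spacing $1/N$ is much finer than the Pickands cluster scale $1/u^2 \sim \beta^2/(\alpha_N^2 N)$; their ratio diverges like $\alpha_N^{-2}$. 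Combining the two steps,
$$\frac{r(N)}{\nu}\,\mathbb{P}\bigl(\max_{i \leq \nu} U_i \geq C_N(x)\bigr) \;\sim\; \frac{2p\,r(N)\,u\,e^{-u^2/2}}{N\sqrt{2\pi}} \;\longrightarrow\; \frac{2p}{\beta^2}\,x^{-1/\beta^2} = K/x^{1/\beta^2},$$
uniformly for $x$ in compact subsets of $(0,\infty)$.

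The main obstacle will be deriving the Pickands estimate without citing Pickands' theorem as a black box, in keeping with the paper's direct-computation style. The natural route is the double-sum method: partition $\{1,\ldots,\nu\}$ into sub-blocks of length $\sim \alpha_N^{-2}$ (the cluster length at level $u$); use the decomposition $U_i = (1 - 2p(i-i_0)/N)U_{i_0} + \tilde U_i^{(i_0)}$ with $\tilde U_i^{(i_0)}\perp U_{i_0}$ behaving like a Brownian motion of variance $4p(i-i_0)/N$; rescale time by $s = (i-i_0)u^2/N$ so that the excursion near level $u$ is approximated by $y + W(s)$, where $W(s) = 2\sqrt{p}\,B(s) - 2ps$ and $y := u(U_{i_0} - u)$ is asymptotically exponential of rate $1$; then integrate the entry height against the one-point tail to get the single-sub-block probability. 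Summing over sub-blocks in a block, and then over blocks (which are independent by construction of $X_N^1$), gives the claim, provided one verifies that joint exceedances across sub-blocks contribute only a lower-order correction (a standard bivariate Gaussian bound). The hypothesis $\omega > 1/2+c$ guarantees both that each sub-block accommodates the typical cluster of length $\beta^2 N^{2c}/p$ and that enough sub-blocks fit in a block for the averaging to converge.
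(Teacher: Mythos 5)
Your Pickands-type route is correct and does arrive at $K=2p/\beta^2$, but it takes a genuinely different path from the paper's. The paper avoids any black-box excursion theory: it partitions the Gaussian integral according to the argmax index $k$, performs an explicit change of variables centered at that index, and reduces the problem to
\[
 \frac{1}{\nu\alpha_N^2}\sum_{k=1}^{\nu}\mathbb P(\tau_N\geq k)\,\mathbb P(\tau_N\geq \nu-k),
\]
where $\tau_N$ is the first passage to $0$ of a Gaussian random walk with small drift $\alpha_N\sqrt p/\beta$. Sparre--Andersen's formula (Feller, Vol.~II) then yields the exact asymptotics $\mathbb P(\tau_N=\infty)\sim K_1\alpha_N$ with $K_1=\beta^{-1}\sqrt{2p}$, and $K=K_1^2$. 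Conceptually this \emph{is} the double-sum method you sketch in your last paragraph --- the random walk $V_N(k)=\sum_i(Z_i+\alpha_N\sqrt p/\beta)$ staying positive on both sides of the argmax is exactly the discrete analogue of your $W(s)=2\sqrt p\,B(s)-2ps$ remaining negative --- but the paper computes the ladder structure exactly via generating functions rather than passing to a diffusion limit and invoking $H_1=1$. What your approach buys is brevity and a familiar framework, at the cost of having to justify (i) the discretization of the triangular-array supremum in terms of the continuous Pickands estimate and (ii) the uniformity in $x$ as the level, block size and dimension all scale together; the paper's bare-hands argument handles these automatically.

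One concrete error you should fix: the intermediate claim ``$r(N)\,\mathbb P(U_1\geq C_N(x))\to x^{-1/\beta^2}$'' is false. A direct Mill's-ratio computation gives
\[
 r(N)\,\psi(C_N(x))\sim\alpha_N^{-2}\,x^{-1/\beta^2}\longrightarrow\infty .
\]
The missing $\alpha_N^{2}$ is precisely the clustering factor: exceedances of $C_N(x)$ occur in clusters of size $\sim\alpha_N^{-2}$, so $r(N)$ is $\alpha_N^{-2}$ times the REM scale that would make the one-point tail converge (the paper remarks on exactly this in the discussion after Theorem \ref{theop1}). Your final Pickands display nevertheless lands on the right limit because the factor $u^2\sim\alpha_N^2 N/\beta^2$ in $H_1\cdot(\nu/N)\cdot(2p)\cdot u^2\psi(u)$ restores the $\alpha_N^2$, so the slip doesn't propagate --- but as written the ``one-point tail converges under $r(N)$'' sentence is wrong and conflates $r(N)$ with $\alpha_N^2 r(N)$.
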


As mentioned earlier we will compare the real and auxiliary Gaussian processes on a re-sampled set of indices. Now we describe the details of this re-sampling process inside a block. Let $(q_{i},\;i\in\mathbb{N})$ be a sequence of i.i.d. random variables with uniform distribution on $[0,1]$, independent from $U_i$'s. Let us denote by $\mathcal{U}$ and $\mathcal{W}$ the $\sigma$-algebras of $U_i$ and $q_i$, respectively. We assume that $\mathcal{U}$ and $\mathcal{W}$ is defined on the common probability space $\mathbb{P}$. Using $(q_{i},\;i\in\mathbb{N})$, we define the sequence of random variables $(w_{N,\rho}(i),i\in\mathbb{N})$ as $w_{N,\rho}(i)=1$ if $q_i\leq \rho\antk$ and $w_{N,\rho}(i)=0$ if $q_i>\rho\antk$. Thus, $(w_{N,\rho}(i),i\in\mathbb{N})$ is an i.i.d. sequence of Bernoulli random variables with
\begin{equation}
 \mathbb{P}(w_{N,\rho}(i)=1)=1-\mathbb{P}(w_{N,\rho}(i)=0)=\rho\antk.
\end{equation}
We want to investigate the maximum of $U_i$'s on the random set of indices defined by
\begin{equation}
w_{\rho}:=\{i\leq \nu:\;w_{N,\rho}(i)=1\}.
\end{equation}
In order to do this we need to know more about the number of $U_i$'s that are above the level $C_N(x)$.
\begin{proposition}\label{pasa} For all $p\in\mathbb{N}$ and $\rho>0$, there exists constants $C_1(\rho)=C_1(\rho;w,\beta,c,p)$ and $C_2(\rho)=C_2(\rho;w,\beta,c,p)$, such that uniformly for $x$ in compact subsets of $(0,\infty)$, for $N$ large enough
\begin{equation}
C_1(\rho)\frac{K}{x^{1/\beta^2}}\leq\frac{\rn}{\nu}\mathbb{E}\left[1-\exp\left\{-\rho\antk\sum_{i=1}^{\nu}\mathbf{1}\{U_i\geq C_N(x)\}\right\}\right]\leq C_2(\rho)\frac{K}{x^{1/\beta^2}},
\end{equation}
where $K=2\beta^{-2} p$ as in Proposition \ref{esasli}. Moreover, 
\begin{equation}
 \lim_{\rho\to\infty}C_i(\rho)=1,\;\;i=1,2.
\end{equation}
\end{proposition}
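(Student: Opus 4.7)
My plan is to treat the two bounds separately. Writing $N_\nu(x) := \sum_{i=1}^\nu \mathbf{1}\{U_i \geq C_N(x)\}$, the upper bound is immediate: the elementary inequality $1 - e^{-y} \leq \mathbf{1}\{y > 0\}$ gives
\[
\mathbb{E}\bigl[1 - \exp(-\rho \alpha_N^2 N_\nu(x))\bigr] \leq \mathbb{P}\bigl(N_\nu(x) \geq 1\bigr) = \mathbb{P}\bigl(\max_{1\leq i\leq \nu} U_i \geq C_N(x)\bigr),
\]
and Proposition~\ref{esasli} translates this into $\frac{\rn}{\nu}\mathbb{E}[\cdots] \leq (1 + o_N(1))K/x^{1/\beta^2}$, so we may take $C_2(\rho) = 1 + 1/\rho$, which tends to $1$.

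For the lower bound, a first-moment computation reveals the correct scale. The Gaussian tail $\mathbb{P}(U_1 \geq C_N(x)) \sim \frac{\beta}{\alpha_N \sqrt{2\pi N}} e^{-\alpha_N^2 N/(2\beta^2)} x^{-1/\beta^2}$ combined with the definition \eqref{anamanam} of $\rn$ yields $\alpha_N^2 \rn \mathbb{P}(U_1 \geq C_N(x)) \to x^{-1/\beta^2}$, and combining with Proposition~\ref{esasli} gives $\mathbb{E}[N_\nu(x) \mid N_\nu(x) \geq 1] \sim 1/(K\alpha_N^2)$. Hence clusters of exceedances have typical size of order $\alpha_N^{-2}$. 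Accordingly I would use the inequality $1 - e^{-\rho\alpha_N^2 N_\nu(x)} \geq (1 - e^{-\rho\delta})\mathbf{1}\{\alpha_N^2 N_\nu(x) \geq \delta\}$, take expectations, and apply Proposition~\ref{esasli}; it then suffices to establish a cluster-concentration estimate of the form
\[
\limsup_{N \to \infty}\frac{\rn}{\nu}\mathbb{P}\bigl(1 \leq N_\nu(x) < \delta/\alpha_N^2\bigr) \leq g(\delta)\,\frac{K}{x^{1/\beta^2}}
\]
with $g(\delta) \to 0$ as $\delta \to 0$. Given this, $C_1(\rho) := \sup_{\delta > 0}(1 - e^{-\rho\delta})(1 - g(\delta))$ is strictly positive for every $\rho$ and tends to $1$ as $\rho \to \infty$ (take $\delta = \delta(\rho) \to 0$ with $\rho\delta(\rho) \to \infty$).

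The main obstacle is the cluster-concentration estimate. To prove it I would exploit the quasi-Brownian structure of $U$: the covariance $\mathbb{E}[U_i U_j] = 1 - 2p|i-j|/N$ yields, conditional on $U_i = u$, the representation $U_{i+l} = (1 - 2pl/N)\,u + \sqrt{1 - (1-2pl/N)^2}\,\tilde B_l$ with $\tilde B$ a time-changed standard Brownian motion independent of $u$. Conditional on $U_i \geq C_N(x)$, Mills' ratio makes the overshoot $\eta := U_i - C_N(x)$ asymptotically $\operatorname{Exp}(C_N(x))$-distributed; introducing $s = \alpha_N^2 l$ and $v = \eta\, C_N(x) \sim \operatorname{Exp}(1)$, the condition $U_{i+l} \geq C_N(x)$ becomes the $O(1)$ inequality $\tilde B_s \geq \sqrt{ps}/\beta - v\beta/(2\sqrt{ps})$, which is automatic for $s \leq v/K$. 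Integrating over $v$, the conditional cluster size $\alpha_N^2 N_\nu(x)\mid N_\nu(x) \geq 1$ is stochastically bounded below by a positive random variable essentially distributed as $v/K \sim \operatorname{Exp}(K)$, giving $g(\delta) \approx 1 - e^{-K\delta} \to 0$. The technical subtlety — and the bulk of the work — is making this heuristic rigorous in the presence of the \emph{first exceedance} conditioning $\{U_j < C_N(x)\ \forall j < k\}$ that arises when one partitions $\{N_\nu(x) \geq 1\} = \sqcup_k E_k$; showing that this backward constraint does not meaningfully distort the forward Brownian-increment behavior is essentially a boundary-layer estimate in Gaussian conditional densities, and is where the bulk of Section~2 will be spent.
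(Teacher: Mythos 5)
Your upper bound is correct and is in fact quicker than the paper's: the inequality $1-e^{-y}\leq \mathbf{1}\{y>0\}$ combined with Proposition~\ref{esasli} immediately gives $C_2(\rho)=1+1/\rho$ for $N$ large. The paper instead obtains both bounds simultaneously through the same change-of-variables computation used for Proposition~\ref{esasli}, which gives $C_2(\rho)=\int_0^\infty(1\wedge\rho c_2 y)e^{-y}\,dy<1$ — a slightly sharper constant, but yours satisfies the statement just as well.

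For the lower bound, however, you have laid out a strategy rather than given a proof, and you correctly identify the load-bearing step as the cluster-concentration estimate
$\limsup_N \frac{\rn}{\nu}\mathbb{P}(1\leq N_\nu(x)<\delta\alpha_N^{-2})\leq g(\delta)K/x^{1/\beta^2}$ with $g(\delta)\to0$. This estimate, uniformly in $x$ on compacts, is essentially the entire content of Proposition~\ref{pasa}, so the reduction you perform — via $1-e^{-\rho\alpha_N^2 N_\nu}\geq (1-e^{-\rho\delta})\mathbf{1}\{\alpha_N^2 N_\nu\geq\delta\}$ and $C_1(\rho)=\sup_\delta(1-e^{-\rho\delta})(1-g(\delta))$ — moves the problem rather than solves it. Your sketch of the cluster estimate captures the right mechanism: an $\operatorname{Exp}(1)$ overshoot at scale $1/C_N(x)$ plus a near-Brownian residual with drift, leading to cluster sizes of order $\alpha_N^{-2}$ with a non-degenerate limit law. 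But two things remain genuinely open in your outline. First, the representation $U_{i+l}=(1-2pl/N)u+\sqrt{1-(1-2pl/N)^2}\,\tilde B_l$ is only approximate: the conditional covariance of the residual is $4p\min(l,m)/N - 4p^2lm/N^2$, not exactly Brownian, and the error must be shown to be negligible at scale $l\sim\alpha_N^{-2}$. Second, and more seriously, conditioning on the \emph{first} exceedance introduces a backward constraint $\{U_j<C_N(x)\ \forall j<k\}$ that is correlated with the forward cluster through the value at $k$, and your proposal gives no argument that this conditioning does not concentrate the overshoot near zero. You flag this yourself, but flagging it is not resolving it.

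By contrast, the paper sidesteps the backward-constraint problem entirely by partitioning on the argmax $k$ (not the first exceedance), so that the forward and backward constraints appear symmetrically as the random-walk event $D_k''=\{\sum_{j=i+1}^{k}a_j>-|i-k|\alpha_N\beta^{-1}\sqrt{p}\ \forall i\neq k\}$, which is handled once and for all by the Sparre Andersen machinery already set up for Proposition~\ref{esasli}. The crucial quantitative input is inequality~(\ref{jumbo}), which says that the conditional contribution of a cluster, given the (rescaled) overshoot $y$ and $W\in\bar D_k''$, is sandwiched between $(1\wedge\rho c_1 y)P(W\in\bar D_k'')$ and $(1\wedge\rho c_2 y)P(W\in\bar D_k'')$; this is precisely a Laplace-transform version of your cluster-concentration claim, but obtained directly from the Gaussian integral rather than from a stochastic-process heuristic. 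So the two approaches encode the same probabilistic picture, but the paper's analytic route is closed, whereas yours requires proving the cluster-concentration estimate — including its uniformity in $x$ — and that is where the actual difficulty of the proposition lives.
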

This proposition tells us that when the maximum of $U_i$'s is above than $C_N(x)$, roughly $\an^{-2}$ of $U_i$'s are also above $C_N(x)$. 
This explains why in the correlated models, in order to find traps of the order $\tn$, the SRW has to make $\an^{-2}$ times 
the number of steps needed in the independent case. That is the reason we choose $\antk$ as the density in the re-sampling scheme.
\newcommand{\kac}{\sum_{i=1}^{\nu}\mathbf{1}\{U_i\geq C_N(x)\}}

\begin{lemma}\label{onder} For all $p\in\mathbb{N}$, for any $\rho>0$, there exists a constant $C(\rho)$ s.t. uniformly for $x$ in compact subsets of $(0,\infty)$ for $N$ large enough
 \begin{equation}\label{hosaf}
  C(\rho)\frac{K}{x^{1/\beta^2}}\leq \frac{\rn}{\nu}\mathbb{P}(\max_{i\leq \nu,i\in w_{\rho}}  U_i \geq C_N(x))\leq \frac{K}{x^{1/\beta^2}},
 \end{equation}
where $K=2\beta^{-2} p$ is as in Proposition \ref{esasli}. Moreover we have,
\begin{equation}
 \lim_{\rho\to\infty}C(\rho)=1.
\end{equation}
\end{lemma}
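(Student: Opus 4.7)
The plan is to condition on the Gaussian field $\mathcal{U}$ and exploit the independence of the Bernoulli selectors $w_{N,\rho}(i)$. Writing $N_x := \sum_{i=1}^{\nu}\mathbf{1}\{U_i \geq C_N(x)\}$, the event $\{\max_{i \leq \nu,\,i \in w_\rho} U_i \geq C_N(x)\}$ occurs iff at least one of the $N_x$ indices $i$ with $U_i \geq C_N(x)$ is selected (i.e.\ has $w_{N,\rho}(i)=1$). Since the $w_{N,\rho}(i)$ are i.i.d.\ Bernoulli$(\rho\an^2)$ independent of $\mathcal{U}$,
\begin{equation*}
\pro\bigl(\max_{i\leq\nu,\,i\in w_\rho} U_i \geq C_N(x)\,\big|\,\mathcal{U}\bigr) = 1-(1-\rho\an^2)^{N_x}.
\end{equation*}
This identity is the whole backbone of the argument; the two bounds of the lemma come from estimating this quantity in two different directions and then taking $\mathcal{U}$-expectation to feed it into Propositions \ref{esasli} and \ref{pasa}.

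For the upper bound, I would simply drop the restriction to $w_\rho$: since $w_\rho \subseteq \{1,\dots,\nu\}$,
\begin{equation*}
\pro\bigl(\max_{i\leq\nu,\,i\in w_\rho} U_i \geq C_N(x)\bigr) \leq \pro\bigl(\max_{i\leq\nu} U_i \geq C_N(x)\bigr),
\end{equation*}
and Proposition \ref{esasli} gives the desired limit $K/x^{1/\beta^2}$ after multiplication by $\rn/\nu$, uniformly on compact subsets of $(0,\infty)$. (Strictly, one gets $(1+o(1))K/x^{1/\beta^2}$, which is what the statement means by ``for $N$ large enough''.)

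For the lower bound, I would use the elementary inequality $(1-t)^n \leq e^{-nt}$ for $t \in [0,1]$, $n \in \N$, applied with $t = \rho\an^2$ and $n = N_x$. This yields
\begin{equation*}
1-(1-\rho\an^2)^{N_x} \geq 1 - \exp\bigl(-\rho\an^2 N_x\bigr) = 1 - \exp\Bigl\{-\rho\an^2 \sum_{i=1}^{\nu}\mathbf{1}\{U_i \geq C_N(x)\}\Bigr\}
\end{equation*}
pointwise on $\Omega$. Taking expectation over $\mathcal{U}$ and multiplying by $\rn/\nu$, Proposition \ref{pasa} gives precisely the lower bound $C_1(\rho) K/x^{1/\beta^2}$ for $N$ large enough, uniformly in $x$ in compact subsets of $(0,\infty)$. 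Setting $C(\rho) := C_1(\rho)$ gives the constant in \eqref{hosaf}, and the last assertion $\lim_{\rho\to\infty} C(\rho)=1$ follows from the corresponding statement for $C_1(\rho)$ in Proposition \ref{pasa}.

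There is no serious obstacle here: Propositions \ref{esasli} and \ref{pasa} do all the real work, and the role of this lemma is essentially bookkeeping — it translates the (technical) truncated-exponential moment bound of Proposition \ref{pasa} into a statement about the maximum over a Bernoulli-thinned set of indices, which is the form that will be used when comparing the real and auxiliary Gaussian processes on the re-sampled index set. The only subtlety worth flagging is that the upper bound is stated without a constant factor $C_2(\rho)$, so one must use the unconditional bound via Proposition \ref{esasli} rather than the cruder bound $1-e^{-\rho\an^2 N_x} \leq \rho\an^2 N_x$ combined with Proposition \ref{pasa} (which would only give $C_2(\rho) K/x^{1/\beta^2}$).
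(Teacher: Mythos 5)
Your proof is correct and follows essentially the same route as the paper: the upper bound drops the restriction to $w_\rho$ and applies Proposition~\ref{esasli}, while the lower bound uses the conditional identity $\pro(\max_{i\leq\nu, i\in w_\rho} U_i \geq C_N(x)\,|\,\mathcal{U}) = 1-(1-\rho\alpha_N^2)^{N_x}$ together with $(1-t)^n\leq e^{-nt}$ and Proposition~\ref{pasa}, setting $C(\rho)=C_1(\rho)$. The paper does exactly this, so there is nothing to add.
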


The proof of Lemma \ref{onder} follows easily from this Propositions  \ref{esasli} and \ref{pasa}:

\begin{proof}[Proof of Lemma \ref{onder}]
It is clear that
\begin{equation*}
\mathbb{P}(\max_{i\leq \nu,i\in w_{\rho}}  U_i \geq C_N(x))\leq \mathbb{P}(\max_{i\leq \nu}  U_i \geq C_N(x)).
\end{equation*}
Then the upper bound follows from Proposition \ref{esasli}.

Using the Bernouilli distributions we have
\begin{equation}\label{cool}
 \mathbb{P}(\max_{i\leq \nu,i\in w_{\rho}} U_i\geq C_N(x)|\mathcal{U})=1-\left(1-\rho\antk\right)^{\sum_{i=1}^{\nu}\mathbf{1}\{U_i\geq C_N(x)\}}.
\end{equation}
Using the inequality $1-x\leq e^{-x},\;x\geq 0$ we have
\begin{equation*}
1-\left(1-\rho\antk\right)^{\sum_{i=1}^{\nu}\mathbf{1}\{U_i\geq C_N(x)\}}\geq 1-e^{-\rho\antk\kac}.
\end{equation*}
Hence, by (\ref{cool}) and Proposition \ref{pasa}, for $N$ large enough
\begin{align*}
 \frac{\rn}{\nu}\mathbb{P}(\max_{i\leq \nu,i\in w_{\rho}} U_i\geq C_N(x))\geq \frac{\rn}{\nu} \mathbb{E}[1-e^{-\rho\antk\kac}]\geq C_1(\rho)\frac{K}{x^{1/\beta^2}}
\end{align*}
where $C_1$ is as in Proposition \ref{pasa}. Thus, setting $C=C_1$ finishes the proof.
\end{proof}

The rest of this section is devoted to the proofs of Proposition \ref{esasli} and Proposition \ref{pasa}.

\newcommand{\rnreal}{\an^{-1}\beta^{-1}\sqrt{2\pi N}\exp(\antk\beta^{-2}N/2)}

\begin{proof}[Proof of Proposition \ref{esasli}]
Recalling that $\rn=\rnreal$ the statement of Proposition \ref{esasli} is equivalent to

\begin{equation}\label{ilker}
 \lim_{N \to \infty} \frac{\beta^{-1}\sqrt{2\pi N}e^{\an^2 \beta^{-2} N/2}}{\an \vv}\mathbb{P}\left(\max_{i=1,..,\vv} U_i \geq \frac{\an}{\beta} \sqrt{N} +\frac{\log{x}}{\an \beta \sqrt{N}}\right)=K/x^{1/\beta^2}.
\end{equation}
It is a well-known fact (see e.g. \cite{sle}) that random variables $U_i$ can be expressed using a sequence of i.i.d. standard normal random variables $Z_i$. $U_i$'s can be written as
\begin{equation}
 U_i=\Gamma_1 Z_1+\dots+\Gamma_i Z_i-\Gamma_{i+1}Z_{i+1}-\dots -\Gamma_{\vv}Z_{\vv},
\end{equation}
where
\begin{equation}
 \Gamma_1=\sqrt{1-\frac{p}{N}(\vv-1)} \;\; \text{ and  } \Gamma_2=\dots=\Gamma_{\vv}=\sqrt{\frac{p}{N}}.
\end{equation}
Observe that $\sum_{i=1}^{\vv}\Gamma_i^2 =1$. Let us define $G_i(z)=G_i(z_1,\dots,z_{\vv})$ as
\begin{equation}
 G_i(z)=\Gamma_1z_1+\dots\Gamma_i z_i-\Gamma_{i+1}z_{i+1}-\dots-\Gamma_{\vv}z_{\vv}.
\end{equation}
Hence the probability term in (\ref{ilker}) is equal to
\begin{equation}\label{anapar}
\int_{\mathbb{R}^\nu} \frac{\text{d}z}{(2\pi)^{\nu/2}}e^{-\frac{1}{2×}\sum_{i=1}^\nu z_i^2}\mathbf{1}\{\;\max_{i=1,\dots,\nu}G_i(z)\geq C_N(x)\}.
\end{equation}
\newcommand{\ger}{\mathbb{R}}
Note that since the distribution of Gaussian process is continuous, a.s. there exists only one maximum. We partition the domain of integration according to the index of the maximum of $G_i(z)$'s. Define
\begin{equation}
 D_k:=\{z\in\mathbb{R}^\nu:\; G_k(z)>G_i(z)\;\forall i\not= k\}.
\end{equation}
Then the integral (\ref{anapar}) is equal to
\begin{equation}\label{anapariki}
 \sum_{k=1}^\nu \int_{D_k}\frac{\text{d}z}{(2\pi)^{\nu/2}}e^{-\frac{1}{2×}\sum_{i=1}^\nu z_i^2} \mathbf{1}\{G_k(z)\geq C_N(x)\}.
\end{equation}
\newcommand{\alb}{\frac{\an}{\beta}}
On the set $D_k$ we do the following change of variables 
\begin{equation}
 \begin{array}{cc} z_i=b_i+\Gamma_i \alb\sqrt{N}&\text{if }i\leq k,\\ z_i=b_i-\Gamma_i \alb\sqrt{N}&\text{if }i>k.\end{array}
\end{equation}
Then
\begin{equation}\label{nealaka}
 G_i(z)=G_i(b)+(1-2|i-k|\frac{p}{N})\alb\sqrt{N}.
\end{equation}
It will be useful to define $\sum_{j=i+1}^k a_j$ as $\sum_{j=1}^k a_j -\sum_{j=1}^{i}a_j$ which is also meaningful for $i+1>k$. Using this definition
\begin{equation}\label{alakasiz}
 G_k(b)-G_i(b)=2\sum_{j=i+1}^k \Gamma_j b_j.
\end{equation}
Combining (\ref{nealaka}) and (\ref{alakasiz}) we have
\begin{align*}
 G_k(z)-G_i(z)&=2\sqrt{\frac{p}{N}}\sum_{i+1}^k b_j +2|i-k|\frac{\an p}{\beta\sqrt{N}},
\end{align*}
as a result $D_k$ is mapped to
\begin{equation}
 D_k'=\{b\in \ger^\nu:\;\sum_{j=i+1}^k b_j> -|i-k|\alb\sqrt{p}\;\forall i\not= k\}.
\end{equation}
Also,
\begin{equation*}
 -\frac{1}{2}\sum_{i=1}^\nu z_i^2 = -\frac{1}{2}\sum_{i=1}^\nu b_i^2 -G_k(b)\alb\sqrt{N} -\frac{\antk \beta^{-2} N}{ 2}.
\end{equation*}
Thus, after the change of variables (\ref{anapariki}) is equal to
\begin{equation}
 e^{-\antk \beta^{-2} N/2}\sum_{k=1}^\nu \int_{D_k'}\frac{\text{d}b}{(2\pi)^{\nu/2}}e^{-\frac{1}{2}\sum_{i=1}^\nu b_i^2}e^{-G_k(b)\alb \sqrt{N}}\mathbf{1}\{G_k(b)>\frac{\log x}{\an\beta\sqrt{N}}\}.
\end{equation}
Hence to finish the proof of Proposition \ref{esasli} we need to show that
\begin{equation}\label{hadihadi}
 \frac{\beta^{-1}\sqrt{2\pi N}x^{1/\beta^2}}{\an\nu}\sum_{k=1}^\nu \int_{D_k'}\frac{\text{d}b}{(2\pi)^{\nu/2}}e^{-\frac{1}{2}\sum_{i=1}^\nu b_i^2}e^{-G_k(b)\alb \sqrt{N}}\mathbf{1}\{G_k(b)>\frac{\log x}{\an\beta\sqrt{N}}\}
\end{equation}
is asymptotically equal to $K=2\beta^{-2}p$ for $x$ in a compact subset of $(0,\infty)$.

On $D_k'$ we do the change of variables $a_1=\an\beta\sqrt{N}G_k(b)$ and $a_i=b_i$ for $i\geq 2$, and hence,
\begin{equation}
b_1=\frac{a_1-\an\beta\sqrt{p}(a_2+\cdots-a_\nu)}{\an\beta\sqrt{N}\Gamma_1}.
\end{equation}
Denote by $D_k''$ the image of $D_k'$ under this change variables. We get a factor $1/{\an\beta\sqrt{N}}\Gamma_1$ from the Jacobian and we have
\begin{align}
\label{anane}(\ref{hadihadi})&=\frac{\beta^{-2} x^{1/\beta^2}\sqrt{2\pi}}{\antk \nu\Gamma_1}\sum_{k=1}^{\nu}\int_{D_k''} \frac{\text{d}a}{(2\pi)^{\nu/2}}e^{-\frac{1}{2}\sum_{i=2}^\nu a_i^2} e^{-b_1^2/2}e^{-a_1/\beta^2} \mathbf{1}\{a_1 > \log x\}\\\nonumber &=\frac{\beta^{-2} x^{1/\beta^2}\sqrt{2\pi}}{\antk \nu}\sum_{k=1}^{\nu}\int_{ D_k''} \frac{\text{d}a}{(2\pi)^{\nu/2}}e^{-\frac{1}{2}\sum_{i=2}^\nu a_i^2}\exp(-\frac{a_1}{\beta^2}-\frac{a_1^2}{2\Gamma_1^2\antk\beta^2 N}) \times\\\nonumber & \qquad \qquad \qquad \qquad\qquad\qquad\mathbf{1}\{a_1 > \log x\}\exp(-\frac{b_1^2}{2}+\frac{a_1^2}{2\Gamma_1^2\antk\beta^2 N})
\end{align}
The last exponential term
\begin{align*}
 -\frac{b_1^2}{2}+\frac{a_1^2}{2\Gamma_1^2\antk\beta^2 N}&=\frac{1}{2\Gamma_1^2\antk \beta^2 N }\times\\ &\left\{2\an\beta\sqrt{p}a_1(a_2+\cdots-a_\nu)-\antk\beta^2 p (a_2+\cdots-a_\nu)^2\right\}\underset{N\to\infty}\longrightarrow 0
\end{align*}
uniformly for all $|a_1|\leq \an N^{\frac{1+\delta}{2}}$ and $|a_2+\cdots-a_\nu|\leq \nu^{\frac{1+\delta}{2}}$ for $\delta>0$ small enough since $\nu\ll N$. The integration (\ref{anane}) 
over the rest of the domain can be bounded by $e^{-N^{\delta'}}$ for some $\delta'>0$ small enough, uniformly in $x$ for $x$ in a compact subset of $(0,\infty)$. Thus, up to an exponentially small error (\ref{anane}) is equal to
\begin{equation}\label{kalfa}
\frac{\beta^{-2} x^{1/\beta^2}}{\antk \vv\Gamma_1}\sum_{k=1}^{\nu}\int_{D_k''} \frac{\text{d}a}{(2\pi)^{(\nu-1)/2}}e^{-\frac{1}{2}\sum_{i=2}^\nu a_i^2} \exp(-\frac{a_1}{\beta^2}-\frac{a_1^2}{2\Gamma_1^2\antk\beta^2 N}) \mathbf{1}\{a_1 > \log x\}.
\end{equation}
Note that $D_k''$ does not depend on the first coordinate and can be written as $D_k''=\ger \times \bar{D}_k''$ where $\bar{D}_k''$ is the projection of $D_k''$ to the last $\nu-1$ coordinates. Hence, (\ref{kalfa}) is equal to
\begin{equation}
 \frac{\beta^{-2} x^{1/\beta^2}}{\antk \nu\Gamma_1} \left(\int_{\log x}^\infty {\text{d}a_1}\exp(-\frac{a_1}{\beta^2}-\frac{a_1^2}{2\Gamma_1^2\antk\beta^2 N})\right)\left(\sum_{k=1}^{\nu}\int_{\bar{D}_k''}\frac{\text{d}\bar{a}e^{-\frac{1}{2}\sum_{i=2}^\nu a_i^2}}{(2\pi)^{(\nu-1)/2}}\right)
\end{equation}
where $\bar{a}$ is the projection of $a$ onto the last $\vv-1$ coordinates.
Since $\antk N$ diverges with $N$, it is easy to see that the first integral converges to $\beta^2/ x^{1/\beta^2}$ as $N$ diverges uniformly in $x$ for $x$ in a compact subset of $(0,\infty)$. Also, observe that the second integral does not depend on $x$.
Finally, $\Gamma_1\to1$ as $N\to\infty$. Hence, to finish the proof of Proposition \ref{esasli} we need to show that 
\begin{equation}\label{emel}
 \frac{1}{\antk\vv}\sum_{k=1}^{\vv}\int_{\bar{D}''_k}\frac{\text{d}\bar{a}}{(2\pi)^{\frac{\vv-1}{2}}} e^{-\frac{1}{2}\sum_{i=2}^{\vv}{a_i^2}} \overset{N \to \infty} \longrightarrow K
\end{equation}
where $\bar{D}_k''=\{(a_2,\dots,a_{\nu})\in\ger^{\nu-1}:\;\sum_{j=i+1}^k a_j> -|i-k|\an\beta^{-1}\sqrt{p}\;\forall i\not= k \}$ and $K=2\beta^{-2} p$.
We use the fact that the integral in (\ref{emel}) can be related to random walk with drift. More precisely, define $V_N(0)=0$ and
\begin{equation}
V_N(k)=\sum_{i=1}^k (Z_i+\alb\sqrt{p}),\hspace{0.3in}k\in\mathbb{N},
\end{equation}
where $(Z_i,\; i\in\mathbb{N})$ is an i.i.d. sequence of standard normal random variables. In other words, $V_N$ is the random walk whose increments are i.i.d. normal random variables with mean $\an\beta^{-1}\sqrt{p}$ and variance 1.
For $k\geq 1$ define the events
\begin{equation}
 \{\tau_N=k\}:=\{V_1>0,\dots,V_{k-1}>0,V_k<0\}.
\end{equation}
$\{\tau_N=k\}$ is the event that the random walk $V_N$ goes below 0 first time in the $k$th step.
Using the definition of $\sum_{j=i+1}^k a_j$ and $\bar{D}_k''$ we have
\newcommand{\pron}{\mathbb{P}_N}
\begin{equation}
 \frac{1}{\antk\vv}\sum_{k=1}^{\vv}\int_{\bar{D}''_k}\frac{\text{d}\bar{a}}{(2\pi)^{\frac{\vv-1}{2}}} e^{-\frac{1}{2}\sum_{i=2}^{\vv}{a_i^2}}=\frac{1}{\antk \nu}\sum_{k=1}^{\nu}\pro(\tau_N\geq k)\pro(\tau_N \geq \nu-k).
\end{equation}
\newcommand{\taunt}{\hat{\tau}_N}
\newcommand{\taun}{\tau_N}

We need the following technical lemma.
\begin{lemma}\label{ilkfirs} Under the conditions of Proposition \ref{esasli}, there exist positive constants $K_1$ and $K_2$ s.t. as $N\to\infty$

\begin{align}
\label{iddabir}&{\bf (i)}\;\;\;\;\pro(\taun=\infty)\times \an^{-1} \longrightarrow K_1,
\text{ where } K_1=\beta^{-1}\sqrt{2p}.\\\label{ikifirs}
&{\bf (ii)}\;\;\; \mathbb{E}[\taun\;,\; \taun<\infty] \times \an \longrightarrow K_2.
\end{align}
\end{lemma}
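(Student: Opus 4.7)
The process $V_N$ is a Gaussian random walk with unit variance and mean $\mu_N := \alpha_N\beta^{-1}\sqrt p$ per step; since $\mu_N \to 0$ as $N\to\infty$, both parts concern classical functionals of a random walk with vanishing positive drift. My strategy is to combine Wiener--Hopf / Wald identities with the explicit Gaussian structure.

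For (i), introduce the first strict ascending ladder epoch $\tau_+^{(N)} := \inf\{k \geq 1 : V_N(k) > 0\}$, which is a.s.\ finite with finite mean because $\mu_N > 0$. Differentiating the Wiener--Hopf factorization of $1-z\varphi(\theta)$ at $z = 1$, $\theta=0$ yields the duality
\[
\mathbb{P}(\tau_N = \infty) \cdot \mathbb{E}[\tau_+^{(N)}] = 1,
\]
while Wald's identity applied to the stopping time $\tau_+^{(N)}$ gives $\mathbb{E}[V_N(\tau_+^{(N)})] = \mu_N \mathbb{E}[\tau_+^{(N)}]$. Combining,
\[
\mathbb{P}(\tau_N = \infty) = \frac{\mu_N}{\mathbb{E}[V_N(\tau_+^{(N)})]}.
\]
Coupling $V_N$ with the driftless Gaussian walk $V^{(0)}$ through shared Gaussian innovations gives $\tau_+^{(N)} \to \tau_+^{(0)}$ and $V_N(\tau_+^{(N)}) \to V^{(0)}(\tau_+^{(0)})$ in probability as $\mu_N \to 0$, and uniform integrability from Gaussian tail bounds upgrades this to convergence of means. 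The expected ascending ladder height of the unit-variance driftless Gaussian random walk equals $1/\sqrt 2$ by the classical Spitzer identity. Substituting, $\mathbb{P}(\tau_N = \infty)/\alpha_N \to \sqrt 2 \cdot \beta^{-1}\sqrt p = \beta^{-1}\sqrt{2p} = K_1$.

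For (ii), I use the tail-sum formula $\mathbb{E}[\tau_N; \tau_N < \infty] = \sum_{n \geq 0}\mathbb{P}(\tau_N > n,\, \tau_N < \infty)$ and split the sum at the natural crossover $n_\ast \asymp \mu_N^{-2}$, which is the scale on which the drift $n\mu_N$ becomes comparable to the typical diffusive fluctuation $\sqrt n$. In the diffusive regime $n \ll n_\ast$, the walk is essentially driftless, and Sparre Andersen's theorem combined with a coupling with $V^{(0)}$ gives $\mathbb{P}(\tau_N > n, \tau_N < \infty) = c_0 n^{-1/2} + o(n^{-1/2})$ for an explicit Gaussian constant $c_0>0$. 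In the ballistic regime $n \gg n_\ast$, the positive drift forces Cram\'er-type exponential decay, so that contribution is negligible. Summing the diffusive part yields $\sum_{n\le n_\ast} c_0/\sqrt n \sim 2c_0\sqrt{n_\ast} \asymp \mu_N^{-1}$, and hence $\mathbb{E}[\tau_N; \tau_N < \infty]\cdot \alpha_N \to K_2$ for some positive constant $K_2$.

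The principal obstacle lies in making these asymptotics uniform in $\mu_N$. For (i), one must upgrade the weak convergence of $V_N(\tau_+^{(N)})$ to $L^1$ convergence via a uniform tail bound on the ascending ladder height; this is straightforward but must be verified because the stopping time $\tau_+^{(N)}$ itself degenerates as $\mu_N \to 0$. For (ii), the harder point is the crossover window around $n\asymp\mu_N^{-2}$, where neither the Sparre Andersen approximation nor the Cram\'er exponential bound is valid in a clean form; one has to patch the two regimes together via a quantitative local CLT, or via explicit coupling with the driftless Gaussian walk with uniform error control in $\mu_N$.
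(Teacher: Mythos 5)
Your part (i) takes a genuinely different route from the paper. The paper invokes the Sparre--Andersen theorem to obtain the generating function $\hat\tau_N(s)=1-\exp\bigl(-\sum_k\tfrac{s^k}{k}P(Z>d_N\sqrt{k})\bigr)$, reads off $\mathbb{P}(\tau_N=\infty)=\exp\bigl(-\sum_k\tfrac{1}{k}P(Z>d_N\sqrt{k})\bigr)$, and then evaluates the series directly using $\varphi(u)-\log\lfloor u\rfloor\to\gamma$ together with the digamma computation $\mathbb{E}\log|Z|=-\tfrac12\log2-\tfrac{\gamma}{2}$. You replace this computation by the ladder-variable calculus: duality gives $\mathbb{P}(\tau_N=\infty)=1/\mathbb{E}[\tau_+^{(N)}]$, Wald gives $\mathbb{E}[\tau_+^{(N)}]=\mathbb{E}[V_N(\tau_+^{(N)})]/\mu_N$, and the known Gaussian ladder-height mean $1/\sqrt2$ then supplies the constant. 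This is correct in outline, but the uniform-integrability step is \emph{not} straightforward, and there is a circularity trap to avoid: Wald gives $\mathbb{E}[V_N(\tau_+^{(N)})]=\mu_N/\mathbb{P}(\tau_N=\infty)$, so you cannot use Wald again to control the ladder height, and the naive tail bound $\mathbb{P}(V_N(\tau_+)>M)\le\mathbb{E}[\tau_+]\,\mathbb{P}(X>M)$ is useless because $\mathbb{E}[\tau_+]\to\infty$. What does work is to condition on $-a=V_N(\tau_+-1)$ (the undershoot level just before crossing): the overshoot is then a Gaussian conditioned to exceed $a$, and the estimate $\mathbb{E}[(X-a)\mathbf{1}\{X>a+M\}]\le Ce^{-M^2/2}\,\mathbb{P}(X>a)$ holds uniformly over $a\ge0$ and over the drift, so summing over the ladder-epoch events gives $\mathbb{E}\bigl[V_N(\tau_+);V_N(\tau_+)>M\bigr]\le Ce^{-M^2/2}$ uniformly in $N$. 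You should supply this argument rather than assert UI; with it, your part (i) is a valid and somewhat more conceptual alternative, though it does not appreciably shorten the proof.

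Part (ii) has a genuine gap that you flag but do not close. The crossover window $n\asymp\mu_N^{-2}$ contributes at the \emph{same} order $\mu_N^{-1}$ as your diffusive sum $\sum_{n\le n_*}c_0 n^{-1/2}$, and the Sparre--Andersen approximation $\mathbb{P}(\tau_N>n,\tau_N<\infty)\sim c_0 n^{-1/2}$ is only valid for $n\ll\mu_N^{-2}$; thus the asymptotic $\sim 2c_0\sqrt{n_*}$ identifies the order $\asymp\mu_N^{-1}$ but not the limit, and ``$\to K_2$'' does not follow without a uniform scaling limit $\mathbb{P}(\tau_N>\lfloor x\mu_N^{-2}\rfloor,\tau_N<\infty)/\mu_N\to g(x)$ with $g$ integrable --- a nontrivial functional limit theorem you do not prove. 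The paper avoids all of this by differentiating the generating function at $s=1$: $\mathbb{E}[\tau_N;\tau_N<\infty]=\hat\tau_N'(1)=\mathbb{P}(\tau_N=\infty)\sum_k P(Z>d_N\sqrt{k})$, so it suffices to show $\alpha_N^2\sum_k P(Z>d_N\sqrt{k})\to C$, which is an elementary integral comparison. If you wish to keep your approach, you must produce the scaling profile through the crossover; otherwise the generating-function route is markedly simpler.
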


\newcommand{\dnn}{d_N}
\newcommand{\patb}{-\sum_{k=1}^\infty \frac{s^k}{k}P(Z>\dnn \sqrt{k})}
\newcommand{\patbir}{\sum_{k=1}^\infty \frac{1}{k}P(Z>\dnn \sqrt{k})}
\newcommand{\pati}{\sum_{k=1}^\infty s^{k-1}P(Z>\dnn \sqrt{k})}
\newcommand{\patiki}{\sum_{k=1}^\infty P(Z>\dnn \sqrt{k})}
\newcommand{\pnk}{P(Z>\dnn \sqrt{k})}
\newcommand{\pnx}{P(Z>\dnn \sqrt{x})}
\newcommand{\pny}{P(Z>\dnn y)}

Let $\taunt(s)$ be the usual moment generating function of $\tau_N$, i.e.
\begin{align*}
 &\taunt(s)=\sum_{k=1}^{\infty} \pro(\taun=k) s^k.
\end{align*}
We define
\begin{equation}\label{kisa}
 d_N:=\alb\sqrt{p}.
\end{equation}
Due to a theorem by S. Andersen (Theorem 1, on page 413 of \cite{fel2}) we have
\newcommand{\pnz}{\pro(Z > \sqrt{p}\an\beta\sqrt{k})}
\begin{align}
 \nonumber\taunt(s)&=1-\exp\left(-\sum_{k=1}^{\infty}\frac{s^k}{k}\pro(V_N(k)<0)\right)\\ \nonumber &=1-\exp\left(-\sum_{k=1}^\infty\frac{s^k}{k}\pro(\sum_{i=1}^k Z_i < -k \alb\sqrt{p} )\right)\\ \label{asti}&=1-\exp\left(-\sum_{k=1}^\infty\frac{s^k}{k}P(Z > d_N\sqrt{k})\right)
\end{align}
where $Z$ is a standard normal random variable and $P$ is its probability distribution.
\begin{proof}[Proof of Lemma \ref{ilkfirs} part (i)]
Observe that,
\begin{equation}\label{hadibe}
 \pro(\taun=\infty )=1-\taunt(1)=\exp(-\sum_{k=1}^{\infty}\frac{1}{k}P(Z>d_N\sqrt{k}))
\end{equation}

Let us define random variables $Y_N:=Z^2/\dnn^{2}$. Note that  
\qw\label{korsan}
Y_N\overset{N\to\infty}\longrightarrow\infty\text{ a.s.}
\qwe
We have
\qw
\sum_{k=1}^{\infty}\frac{1}{k}P(Z>d_N\sqrt{k})=\frac{1}{2}\sum_{k=1}^{\infty}\frac{P(Z^2/\dnn^2\geq k)}{k}=\frac{1}{2}\sum_{k=1}^{\infty}\frac{P(Y_N\geq k)}{k}=\frac{1}{2}E\sum_{k=1}^{\infty}\frac{\mathbf{1}\{Y_N\geq k\}}{k}
\qwe
Let us define $\varphi(u)=\sum_{k=1}^{\lfloor u \rfloor}1/k$. Then
\qw
\sum_{k=1}^{\infty}\frac{1}{k}P(Z>d_N\sqrt{k})=\frac{1}{2}E\sum_{k=1}^{\infty}\frac{\mathbf{1}\{Y_N\geq k\}}{k}=\frac{1}{2}E(\phi(Y_N)\mathbf{1}\{Y_N\geq 1\})
\qwe
\newcommand{\wun}{\mathbf{1}\{Y_N\geq 1\}}
It is a well-known fact that
\qw
\varphi(u)-\log\lfloor u \rfloor \overset{u\to\infty}\longrightarrow \gamma
\qwe
where $\gamma$ is the Euler constant. Using (\ref{korsan}) we have a.s.
\qw
 \varphi(Y_N)-\log\lfloor Y_N \rfloor \overset{N\to\infty}\longrightarrow \gamma.
\qwe 
Using the bound
\qw
0\leq \varphi(u)-\log \lfloor u \rfloor\leq 1
\qwe
and (\ref{korsan}) we can conclude by the dominated convergence theorem that
\qw
E(\varphi(Y_N)\mathbf{1}\{Y_N\geq 1\})-E(\log\lfloor Y_N \rfloor \mathbf{1}\{Y_N\geq 1\})\overset{N\to\infty}\longrightarrow \gamma.
\qwe
It is easy to see that
\qw
E(\log\lfloor Y_N \rfloor \mathbf{1}\{Y_N\geq 1\})-E(\log Y_N \mathbf{1}\{Y_N\geq 1\})\overset{N\to\infty}\longrightarrow 0.
\qwe
It is clear by the definition of $Y_N$ that
\qw
E(\log Y_N \mathbf{1}\{Y_N\geq 1\})-2\log(\dnn^{-1})\overset{N\to\infty}\longrightarrow 2E(\log |Z|).
\qwe
Hence, we can conclude that
\qw
P(\taun=\infty)\times \dnn^{-1}\overset{N\to\infty}\longrightarrow \exp(-E(\log|Z|)-\gamma/2).
\qwe
and subsequently 
\qw
P(\taun=\infty)\times \an^{-1}\overset{N\to\infty}\longrightarrow \beta^{-1}\sqrt{p}\exp(-E(\log|Z|)-\gamma/2).
\qwe
This proves part (i) of Lemma \ref{ilkfirs} with $K_1=\beta^{-1}\sqrt{p}\exp(-E(\log|Z|)-\gamma/2)$. 

Now we calculate $K_1$. For $\alpha>0$ we define
\qw
V(\alpha)=\int_0^\infty e^{-x^2}x^{\alpha-1}dx.
\qwe
It is easy to see that
\qw
V'(\alpha)=\int_{0}^\infty e^{-x^2}(\log x )x^{\alpha-1}dx.
\qwe
We have
\qw\label{kediko}
V'(1)=\int_0^{\infty}e^{-x^2}\log x dx
\qwe
After the change variables $u=x^2$, $V(\alpha)$ is same as
\begin{equation}
 V(\alpha)=\frac{1}{2}\int_{0}^{\infty}e^{-u}u^{\alpha/2-1}du=\frac{1}{2}\Gamma(\alpha/2).
\end{equation}
Thus,
\begin{align}\label{paulhand}
 V'(1)=\frac{1}{4}\Gamma'(1/2)=\frac{1}{4}\Gamma(1/2)(\log \Gamma)'(1/2).
\end{align}
It is a well-known result that
\begin{equation}
 \psi(x)=\frac{\Gamma'(x)}{\Gamma(x)},
\end{equation}
where $\psi$ is the digamma function. Using the formula of $\psi$ for half-integer values we have
\begin{equation}
 \psi(1/2)=-\gamma-2\log 2,
\end{equation}
where $\gamma$ is the Euler constant. Using (\ref{paulhand}) and the fact that $\Gamma(1/2)=\sqrt{\pi}$ we get
\begin{equation}
 V'(1)=-\frac{\sqrt{\pi}}{4}(\gamma+2\log 2).
\end{equation}
Hence, by (\ref{kediko}) and the above equality, we can conclude by a change of variables that
\qw
E(\log |Z|)=-\frac{\log 2}{2}-\frac{\gamma}{2}.
\qwe
Hence, we have $K_1=\beta^{-1}\sqrt{2p}$
\end{proof}

\begin{proof}[Proof of part (ii) of Lemma \ref{ilkfirs}]
Using the moment generating function $\taunt$ we have
\begin{align*}
&E[\taun,\;\taun<\infty]=\frac{\partial \taunt(s)}{\partial s}|_{s=1}=\exp\left(-\sum_{k=1}^\infty\frac{1}{k}P(Z > d_N\sqrt{k})\right)\left\{\sum_{k=1}^{\infty}P(Z>\dnn \sqrt{k})\right\}
\end{align*}
We know by part (i) of the Lemma \ref{ilkfirs} that the exponential term above is asymptotically equivalent to $K_1\an$. Hence, to finish the proof it is enough to prove that
\qw
\frac{1}{\an^{-2}}\sum_{k=1}^{\infty}P(Z>\dnn \sqrt{k})\overset{N\to\infty}\longrightarrow C
\qwe
for some constant $C>0$. Since $P(Z\geq \dnn\sqrt{k})$ is decreasing in $k$ we have the bounds
\qw
\int_{1}^{\infty}P(Z\geq \dnn\sqrt{x})dx\leq \sum_{k=1}^{\infty}P(Z>\dnn \sqrt{k}) \leq \int_{1}^{\infty}P(Z\geq \dnn\sqrt{x})dx +1.
\qwe 
Hence, it is enough to prove that
\qw\label{burda}
\frac{1}{\an^{-2}}\int_{1}^{\infty}P(Z\geq \dnn\sqrt{x})dx \overset{N\to\infty}\longrightarrow C
\qwe
for some constant $C>0$. By substitution $\sqrt{x}=y$ we have
\qw
\int_{1}^{\infty}P(Z\geq \dnn\sqrt{x})dx=\int_{1}^{\infty}P(Z\geq \dnn y)2ydy=\int_1^\infty \int_{\dnn y}^{\infty}\frac{e^{-z^2/2}}{\sqrt{2\pi}}dz2ydy
\qwe
By switching the order of integration the last term above is equal to
\qw
\int_{\dnn}^\infty \int_1^{z/\dnn}\frac{e^{-z^2/2}}{\sqrt{2\pi}}2ydydz=\int_{\dnn}^\infty \frac{e^{-z^2/2}}{\sqrt{2\pi}}(z^2/\dnn^2-1)dz.
\qwe
This finishes the proof of (\ref{burda}), and consequently the proof of part (ii). 
\end{proof}

We begin the proof of (\ref{emel}) by rewriting $\frac{1}{\nu\antk}\sum_{k=1}^{\nu}\pro(\taun\geq k)\pro(\taun\geq \nu-k)$. Using $\pro(\taun\geq k)=\pro(k\leq \taun < \infty)+\pro(\taun=\infty)$ this expression can be written as
\begin{align}
\label{mother}\frac{\pro(\taun=\infty)^2}{\antk}+\frac{\pro(\taun=\infty)}{\nu\antk}&\sum_{k=1}^{\nu}\left(
\pro(k\leq\taun<\infty)+\pro(\nu-k\leq\taun<\infty)\right)\\\nonumber+&\frac{1}{\nu\antk}\sum_{k=1}^{\nu}\pro(k\leq\taun<\infty)\pro(\nu-k\leq\taun<\infty).
\end{align}

By part (i) of Lemma \ref{ilkfirs} we know that the first term in (\ref{mother}) converges to $K:=K_1^2=2\beta^{-2}p$.

By part(ii) of Lemma \ref{ilkfirs} we have for some positive constant $C$
\qw
\sum_{k=1}^{\nu}\left(
\pro(k\leq\taun<\infty)+\pro(\nu-k\leq\taun<\infty)\right)\leq 2 \ort[\taun,\;\taun<\infty]\leq \frac{C}{\an}
\qwe
for all $N$ large enough. Using once again part (i) of Lemma \ref{ilkfirs}, the second term in (\ref{mother}) is bounded above $\frac{C}{\nu\antk}$. However, since $\nu\antk\overset{N\to\infty}\longrightarrow \infty$ this term converges to 0 as $N$ diverges.

We partition the sum in the second term into two: $k=1,\dots,\lfloor \nu/2 \rfloor$ and $k=\lceil \nu/2 \rceil,\dots,\nu$. We have
\qw\label{durak}
\frac{1}{\nu\antk}\sum_{k=1}^{\lfloor \nu/2 \rfloor}\pro(k\leq\taun<\infty)\pro(\nu-k\leq\taun<\infty)\leq \frac{1}{\nu\antk} E[\taun;\;\taun<\infty] P(\nu/2\leq \taun<\infty).
\qwe
By Cheybshev Inequality and part (ii) of Lemma \ref{ilkfirs} we have
\qw
P(\nu/2\leq \taun<\infty)\leq \frac{2E[\taun;\;\taun<\infty]}{\nu}\leq \frac{C}{\nu\an}
\qwe
for $N$ large enough. Hence, (\ref{durak}) is bounded above by $C/\nu^2\an^4$ which converges to $0$ with $N$. The estimate of the second partition can be done similarly. Thus, we get
\qw
\frac{1}{\nu\antk}\sum_{k=1}^{\nu}\pro(\taun\geq k)\pro(\taun\geq \nu-k)\overset{N\to\infty}\longrightarrow 2\beta^{-2}p.
\qwe
This finishes the proof of (\ref{emel}) and hence, the proof of Proposition \ref{esasli}.

\end{proof}
\newcommand{\alb}{\frac{\alpha_N}{\beta}}
\begin{proof}[Proof of Proposition \ref{pasa}]
Using the method introduced at the beginning of the proof of Proposition \ref{esasli} and the terminology within, $\frac{\rn}{\nu}\mathbb{E}[1-\exp(-\rho\antk\kac)]$ is equal to
\qw\label{kopar}
\frac{\rn}{\nu}\int_{\mathbb{R}^{\nu}}\frac{\text{d}z e^{-\frac{1}{2}\sum_{i=1}^{\nu}z_i^2}}{(2\pi)^{\nu/2}}\left\{1-\exp(-\rho\antk\sum_{i=1}^{\nu}\mathbf{1}\{G_i(z)\geq C_N(x)\})\right\}.
\qwe
Let $D_k=\{z:G_k(z)>G_i(z)\;\forall i\not=k\}$. On $D_k$, we do the change of variables
\begin{equation}
 \begin{array}{cc} z_i=b_i+\Gamma_i \alb\sqrt{N}&\text{if }i\leq k,\\ z_i=b_i-\Gamma_i \alb\sqrt{N}&\text{if }i>k.\end{array}
\end{equation}
Then (\ref{kopar}) becomes
\begin{align}
 \label{hadicanim}&\frac{\beta^{-1}\sqrt{2\pi N}}{\an\nu x}\sum_{k=1}^{\nu}\int_{D_k'}\frac{\text{d}be^{-\frac{1}{2}\sum_{i=1}^\nu b_i^2}}{(2\pi)^{\nu/2}}\exp(-G_k(b)\alb \sqrt{N})\times\\&\nonumber\left\{1-\exp\left(-\rho\antk\sum_{i=1}^{\nu}\mathbf{1}\left\{G_k(b)\geq 2\sqrt{\frac{p}{N}}\left[\sum_{i+1}^kb_j+|i-k|\alb\sqrt{p}+\frac{\log x}{2\an\beta\sqrt{p}}\right]\right\}\right)\right\},
\end{align}
where
\begin{equation}
 D_k'=\{b:\;\sum_{i+1}^kb_j>-|i-k|\alb\sqrt{p}\;\forall i\not=k\}.
\end{equation}
On $D_k'$ we do the change of variables $a_1=\an\beta\sqrt{N}G_k(b)$ and $a_i=b_i$ for $i\geq 2$.
Denote by $D_k''$ the image of $D_k'$ under this change variables.
Since the curly bracket term above is always less than 1, by the exact same way in the proof of Proposition \ref{esasli} ((\ref{anane}) and the paragraph following it), up to an exponentially small error the above integral is equal to
\begin{align}
 \label{gayret}&\frac{\beta^{-2}}{\antk \nu }\sum_{k=1}^{\nu}\int_{D_k''}\frac{\text{d}a}{(2\pi)^{(\nu-1)/2}}e^{-\frac{1}{2}\sum_{i=2}^\nu a_i^2} \exp(-\frac{a_1}{\beta^2}-\frac{a_1^2}{2\Gamma_1^2\antk\beta^2 N})\times\\&
\nonumber\left\{1-\exp\left(-\rho\antk\sum_{i=1}^{\nu}\mathbf{1}\left\{a_1\geq 2\an\beta\sqrt{p}\left[\sum_{i+1}^k a_j+|i-k|\alb\sqrt{p}\right]+\log x\right\}\right)\right\}.
\end{align}
Note that since on $D_k''$ we have $\sum_{j=i+1}^k a_j+|i-k|\alb\sqrt{p}\geq0$ for all $i$, if $a_1<\log x$ the inner curly bracket term above is zero for all $i$. Consequently, if $a_1<\log x$ the integral in (\ref{gayret}) is equal to zero. Using
this and the fact that $D_k''$ does not depend on the first coordinate we can restrict the domain of integration of the above integral to $[\log x,\infty]\times \bar{D}_k''$ where $\bar{D}_k''$ is the projection of $D_k''$ to the last $\nu-1$ coordinates.
If we do the change variables $a_1-\log x= y$, up to a small error that vanishes as $N$ goes to infinity uniformly in $x$ on compact subsets of $(0,\infty)$, (\ref{gayret}) is equal to
\begin{align}
 \label{teyze}&\frac{\beta^{-2}}{\antk \nu x^{1/\beta^2}\Gamma_1}\sum_{k=1}^{\nu}\int_{0}^\infty\text{d}y\int_{\bar{D}_k''}\frac{\text{d}\bar{a}}{(2\pi)^{(\nu-1)/2}}e^{-\frac{1}{2}\sum_{i=2}^\nu a_i^2} \exp(-\frac{y}{\beta^2}-\frac{y^2}{2\Gamma_1^2\antk\beta^2 N})\times\\&
\nonumber\left\{1-\exp\left(-\rho\antk\sum_{i=1}^{\nu}\mathbf{1}\left\{y\geq 2\an\beta\sqrt{p}\left[\sum_{i+1}^k a_j+|i-k|\alb\sqrt{p}\right]\right\}\right)\right\},
\end{align}
where $\bar{a}$ is the projection of $a$ to the last $\nu-1$ coordinates. Now we work on
\qw\label{comeon}
 \int_{\bar{D}_k''}\frac{\text{d}\bar{a}e^{-\frac{1}{2}\sum_{i=2}^\nu a_i^2}}{(2\pi)^{(\nu-1)/2}}\left\{1-\exp\left(-\rho\antk\sum_{i=1}^{\nu}\mathbf{1}\left\{y\geq 2\an\beta\sqrt{p}\left[\sum_{i+1}^k a_j+|i-k|\alb\sqrt{p}\right]\right\}\right)\right\}.
\qwe
Let $W=(W_2,\dots,W_{\nu})$ be a sequence of i.i.d. standard normal random variables. Then, (\ref{comeon}) is equal to
\qw\label{babane}
 \mathbb{P}(W\in\bar{D}_k'')\mathbb{E}[1-\exp\left(-\rho\antk\sum_{i=1}^{\nu}\mathbf{1}\left\{y\geq 2\an\beta\sqrt{p}\left[\sum_{i+1}^k W_j+|i-k|\alb\sqrt{p}\right]\right\}\right)| W\in\bar{D}_k''].
\qwe
Note that the expectation in (\ref{babane}) is always between 0 and 1. Since on $\bar{D}_k''$ we have $\sum_{i+1}^k a_j+|i-k|\an\beta^{-1}\sqrt{p}>0$ it follows that when $y\sim 0$ the argument of the exponential in the expectation in (\ref{babane}) is close to zero. In other words, as $y\to 0^+$ we have
\begin{align}
\nonumber &\mathbb{E}[1-\exp\left(-\rho\antk\sum_{i=1}^{\nu}\mathbf{1}\left\{y\geq 2\an\beta\sqrt{p}\left[\sum_{i+1}^k W_j+|i-k|\alb\sqrt{p}\right]\right\}\right)| W\in\bar{D}_k'']\\\label{katki}&\sim \rho\antk\sum_{i=1}^{\nu}\mathbb{E}[\mathbf{1}\left\{y\geq 2\an\beta\sqrt{p}\left[\sum_{i+1}^k W_j+|i-k|\alb\sqrt{p}\right]\right\}| W\in\bar{D}_k'']\\\nonumber&\sim
\rho\antk\sum_{i=1}^{\nu}P(y\geq 2\an\beta\sqrt{p}(R_{k-i}+|i-k|\alb\sqrt{p})|R_{k-i}>-\an\beta^{-1}\sqrt{p}|i-k|),
\end{align}
where $R_{k-i}$ is a centered normal random variable with variance $|k-i|$. The probability term on the last display is equal to
\begin{align*}
 &P\left(Z\leq \frac{y}{2\an\beta\sqrt{p}\sqrt{|k-i|}}-\alb\sqrt{p}\sqrt{|i-k|}\left|Z>-\an\beta^{-1}\sqrt{p}\sqrt{|i-k|}\right.\right),
\end{align*}
where $Z$ is a standard normal random variable. Note that the above term converges to 0 at least exponentially if $\an\sqrt{|k-i|}\gg 1$. Hence, the contribution from such $i$ to the sum in (\ref{katki}) is negligible. If $\an\sqrt{|k-i|}\ll 1$ the above term converges to 1. The number of such $i$'s is $o(1/\antk)$. Hence, the contribution from these $i$'s to the sum in (\ref{katki}) is also negligible. Finally, if $\an\sqrt{|k-i|}=c$ the above term is equal to
\begin{equation}
 \frac{P(-c'\leq Z\leq \frac{y}{2c'}-c')}{P(-c'\leq Z)}\overset{y\to0+}\sim c'' y.
\end{equation}
Hence, for some positive constants $c_1,c_2$ independent of $k$ we have for all $y>0$
\begin{equation}\label{jumbo}
 (1\wedge \rho c_1 y)P(W\in\bar{D}_k'') \leq (\ref{comeon}) \leq (1\wedge \rho c_2 y)P(W\in\bar{D}_k'') .
\end{equation}
Hence, the integral in (\ref{teyze}) is bounded below and above by
\begin{equation}\label{sukur}
 \frac{1}{x^{1/\beta^2}}\left\{\frac{1}{\antk\nu}\sum_{k=1}^{\nu}P(W\in\bar{D}_k'')\right\}\left\{\frac{1}{\beta^2}\int_{0}^{\infty}(1\wedge c\rho y)\exp(-\frac{y}{\beta^{2}})\text{d}y\right\},
\end{equation}
with different constants $c$, for $N$ large enough. After a simple change of variables the second curly bracket term above is equal to $\int_0^\infty (1\wedge c\rho y)e^{-y}\text{d}y$ with $c=c/\beta^2$. 
Note that with the notation of the proof of Proposition \ref{esasli}
\begin{equation}
 \frac{1}{\antk\nu}\sum_{k=1}^{\nu}P(W\in\bar{D}_k'')=\frac{1}{\antk \nu}\sum_{k=1}^{\nu}\pro(\tau_N\geq k)\pro(\tau_N\geq \nu-k)\overset{N\to\infty}\longrightarrow K,
\end{equation}
where $K$ is as in the statement of Proposition \ref{esasli}. Thus,  
\begin{equation}
\frac{K}{x^{1/\beta^2}}\int_{0}^{\infty}(1\wedge \rho c_1 y)e^{-y}dy\leq (\ref{teyze}) \leq \frac{K}{x^{1/\beta^2}}\int_{0}^{\infty}(1\wedge \rho c_2 y)e^{-y}dy,
\end{equation}
for some positive constants $c_1$ and $c_2$. This finishes the first part of Proposition \ref{pasa} with
\begin{equation}\label{adana}
 C_i(\rho)=\int_{0}^{\infty}(1\wedge \rho c_i y)e^{-y}dy,\;\;i=1,2.
\end{equation}
Moreover, we have for any $c>0$
\begin{equation}
 C(\rho)=\int_{0}^{\infty}(1\wedge \rho c y)e^{-y}dy=\int_{0}^{\frac{1}{c\rho}}c\rho y e^{-y}dy+\int_{\frac{1}{c\rho}}^{\infty}e^{-y}= C\frac{1}{\rho}+e^{-\frac{1}{c\rho}}\underset{\rho\to\infty}\longrightarrow 1.
\end{equation}
This proves the second claim of Proposition \ref{pasa}.

\end{proof}

\section{Comparison}
\newcommand{\rijs}{\Lambda_{ij}^{0}}
\newcommand{\rijb}{\Lambda_{ij}^{1}}
\newcommand{\rijh}{\Lambda_{ij}^{h}}
\newcommand{\xni}{X_N^0(i)}
\newcommand{\xnni}{X_N^1(i)}
In this section we compare the extremal statistics of the original Gaussian Hamiltonians of the correlated mean field models with the block independent Gaussian processes described in the previous sections. 
Recall that given a realization of the SRW, $Y_N$, the Hamiltonians of the SK and the $p$-spin models are given by a Gaussian processes $\xni=H_N(Y_N(i))$ where $\xni$ is a centered Gaussian process with the covariance structure
\begin{equation}
\Lambda_{ij}^{0}=\mathbb{E}[\xni X_N^0(j)]=\left(1-\frac{2\text{dist}(Y_N(i),Y_N(j))}{N}\right)^p,\hspace{0.3in}p\geq 2.
\end{equation} 
Also recall that by $X_N^1$ we denote the auxiliary Gaussian process that we will use to approximate the extremal statistics of $\xni$. $X_N^1(i),\;i\in\mathbb{N}$ is a Gaussian process with covariance matrix
\begin{equation}
 \Lambda_{ij}^1=\mathbb{E}[\xnni X_N^1(j)]=\left\{\begin{array}{cc}1-2p|i-j|/N &\text{ if } \lfloor i/ \vv\rfloor=\lfloor j/\vv\rfloor, \\ 0 &\text{    otherwise}.
\end{array}\right.
\end{equation}
Recall that $w_{\rho}$ is a random subset of $\mathbb{N}$ where $\mathbb{P}(i\in w_{\rho})=\rho\antk$, i.i.d for $i\in\mathbb{N}$, and $\mathcal{W}$ denotes its $\sigma$-algebra. Finally, recall that the time scales we are considering are of the form $\tn=\exp(\an N)$ where $\an=N^{-c},\;c\in(0,1/2)$.

\begin{proposition}\label{berker}
Fix sequences $\{t_k\}$ and $\{x_k\}$ i.e. $0\leq t_1\leq t_2\leq \dots \leq t_l=t$ and $0< x_1 \leq x_2\leq \dots\leq x_l$,

\vspace{0.1in}
{\bf (i)} For $p\geq 2$ for any $c\in(0,1/2)$, $\mathcal{Y}$ a.s.
\begin{align}
 \lim_{N\to \infty}\big|\mathbb{P}&(\max_{i\leq t_1 \rn,i\in w_{\rho}} {\xni}\leq C_N(x_1),\dots,\max_{i\leq t_l \rn,i\in w_{\rho}}{\xni}\leq C_N(x_l)|\mathcal{Y})-
\\ \nonumber &\mathbb{P}(\max_{i\leq t_1 \rn,i\in w_{\rho}} {\xnni}\leq C_N(x_1),\dots,\max_{i\leq t_l \rn,i\in w_{\rho}}{\xnni}\leq C_N(x_l)\big|=0.
\end{align}

{\bf (ii)} For $p=2,\;p\geq4$ for any $c\in(0,1/2)$, and, for $p=3$ for any $c\in(0,1/4)$, $\mathcal{Y}$ a.s. 
\begin{align}\label{basak}
 \liminf_{N\to \infty}&\left\{\mathbb{P}(\max_{i\leq t_1 \rn} {\xni}\leq C_N(x_1),\dots,\max_{i\leq t_l \rn}{\xni}\leq C_N(x_l)|\mathcal{Y})-\right.\\ \nonumber &\left.\mathbb{P}(\max_{i\leq t_1 \rn} {\xnni}\leq C_N(x_1),\dots,\max_{i\leq t_l \rn}{\xnni}\leq C_N(x_l))\right\}=0.
\end{align}
\end{proposition}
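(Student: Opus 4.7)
The plan is to apply a Slepian-type normal comparison inequality (Berman's inequality) conditionally on $\mathcal{Y}$. First, rewrite the multi-level events as a single orthant probability: setting $u_i=\min\{C_N(x_k):t_k R_N\geq i\}$, the event $\{\max_{i\leq t_k R_N} X^\bullet_i\leq C_N(x_k)\text{ for all }k\}$ becomes $\{X^\bullet_i\leq u_i\text{ for all }i\leq t_l R_N\}$. Berman's inequality then bounds the difference between the two orthant probabilities by
\begin{equation*}
\frac{1}{2\pi}\sum_{i<j}\frac{|\Lambda^0_{ij}-\Lambda^1_{ij}|}{\sqrt{1-\bar\rho_{ij}^2}}\exp\!\left(-\frac{u_i^2+u_j^2}{2(1+\bar\rho_{ij})}\right),
\end{equation*}
where $\bar\rho_{ij}=\max(|\Lambda^0_{ij}|,|\Lambda^1_{ij}|)$. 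Since $u_i\sim \an\sqrt{N}/\beta$, the exponential weight in each term is at most $R_N^{-2/(1+\bar\rho_{ij})}$ up to polynomial factors in $N$. In part (i) one further averages over $\mathcal{W}$ and uses the Bernoulli weights to restrict the sum to pairs both in $w_\rho$, which contributes an extra factor $(\rho\an^2)^2$.

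The main work is to split the double sum by distance regime using the SRW estimates from Section 4. Within a block ($\lfloor i/\nu\rfloor=\lfloor j/\nu\rfloor$), the straight-line property of SRW on scales $\leq \nu$ gives $\mathrm{dist}(Y_N(i),Y_N(j))=|i-j|$ with overwhelming probability, so $\Lambda^0_{ij}=(1-2|i-j|/N)^p=\Lambda^1_{ij}+O((|i-j|/N)^2)$: the per-pair covariance difference is $O(\nu^2/N^2)$, $\bar\rho_{ij}$ is close to $1$, and the exponential weight is $\asymp R_N^{-1}$. The number of intra-block pairs among the first $tR_N$ indices is $\asymp tR_N\nu$, so the intra-block contribution is $O(t\nu^3/N^2)$, which is $o(1)$ in the allowed range of $w$. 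For inter-block pairs $\Lambda^1_{ij}=0$, so the summand equals $|\Lambda^0_{ij}|=|(1-2d_{ij}/N)^p|$; I would stratify by the value of $\bar\rho=|\Lambda^0_{ij}|$, using the non-self-intersection/displacement estimates from Section 4 to bound the number of pairs $(i,j)$ with $\mathrm{dist}(Y_N(i),Y_N(j))$ in each dyadic range, and weigh these counts against the exponential suppression $\exp(-\an^2 N/(\beta^2(1+\bar\rho)))$.

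Part (i) is the easier case: the $(\rho\an^2)^2$ thinning precisely absorbs the $R_N$-size enhancement of the inter-block count, and the comparison closes for all $c\in(0,1/2)$. Part (ii) lacks this thinning, so the inter-block estimate must be sharper. For $p=2$ and $p\geq 4$, either the sign structure of $(1-2d/N)^p$ or its faster decay with $d$ provides enough slack for the argument to close on all of $c\in(0,1/2)$, but for $p=3$ the accounting at intermediate distances is borderline, and one needs the stricter assumption $c<1/4$ (equivalently $\an^2\gg N^{-1/2}$) to push the dominant error term to zero. I expect the most delicate step to be this final $p=3$ bookkeeping, which requires combining the sharp random walk count of pairs at each correlation level with the precise behavior of the exponential weight in Berman's inequality at intermediate $\bar\rho$.
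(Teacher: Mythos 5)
Your proposal follows the paper's top-level strategy: apply the normal-comparison (Berman/Slepian) inequality conditionally on $\mathcal{Y}$ to the orthant probabilities, harvest the factor $\rho^2\alpha_N^4$ in part (i) by averaging over the thinning cloud $w_\rho$, split the double sum into intra- and inter-block pairs, control the inter-block pair counts by stratifying over the SRW distance $D_{ij}$ with the estimates of Section~4, and locate the $p=3$ borderline in part (ii) via the sign/decay of $(1-2d/N)^p$ at $d>N/2$. All of this matches the paper.

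The genuine gap is in the intra-block estimate. First, the claim that the straight-line property gives $\mathrm{dist}(Y_N(i),Y_N(j))=|i-j|$ ``with overwhelming probability'' for all $|i-j|\leq\nu$ is false once $|i-j|\gtrsim\sqrt N$ (and $\nu\gg\sqrt N\alpha_N^{-1}$); the walk backtracks non-negligibly, and the paper instead bounds the \emph{excess} $|i-j|-D_{ij}$ via hitting-time moments of the birth-death chain, yielding the refined estimate of Lemma~\ref{aslanimik}:
\begin{equation*}
\sum_{\substack{i<j\\ \lfloor i/\nu\rfloor=\lfloor j/\nu\rfloor}}^{tr(N)}\mathbf{1}\{D_{ij}=d\}\,\bigl(\Lambda_d^0-\Lambda_{ij}^1\bigr)\;\leq\;C\,r(N)\,\frac{d^2}{N^2}\,\mathbf{1}\{d\leq\nu\}.
\end{equation*}
Second, the Berman exponential weight is not flat at $R_N^{-1}$ across the block: since $\Lambda_d^0\approx 1-2pd/N$ for $d\ll N$, the weight decays like $\exp(-\alpha_N^2\beta^{-2}N/2)\exp(-\mathrm{const}\cdot\alpha_N^2 d)$, so the $d$-sum is effectively localized at $d\asymp\alpha_N^{-2}$. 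Only with both refinements does the intra-block total collapse to $O(\alpha_N^{-2}N^{-1})=O(N^{2c-1})\to 0$, valid across the whole range $c\in(0,1/2)$ and $w\in(1/2+c,1)$. Your worst-case bookkeeping (covariance difference $O(\nu^2/N^2)$ per pair, flat weight $R_N^{-1}$, $O(tR_N\nu)$ pairs) does not tend to zero when $w$ is near $1$ and $c$ is small, so as written the argument would fail to cover part of the parameter range claimed in the proposition. The inter-block outline and the part (ii) diagnosis are essentially correct and align with Lemma~\ref{aslanimik}\eqref{cimbomik} and the odd-$p$ computation in the paper.
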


The result of the first part of Proposition \ref{berker} is that the extremal distributions of $X_N^0$ and $X_N^1$ are comparable on the diluted random subset of indices $w_{\rho}$. The second part is needed needed for to extend this comparison
to the whole set of indices; that's where we need stronger restriction on $\an$ for $p=3$.

To prove Proposition \ref{berker} we use the well-known interpolation estimate for Gaussian processes.
\begin{theorem}\label{lead} (Normal Comparison Lemma, Theorem 4.2.1 on page 81 in \cite{lead})
 Suppose $\xi_1,\dots,\xi_n$ are standard normal variables with covariance matrix $\Lambda^1=(\Lambda_{ij}^1)$ and $\mu_1,\dots,\mu_n$ similarly with covariance matrix $\Lambda^0=(\Lambda_{ij}^0)$ and $u_i\in \mathbb{R}$. Let $\Lambda^h_{ij}:=h\Lambda_{ij}^1+(1-h)\Lambda_{ij}^0$ then
\begin{align}
 \nonumber\mathbb{P}&(\xi_i\leq u_i:\; i=1,\dots,n)-\mathbb{P}(\mu_i\leq u_i:\; i=1,\dots,n)
\\&\leq \frac{1}{2\pi}\sum_{1\leq i<j\leq n}(\Lambda_{ij}^1-\Lambda_{ij}^0)_+ \int_0^1(1-(\Lambda_{ij}^h)^2)^{-1/2}\exp(-\frac{u_i^2+u_j^2-2\Lambda_{ij}^h u_i u_j}{2(1-(\Lambda_{ij}^h)^2)})\text{d}h.
\end{align}
\end{theorem}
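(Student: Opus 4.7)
The plan is to prove the Normal Comparison Lemma by the classical interpolation technique of Slepian and Berman. I would introduce a one-parameter family of centered Gaussian vectors $Z^h = (Z^h_1,\dots,Z^h_n)$ with covariance matrix $\Lambda^h = h\Lambda^1 + (1-h)\Lambda^0$, which is positive semi-definite for each $h\in[0,1]$ as a convex combination of covariances. Let $\phi_h$ be the joint density (for now assuming non-degeneracy; I will remove this at the end by a small regularization $\Lambda^h+\varepsilon I$ and dominated convergence). Set $F(h) = \mathbb{P}(Z^h_i \leq u_i \text{ for all } i)$. Then $F(0) = \mathbb{P}(\mu_i\leq u_i \,\forall i)$ and $F(1) = \mathbb{P}(\xi_i\leq u_i\,\forall i)$, so the whole task is to compute $F(1)-F(0) = \int_0^1 F'(h)\,dh$ and bound it from above.

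The key tool is \emph{Plackett's identity}: since both $\Lambda^0$ and $\Lambda^1$ have unit diagonal, the diagonal of $\Lambda^1-\Lambda^0$ vanishes, and $\phi_h$ satisfies the heat-type equation
\begin{equation*}
\frac{\partial \phi_h}{\partial h} \;=\; \sum_{1\leq i<j\leq n}\bigl(\Lambda^1_{ij}-\Lambda^0_{ij}\bigr)\,\frac{\partial^2 \phi_h}{\partial x_i\,\partial x_j}.
\end{equation*}
This is verified on the Fourier side: $\widehat{\phi_h}(t) = \exp(-\tfrac{1}{2}t^\top \Lambda^h t)$, so $\partial_h \widehat{\phi_h} = -\tfrac12 t^\top(\Lambda^1-\Lambda^0)t\,\widehat{\phi_h}$. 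The vanishing of the diagonal of $\Lambda^1-\Lambda^0$ reduces the quadratic form to $\sum_{i<j}(\Lambda^1_{ij}-\Lambda^0_{ij})\,t_it_j$ (with a factor $2$ absorbing the $\tfrac12$), and $t_it_j$ is the Fourier symbol of $-\partial^2/\partial x_i\partial x_j$.

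Differentiating $F(h)$ under the integral sign and applying Plackett's identity yields
\begin{equation*}
F'(h) \;=\; \sum_{i<j}\bigl(\Lambda^1_{ij}-\Lambda^0_{ij}\bigr)\int_{x_k\leq u_k\,\forall k} \frac{\partial^2\phi_h}{\partial x_i\,\partial x_j}(x)\,dx.
\end{equation*}
I would then integrate in $x_i$ and $x_j$ first by the fundamental theorem of calculus (the boundary contributions at $-\infty$ vanish by Gaussian decay), leaving the bivariate marginal density of $(Z^h_i,Z^h_j)$ at $(u_i,u_j)$ times the conditional probability that the remaining coordinates lie below their thresholds. Using the explicit Gaussian formula for the bivariate density, this gives
\begin{equation*}
F'(h) \;=\; \sum_{i<j}\bigl(\Lambda^1_{ij}-\Lambda^0_{ij}\bigr)\,\frac{\exp\!\left(-\dfrac{u_i^2+u_j^2-2\Lambda^h_{ij}u_iu_j}{2(1-(\Lambda^h_{ij})^2)}\right)}{2\pi\sqrt{1-(\Lambda^h_{ij})^2}}\,\mathbb{P}\!\left(Z^h_k\leq u_k\;\forall k\neq i,j\,\Big|\,Z^h_i=u_i,Z^h_j=u_j\right).
\end{equation*}
To obtain the stated upper bound on $F(1)-F(0)$, I discard the summands with $\Lambda^1_{ij}-\Lambda^0_{ij}<0$ (replacing the coefficient by its positive part) and bound the conditional probability by $1$; integrating over $h\in[0,1]$ produces the inequality of the theorem.

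The main obstacle is the justification of Plackett's identity and of the differentiation and integration by parts under the integral when $\Lambda^h$ can be degenerate (so that $\phi_h$ need not exist as a Lebesgue density). The standard workaround is to replace $\Lambda^h$ by $\Lambda^h+\varepsilon I$, which is strictly positive definite and gives Schwartz-class densities for which all manipulations above are valid, and then pass $\varepsilon\to 0^+$ via dominated convergence, using uniform Gaussian tail estimates to control the bivariate density and the conditional probability. A secondary technical point is ensuring that the integrand $(1-(\Lambda^h_{ij})^2)^{-1/2}\exp(\cdots)$ is integrable in $h$, which requires $|\Lambda^h_{ij}|<1$ away from isolated points; this is automatic in the regularized problem and preserved in the limit so long as the final bound is interpreted as allowing $+\infty$ on the right-hand side in the degenerate case (which does not occur in the applications of interest here).
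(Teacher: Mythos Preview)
Your proof is the standard Slepian--Berman interpolation argument and is correct. Note, however, that the paper does not supply its own proof of this theorem: it is quoted as a known result from \cite{lead} (Theorem~4.2.1) and used as a black box in the proof of Proposition~\ref{berker}, so there is nothing in the paper to compare against.
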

\begin{proof}[Proof of Proposition \ref{berker} part (i)]

Let $\rijh=h\rijs+(1-h)\rijb$. Let $l(i)$ and $l(j)$ be such that $t_{l(i)-1}\rn<i\leq t_{l(i)}\rn$ and $t_{l(j)-1}\rn<j \leq t_{l(j)}\rn$. 
Then we use Theorem \ref{lead} with $u_i=\an\beta^{-1}\sqrt{N}+\frac{\log(x_{l(i)})}{\an\beta\sqrt{N}}$. Note that it is always the case that $\Lambda_{ij}^{h}\leq (\Lambda_{ij}^{0})_+$. Then it is not hard to see that for any sequences $\{t_k\}$ and $\{x_k\}$ we can find a constant $C$ s.t.
uniform in $h\in[0,1]$ for $N$ large enough
\begin{equation*}
\exp(-\frac{u_i^2+u_j^2-2\Lambda_{ij}^h u_i u_j}{2(1-(\Lambda_{ij}^{h})^2)})\leq C\exp(-\frac{\antk\beta^{-2} N}{1+(\Lambda_{ij}^{0})_+}).
\end{equation*}
Hence we have $\mathcal{Y}$ and $\mathcal{W}$ a.s.
\begin{align*}
 &\left|\mathbb{P}(\max_{i\leq t_1 \rn,i\in w_{\rho}}\right. {\xni}\leq C_N(x_1),\dots,\max_{i\leq t_l \rn,i\in w_{\rho}}{\xni}\leq C_N(x_l)|\mathcal{Y},\mathcal{W})-\\ 
 &\qquad\mathbb{P}(\max_{i\leq t_1 \rn,i\in w_{\rho}} {\xnni}\leq C_N(x_1),\dots,\left.\max_{i\leq t_l \rn,i\in w_{\rho}}{\xnni}\leq C_N(x_l)|\mathcal{W})\right|\leq\\ &
C \sum_{i< j}^{t\rn}\mathbf{1}\{w_{N,\rho}(i)=w_{N,\rho}(j)=1\}|\rijs-\rijb|\exp(-\frac{\antk \beta^{-2} N}{1+(\Lambda_{ij}^{0})_+})\int_0^1 (1-(\Lambda_{ij}^{h})^2)^{-1/2}\text{d}h.
\end{align*}
\newcommand{\lz}{\rijs}
\newcommand{\lo}{\rijb}
Since $\mathbb{P}(w_{N,\rho}(i)=w_{N,\rho}(j)=1)=\rho^2\an^4$ we get
\begin{align}
 \nonumber&\left|\mathbb{P}(\max_{i\leq t_1 \rn,i\in w_{\rho}}\right. {\xni}\leq C_N(x_1),\dots,\max_{i\leq t_l \rn,i\in w_{\rho}}{\xni}\leq C_N(x_l)|\mathcal{Y})-\\ 
\nonumber &\qquad\mathbb{P}(\max_{i\leq t_1 \rn,i\in w_{\rho}} {\xnni}\leq C_N(x_1),\dots,\left.\max_{i\leq t_l \rn,i\in w_{\rho}}{\xnni}\leq C_N(x_l))\right|\leq\\ &\label{lazim}
\qquad C\rho^2\an^4 \sum_{i< j}^{t\rn}|\lz-\lo|\exp(-\frac{\antk \beta^{-2} N}{1+(\Lambda_{ij}^{0})_+})\int_0^1 (1-(\Lambda_{ij}^{h})^2)^{-1/2}\text{d}h.
\end{align}
If $\lfloor i/\nu \rfloor = \lfloor j/\nu \rfloor$ then dist$(Y_N(i),Y_N(j))\leq |i-j|$, and as a consequence, $\lz\geq \lo>0$. Hence, $\int_0^1 (1-(\rijh)^2)^{-1/2}\text{d}h\leq (1-(\rijs)^2)^{-1/2} $. If $\lfloor i/\nu \rfloor \not= \lfloor j/\nu\rfloor$ then $\rijb=0$ and $\int_0^1 (1-(\rijh)^2)^{-1/2}\text{d}h\leq C$. Hence, (\ref{lazim}) is bounded above by
\begin{align}
\label{lazimiki}C\rho^2\an^4 &\left\{\sum_{\overset{i< j}{\lfloor i/\nu \rfloor =\lfloor j/\nu \rfloor}}^{t\rn}|\lz-\lo|(1-(\rijs)^2)^{-1/2}\exp(-\frac{\antk \beta^{-2} N}{1+(\Lambda_{ij}^{0})_+})\right.\\ \nonumber
&+ \left.\sum_{\lfloor i/\nu \rfloor \not= \lfloor j/\nu \rfloor}^{t\rn}|\lz|\exp(-\frac{\antk \beta^{-2} N}{1+(\Lambda_{ij}^{0})_+})\right\}.
\end{align}
Let us define
\newcommand{\nur}{\nu_{\rho}}
\newcommand{\mur}{\mu_{\rho}}
\newcommand{\rur}{r(N)}
\newcommand{\lf}{\lfloor}
\newcommand{\lr}{\rfloor}
\newcommand{\rzd}{\Lambda_{d}^{0}}
\newcommand{\ydij}{D_{ij}}
\newcommand{\dd}{||d||}
\begin{equation*}
 D_{ij}=\text{dist}(Y_N(i),Y_N(j)),\hspace{0.4in}\Lambda_{d}^{0}=\left(1-\frac{2d}{N}\right)^p.
\end{equation*}
(\ref{lazimiki}) is bounded above by
\begin{align}
 \nonumber C\an^4\sum_{d=0}^N&\left\{\sum_{\overset{i<j}{\lfloor i/\nu\rfloor=\lfloor j/\nu\rfloor}}^{t\rn}(\rzd-\lo)\mathbf{1}\{\ydij=d\}(1-(\rzd)^2)^{-1/2}\exp(-\frac{\antk\beta^{-2} N}{1+\rzd})\right.\\\label{sudan}
&+\sum_{\lfloor i/\nu\rfloor\not=\lfloor j/\nu\rfloor}^{t\rur}(\rzd)_+\mathbf{1}\{\ydij=d\}\exp(-\frac{\antk\beta^{-2} N}{1+(\rzd)_+})\\\nonumber
&+\left.\sum_{\lfloor i/\nu\rfloor\not=\lfloor j/\nu\rfloor}^{t\rur}(\rzd)_-\mathbf{1}\{\ydij=d\}\exp(-\antk\beta^{-2} N)\right\}.
\end{align}
We need the following lemma which will be proved in the next section.
\begin{lemma}\label{aslanimik}
 Let $||d||=\min (d,N-d)$. For any $\eta>0$, there exists a constant, $C=C(\nu,\eta,c)$ such that, $\mathcal{Y}$-a.s. for $N$ large enough, for all $d \in \{0,\dots ,N\}$
\begin{equation}\label{cimbomik}
 \sum_{\lfloor i/\nu\rfloor\not=\lfloor j/\nu\rfloor}^{t\rur}\mathbf{1}\{\ydij=d\}\leq C\left\{t^2 \rur^2 2^{-N}\comnd+t\frac{\rur e^{\eta\antk||d||}}{\nu\antk}\right\},
\end{equation}
and
\begin{equation}\label{cimbombir}
 \sum_{\overset{i<j}{\lfloor i/\nu\rfloor=\lfloor j/\nu\rfloor}}^{t\rur}\mathbf{1}\{\ydij=d\}(\rzd-\rijb)\leq C\rur\frac{d^2}{N^2} \mathbf{1}\{d\leq \nu\}.
\end{equation}
\end{lemma}
\vspace{0.1in}

By Lemma \ref{aslanimik}, the first line of (\ref{sudan}) is bounded above by
\begin{equation}\label{estone}
 C\an^4\sum_{d=0}^{\nu}  \frac{\rur d^2}{N^2} (1-(\rzd)^2)^{-1/2}\exp(-\frac{\antk\beta^{-2} N}{1+\rzd}).
\end{equation}
The second line of (\ref{sudan}) is bounded above by the sum of
\begin{equation}\label{esttwo}
 C\an^4\sum_{d=0}^N t^2{\rur^2}2^{-N}\comnd (\rzd)_+\exp(-\frac{\antk\beta^{-2} N}{1+(\rzd)_+})
\end{equation}
and
\begin{equation}\label{estthree}
 C\antk\sum_{d=0}^N \frac{\rur}{\nu}e^{\eta\antk||d||}(\rzd)_+\exp(-\frac{\antk\beta^{-2} N}{1+(\rzd)_+}).
\end{equation}
Finally, the third line of (\ref{sudan}) is bounded above by
\begin{equation}\label{estfour}
 C\an^4\sum_{d=N/2}^N \left\{t^2{\rur^2}2^{-N}\comnd+t\frac{\rur}{\nu\antk}e^{\eta\antk||d||}\right\}(\rzd)_- \exp(-\antk\beta^{-2} N).
\end{equation}
We start working on the estimate of (\ref{esttwo}). Let $I(u)$ be
\begin{equation}\label{hadkis}
 I(u)=u\log u +(1-u) \log(1-u)+\log 2,
\end{equation}
and let $J_N(u)$ be
\begin{equation}\label{berrat}
 J_N(u):=2^{-N}{{N}\choose{\lfloor Nu \rfloor}}e^{NI(u)}\sqrt{\frac{\pi N}{2}}.
\end{equation}
Using Sterling's formula we have $J_N(u)\overset{N\to\infty}\longrightarrow(4u(1-u))^{-1}$ uniform in $u$ on compact subsets of $(0,1)$. Also, there exists a constant $c$ s.t. $J_N(u)\leq cN^{1/2}$ for all $N$ and for all $u\in[0,1]$. Hence, using the definition of $\rn$, (\ref{esttwo}) is bounded above by
\newcommand{\yup}{\Upsilon_{N,p,\beta}(u)}
\newcommand{\ecnr}{\antk\beta^{-2} N}
\begin{align}
\label{esttwotwo}
C \antk N^{1/2}\sum_{d=0}^N J_N\left(\frac{d}{N}\right)\left(1-\frac{2d}{N}\right)^p_+\exp\left\{N\Upsilon_{N,p,\beta}\left(\frac{d}{N}\right)\right\},
\end{align}
where
\begin{equation}
 \yup:=\antk\beta^{-2}-\frac{\antk\beta^{-2}}{1+(1-2u)^p_+}-I(u).
\end{equation}

Since $\an\overset{N\to\infty}\longrightarrow 0$, by (\ref{hadkis}) it is easy to see that for all $p\geq 2$ there exist positive constants $\delta,\delta'$ and $c$ s.t.
\begin{align}
 \qquad \qquad&\yup\leq -\delta' \hspace{0.2in}\hspace{0.58in}\text{for all }u\in[0,1]\setminus (1/2-\delta,1/2+\delta),
 \\\label{guzelyaz}&\yup\leq -c(u-1/2)^2\hspace{0.2in}\text{for all }u\in(1/2-\delta,1/2+\delta).
\end{align}
Then, the sum over $d$'s such that $d/N\notin (1/2-\delta,1/2+\delta)$ in (\ref{esttwotwo}) is bounded above by $\exp(-\delta''N)$ for some $\delta''>0$ small enough.

Now we estimate the sum in (\ref{esttwotwo}) over $d$'s with $d/N\in(1/2-\delta,1/2+\delta)$. Note that $J_N(d/N)\leq c$ uniformly for $d\in(1/2-\delta,1/2+\delta)$. Using this and (\ref{guzelyaz}) 
the sum over such $d$'s in (\ref{esttwotwo}) is bounded above by
\begin{align*}
&
\leq C \antk N^{1/2}\sum_{d/N\in (1/2-\delta,1/2+\delta)}\left(1-\frac{2d}{N}\right)^p_+\exp\left\{-cN\left(\frac{d}{N}-\frac{1}{2}\right)^2\right\}\\&
\leq C\antk N^{3/2}\int_{1/2-\delta}^{1/2+\delta}(1-2u)_+^p\exp(-c'N(u-1/2)^2)du\\&
\leq C\antk N^{3/2}\int_{0}^{\delta}x^p\exp(-c'N x^2)dx\\&
\leq C\antk N^{3/2}\int_{0}^{\delta {N^{1/2}}}\frac{y^p}{N^{p/2}}\exp(-c'y^2)\frac{dy}{N^{1/2}}\\&
\leq C\antk N^{1-p/2}\overset{N\to\infty}\longrightarrow 0,
\end{align*}
since $\antk \to 0 $ as $N\to\infty$ and $p\geq 2$. This finishes the estimate on (\ref{esttwo}).

Now we work on (\ref{estthree}). (\ref{estthree}) is bounded above by
\newcommand{\yuptil}{\tilde{\Upsilon}_{N,p,\beta}}
\newcommand{\yuptilndn}{\tilde{\Upsilon}_{N,p,\beta}(d/N)}
\qw\label{estthreethree}
\leq \frac{C\an N^{1/2}}{\nu}\sum_{d=0}^N \left(1-\frac{2d}{N}\right)^p_+\exp\left\{N\yuptil\left(\frac{d}{N}\right)\right\},
\qwe
where
\begin{equation*}
 \yuptil(u)=\frac{\antk\beta^{-2}}{2}+\eta\antk||u||-\frac{\antk\beta^{-2}}{1+(1-2u)_+^p},
\end{equation*}
and $||u||:=\min(u,1-u)$. It is clear that for any $p\geq 2$, for $\eta$ small enough we can find positive constants $\delta,\delta'$ and $c$ s.t. for all $N$ large enough
\begin{equation}
  \yuptil\leq -\antk\delta' \hspace{0.2in}\text{for all }u\in [\delta,1-\delta],
\end{equation}
and
\begin{align}
 &\yuptil\leq -c\antk u\hspace{0.527in}\text{for all }u\in [0,\delta],\\&
\yuptil\leq -c\antk (1-u)\hspace{0.2in}\text{for all }u\in [1-\delta,1].
\end{align}
By this and the fact that $\antk N\overset{N\to\infty}\longrightarrow\infty$ the sum in (\ref{estthree}) over $d$'s such that $d/N\in[\delta,1-\delta]$ is bounded above by $\exp(-\delta''\antk N)$ for $\delta''>0$ and 
hence, does not pose a problem. 
The sum over $d$'s with $d/N\in[0,\delta]$ in (\ref{estthreethree}) is bounded above by
\begin{align*}
 &\hspace{0.2in}\frac{C\an N^{1/2}}{\nu}\sum_{d/N\in[0,\delta]}\left(1-\frac{2d}{N}\right)^p_+\exp\left\{-\delta'\antk N\left(\frac{d}{N}\right)\right\}\\
&\leq \frac{C\an N^{3/2}}{\nu}\int_0^{\delta}(1-2u)^p\exp(-c'\antk N u)du\\&
\leq \frac{C\an N^{3/2}}{\nu}\int_0^{\delta\antk N}\exp(-cx)\frac{dx}{\antk N}\\&
\leq C\frac{N^{1/2}}{\an \nu}\overset{N\to\infty}\longrightarrow 0,
\end{align*}
since we have $N^{1/2}\an^{-1}\ll \nu$ (recall (\ref{bukadar})). The estimate for the sum over $d$'s with $d/N\in[1-\delta,1]$ can be done analogously. Hence, the error term (\ref{estthree}) goes to 0 as $N\to\infty$.

Now we estimate (\ref{estone}). (\ref{estone}) is bounded above by
\begin{equation}
 \label{estoneone}C\an^3\sum_{d=0}^{\nu} N^{1/2}\frac{d^2}{N^2}\left(1-(1-2dN^{-1})^{p}\right)^{-1/2}\exp\left\{\antk\beta^{-2} N\left(\frac{1}{2}-\frac{1}{1+(1-2dN^{-1})^p}\right)\right\}
\end{equation}
Note that since $d\leq \nu$ and $\nu\ll N$ we can find constant $c_1,c_2$ such that for all $d\in\{1,\dots,\nu\}$
\begin{equation}
 1-c_1\frac{d}{N}\leq \left(1-\frac{2d}{N}\right)^p\leq 1-c_2\frac{d}{N},
\end{equation}
for $N$ large enough. As a consequence (\ref{estoneone}) is bounded above by
\begin{align}
 &C\sum_{d=0}^{\nu}\an^3 N^{1/2} \left(\frac{d}{N}\right)^{3/2}\exp\left\{-c\antk\beta^{-2} N\left(\frac{d}{N}\right)\right\}\\
&\leq C\an^3 N^{3/2}\int_{0}^{\nu/N}u^{3/2}\exp(-c\antk\beta^{-2} N u)du\\
&\leq C\an^3 N^{3/2}\int_{0}^{\nu\antk}\frac{x^{3/2}}{\an^{3}N^{3/2}}\exp(-cx)\frac{dx}{\antk N}\\
&\leq C\frac{1}{\an^2 N}\overset{N\to\infty}\longrightarrow 0,
\end{align}
again since $\antk N\to\infty$ as $N$ diverges.

Finally, we work on (\ref{estfour}). Using (\ref{berrat}), the first term of (\ref{estfour}) is bounded above by
\begin{equation}\label{estfourfour}
 C\antk N^{1/2}\sum_{d=N/2}^N J_N\left(\frac{d}{N}\right)\exp\left\{-NI\left(\frac{d}{N}\right)\right\}\left(\frac{2d}{N}-1\right)^p.
\end{equation}
We can find constant $\delta,\delta'$ and $c$ such that
\begin{equation}\label{kararbir}
 I(u)\leq -c(u-1/2)^2\hspace{0.2in}\text{for all }u\in[1/2,1/2+\delta),
\end{equation}
and
\begin{equation}\label{karariki}
 I(u)\leq -\delta'\hspace{0.2in}\text{for all }u\in[1/2+\delta,1].
\end{equation}
Then, it can be shown that the first part of (\ref{estfour}) goes to 0 as $N\to\infty$ completely analogous to the proof of the estimate of (\ref{esttwo}).

Note that for $\eta$ small enough we can find a constant such that the second part of (\ref{estfourfour}) is bounded above by
\begin{align*}
 &\frac{\an N^{1/2}}{\nu}\sum_{d=N/2}^N \left(\frac{2d}{N}-1\right)^p\exp(-c\antk\beta^{-2} N)\leq \exp(-c'\antk N)
\end{align*}
for some $c'$ small enough, since $\antk N\to\infty$ as $N\to\infty$. This finishes the estimate of the second part of (\ref{estfour}), and thus, the proof of part $(i)$ Proposition \ref{berker}.
\end{proof}

\begin{proof}[Proof of Proposition \ref{berker} part (ii)]
To prove (\ref{basak}) we use Theorem \ref{lead} with $\xi=X_N^1$ and $\mu=X_N^0$. By the same arguments at beginning of the proof of part $(i)$ of Proposition \ref{berker} we have for some constant $C$
\begin{align}
 \nonumber&\left\{\mathbb{P}(\max_{i\leq t_1 \rn} {\xnni}\leq C_N(x_1),\dots,\max_{i\leq t_l \rn}{\xnni}\leq C_N(x_l))-\right.\\ \nonumber &\left.\mathbb{P}(\max_{i\leq t_1 \rn} {\xni}\leq C_N(x_1),\dots,\max_{i\leq t_l \rn}{\xni}\leq C_N(x_l)|\mathcal{Y})\right\}\leq\\\label{hulya}
&\qquad C\sum_{i<j}^{t\rn}(\Lambda_{ij}^1-\Lambda_{ij}^0)_+\exp(-\frac{\antk\beta^{-2} N}{1+(\Lambda_{ij}^0)_+})\int_0^1(1-(\Lambda_{ij}^h)^2)^{-1/2}\text{d}h.
\end{align}
\newcommand{\lf}{\lfloor}
\newcommand{\rf}{\rfloor}
\newcommand{\lz}{\Lambda_{ij}^0}
\newcommand{\lb}{\Lambda_{ij}^1}
As before, if $\lf i/\nu \rf =\lf j/\nu \rf$ then $\lb\leq \lz$ and subsequently $(\lb-\lz)_+=0$. If $\lf i/\nu \rf \not= \lf j/\nu \rf$ then $\lb=0$, then if $\lz<0$, $(\lb-\lz)_+=(\lz)_-$ and if $\lz>0$, $(\lb-\lz)_+=0$. Also, in this case, the integral term in the above display is bounded. Hence, (\ref{hulya}) is bounded above by
\begin{equation}\label{vildan}
 C\sum_{i<j}^{t\rn}(\Lambda_{ij}^0)_-\exp(-\antk\beta^{-2} N).
\end{equation}
Note that when $p\geq 2$ is even the above term is always zero and in this case (\ref{basak}) is trivial. From now on we assume that $p\geq 3$ is odd. Let $D_{ij}$ and $\Lambda_d^0$ be as before. Using the fact that $p$ is odd it is easy to see that (\ref{vildan}) is bounded above by
\begin{equation}\label{mehmet}
 C\sum_{d=N/2}^N\sum_{|i-j|>N/2}^{t\rn}\mathbf{1}\{D_{ij}=d\}\left(\frac{2d}{N}-1\right)^p\exp(-\antk\beta^{-2} N).
\end{equation}
Using the inequality (\ref{cimbomik}) of Lemma \ref{aslanimik} and the definition of $\rn$ we can see that (\ref{mehmet}) is bounded above by the sum of
\begin{equation}\label{hayat}
 C\sum_{d=N/2}^N \an^{-2} N  2^{-N}\comnd \left(\frac{2d}{N}-1\right)^p.
\end{equation}
and
\begin{equation}\label{yasam}
C\sum_{d=N/2}^N  \frac{N^{1/2}}{\nu \an^3}e^{\eta\antk (N-d)}\left(\frac{2d}{N}-1\right)^p\exp(-\antk\beta^{-2} N/2).
\end{equation}

We start with the estimate of (\ref{hayat}). Let $I(u)$ and $J_N(u)$ be as defined before. Using the properties of $I(u)$ and $J_N(u)$ we can see that (\ref{hayat}) is bounded above by the sum of
\qw\label{alala}
C\sum_{\frac{d}{N}\in [1/2,1/2+\delta)}\an^{-2}N^{1/2}\exp(-cN(d/N-1/2)^2)(2d/N-1)^p
\qwe
and
\qw\label{hocaaa}
C\sum_{\frac{d}{N}\in [1/2+\delta,1]}\an^{-2}N\exp(-N\delta' )(2d/N-1)^p,
\qwe
for some appropriate positive numbers $c,\delta$ and $\delta'$. It is too see that (\ref{hocaaa}) is exponentially small in $N$ and does not pose a problem. The sum in (\ref{alala}) is bounded above by a constant times
\qw
\an^{-2}N^{3/2}\int_{1/2}^{1/2+\delta}\exp(-cN(u-1/2)^2)(u-1/2)^p \text{d}u \leq C \an^{-2}N^{1-p/2},
\qwe
where for the last inequality we used the same changes of variables we used in the proof of Proposition \ref{berker}. Note that for $p\geq 4$, $\an^{-2}N^{1-p/2}$ converges to 0 with $N$ for any $c\in(0,1/2)$. It converges to 0 for $p=3$ as well 
if $\an^{-2}N^{1/2}=o(1)$ which is the case for $c\in(0,1/4)$ as in the hypothesis of part $(ii)$ of Proposition \ref{berker}. This finishes the estimate on (\ref{hayat}).

Finally, note that for $\eta$ small enough the sum in (\ref{yasam}) is exponential small in $N$. Hence, this finishes the proof of part $(ii)$ of Proposition \ref{berker}.
\end{proof}

\section{Random walk results}
\newcommand{\yn}{Y_N}
\newcommand{\kur}{K_N}
\newcommand{\dist}{\text{dist}}
\newcommand{\ydij}{D_{ij}}
\newcommand{\rur}{\rn}
In this section we prove Lemma \ref{aslanimik}. Let $\pro_x$ denote the probability law of the simple random walk $\yn$ started at $\yn(0)=x$. Let $Q=Q_k,\;k\in\mathbb{N}$ be a birth-death process on $\{1,\dots,N\}$ with transition probabilities $p_{k,k-1}=1-p_{k,k+1}=k/N$. Let $P_i$ and $E_i$ denote the law and expectation of $Q$ conditioned on $Q_0=i$. Let us also define $p_k(d)$ as $p_k(d):=P_0(Q_k=d)$. Note that, under $P_0$ for any $j\in\mathbb{N}$ we have
$\dist(\yn(0),\yn(k))\overset{\text{d}}=\dist(\yn(j),\yn(j+k))\overset{\text{d}}=Q_k.
$ Finally, let $T_d=\min\{k\geq 1:Q_k=d\}$, be the hitting time of $d$. 

A simple calculation shows that the weight of the invariant distribution of $Q$ at $d$ is equal to $2^{-N}\comnd$. The following theorem gives a sharp estimate for the difference of $p_k(\cdot)$ and the invariant measure, for $k$ large. It is stated and proved in \cite{bbc} using the coupling technique of \cite{matt} and we do not repeat it here.
\begin{theorem}\label{mathew}({\it Lemma 4.1 on page 17 in \cite{bbc}})

There exists a $K>0$ large enough such that for $k\geq K_N:=KN^2\log(N)$ for any $d\in\{0,1,\dots,N\}$
\begin{equation}
 \left|\frac{p_k(d)+p_k(d+1)}{2}-2^{-N}\comnd\right|\leq 2^{-4N}.
\end{equation}
\end{theorem}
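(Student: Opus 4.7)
The plan is to prove this by coupling. Realise $Q$ as the image under the Hamming-distance map of the simple random walk $X$ on $\hn$ started at a fixed vertex $v_0$, and introduce a second SRW $X'$ initialised from the uniform (stationary) law on $\hn$. The smoothing of $p_k(d)$ against $p_k(d+1)$ in the statement is the standard device for killing the period-$2$ behaviour of $Q$ that would otherwise obstruct any pointwise comparison of $p_k(d)$ with $\pi(d)=2^{-N}\comnd$.

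First I would run the canonical coordinate coupling on the hypercube: at each step, pick a coordinate $I\in\{1,\dots,N\}$ uniformly and flip it in \emph{both} $X$ and $X'$. Under this rule the set of coordinates on which $X$ and $X'$ agree is monotone non-decreasing in time, and coupling occurs as soon as every coordinate has been selected at least once; hence the coupling time $\tau$ is dominated by the coupon-collector time for $N$ coupons, giving $\mathbb{P}(\tau>N\log N+cN)\leq e^{-c}$ for every $c>0$ by standard concentration.

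To upgrade this polynomial-precision estimate to the stretched-exponential bound $2^{-4N}$, I would partition $[0,K_N]$ into $M=8N$ disjoint windows of length $N\log N+CN$ with $C=4\log 2$, restarting the coupling attempt from the current pair of states at the start of each window. Conditioning on the configuration at the start of each window makes the successive attempts (conditionally) independent Bernoullis, each succeeding with probability at least $1-e^{-C}$, so the probability that all $M$ windows fail is at most $e^{-CM}=2^{-4N}$ and the total time consumed is at most $M(N\log N+CN)\leq KN^2\log N$ for $K$ large enough. On the event $\{\tau\leq k\}$ one has $X_k=X'_k$ with $X'_k$ uniform on $\hn$, which gives the total-variation bound $\|\mathcal{L}(Q_k)-\pi\|_{\mathrm{TV}}\leq 2^{-4N}$; evaluating this bound on the pair of adjacent shells $\{d,d+1\}$ absorbs the parity of $k$ and yields the pointwise smoothed estimate in the statement.

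The main obstacle is precisely this amplification from polynomial to stretched-exponential precision: it is what forces the quadratic time scale $N^2\log N$, since one needs $\Theta(N)$ independent coupling attempts each of length $\Theta(N\log N)$, and it requires the windows to be set up with enough care (restart exactly at window boundaries, conditioning on the intermediate state) that their successes can be treated as genuinely independent Bernoulli trials.
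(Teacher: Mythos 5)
Your coupling step is broken: if at each step you pick a uniform coordinate $I$ and \emph{negate} it in both $X$ and $X'$, then whether $X$ and $X'$ agree at any given coordinate is invariant under the step, so $\mathrm{dist}(X_k,X'_k)$ is constant in $k$ and the two walks never meet unless they already agreed. The coupon-collector bound you invoke (``coupling occurs once every coordinate has been selected'') is the refresh coupling for the lazy/heat-bath walk on the cube, where a step \emph{resamples} a coordinate rather than flipping it; that chain is aperiodic. The chain $Q$ here is the image of the genuine nearest-neighbour SRW, which is periodic of period $2$ --- which is exactly why the statement averages $p_k(d)$ with $p_k(d+1)$. For the SRW one needs a coupling that actively contracts the Hamming distance between $X$ and $X'$: for instance, whenever the walks disagree on at least two coordinates, flip a disagreeing coordinate in each (reducing the disagreement set by two), move them in lockstep once they agree, and handle the residual parity mismatch explicitly, e.g.\ by coupling $X_k$ against $X'_k$ or $X'_{k+1}$ according to the parity of $\mathrm{dist}(X_0,X'_0)$.

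Independently of the coupling defect, your concluding total-variation step is also false: $\|\mathcal{L}(Q_k)-\pi\|_{\mathrm{TV}}$ does not tend to zero at all, it stays $\approx 1/2$, since $Q_k$ is supported on a single parity class while $\pi=\mathrm{Bin}(N,1/2)$ charges both classes with mass $\approx 1/2$. No mixing estimate can make that quantity $\leq 2^{-4N}$, and ``evaluating on the pair of adjacent shells $\{d,d+1\}$'' does not recover the stated smoothed bound from a false TV inequality; the periodicity has to be built into the coupling, not patched afterwards. The $\Theta(N)$-window restart scheme is a sound amplification device in spirit, but it must sit on top of a coupling that actually converges; note also it is not what forces the quadratic time scale, since the one-shot tail $\mathbb{P}(\tau>N\log N+cN)\leq e^{-c}$ with $c$ of order $N$ already gives failure probability $2^{-4N}$ in a single window of length $O(N^2)$.
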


\begin{lemma}\label{onceblok}
 Under the hypothesis of Lemma \ref{aslanimik} there exists a positive constant $C$ that does not depend on $d$, s.t. $\mathcal{Y}$ a.s.
\begin{equation}\label{fenerik}
 \sum_{\overset{i<j}{\lfloor i/\nu\rfloor=\lfloor j/\nu\rfloor}}^{t\rn}\mathbf{1}\{\ydij=d\}\leq Ct{\rn} \mathbf{1}\{d\leq \nu\},
\end{equation}
for all $d\in\{0,1,\dots,N\}$ and $N$ large enough.
\end{lemma}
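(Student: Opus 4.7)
The plan is to decompose the sum across blocks into i.i.d.\ pieces, bound the first moment of each piece via a drift argument for the distance birth-death chain $Q$, and then promote the moment estimate to the desired $\mathcal{Y}$-a.s.\ bound via Hoeffding's inequality and Borel--Cantelli. The factor $\mathbf{1}\{d\leq\nu\}$ comes for free: whenever $i<j$ lie in the same block, $|i-j|<\nu$, and since the SRW flips one coordinate per step, $D_{ij}\leq|i-j|<\nu$, so the indicator vanishes for $d\geq\nu$. From now on assume $d\leq\nu$.

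Let $F_d^{(k)}:=\sum_{k\nu\leq i<j<(k+1)\nu}\mathbf{1}\{D_{ij}=d\}$ be the contribution of block $k$. Because $D_{ij}$ for $(i,j)$ in block $k$ depends only on the coordinate flips during steps $k\nu+1,\ldots,(k+1)\nu$, and distinct blocks use independent flip sequences, the variables $(F_d^{(k)})_{k\geq 0}$ are i.i.d.\ and deterministically bounded by $\binom{\nu}{2}\leq\nu^2$. Their mean is
\[
\mathbb{E}[F_d^{(k)}]=\sum_{m=1}^{\nu-1}(\nu-m)p_m(d)\leq\nu\sum_{m=1}^{\nu}p_m(d).
\]
The central input is $\sum_{m=1}^{\nu}p_m(d)\leq C$ uniformly for $d\leq\nu\ll N$. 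Indeed, started at $0$, the chain $Q$ has outward drift $1-2k/N\geq 1-2\nu/N\to 1$ on the relevant range, so a stochastic comparison with a biased random walk on $\mathbb{Z}$ whose escape probability tends to $1$ bounds the Green's function at every point by $1/(1-2\nu/N)=O(1)$. Hence $\mathbb{E}\bigl[\sum_k F_d^{(k)}\bigr]\leq Ct\rn$.

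To pass to the $\mathcal{Y}$-a.s.\ statement, apply Hoeffding's inequality to the $n=t\rn/\nu$ i.i.d.\ summands $F_d^{(k)}\in[0,\nu^2]$: for some $c>0$,
\[
\mathbb{P}\Bigl(\textstyle\sum_k F_d^{(k)}>2Ct\rn\Bigr)\leq 2\exp\bigl(-c\,t\rn/\nu^3\bigr).
\]
Since $\rn\geq\exp(\antk\beta^{-2}N/2)=\exp\bigl(N^{1-2c}/(2\beta^2)\bigr)$ diverges super-polynomially in $N$ while $\nu^3=N^{3\omega}$ is only polynomial, this tail is summable even after a union bound over the $O(\nu)$ relevant values of $d$. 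Borel--Cantelli then yields the stated bound $\mathcal{Y}$-a.s.\ uniformly in $d$, for $N$ large enough, with constant $C$ independent of $d$.

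The one genuinely analytic step is the uniform estimate $\sum_m p_m(d)=O(1)$ on $d\leq\nu$; the drift comparison sketched above is the natural tool, and it handles the boundary regimes uniformly (for $d$ close to $0$ the walk leaves $0$ immediately and seldom returns, while for $d$ close to $\nu$ the walk only visits $d$ within a narrow $O(\sqrt{d})$-window in time on which $p_m(d)=O(1/\sqrt{d})$). The remainder of the proof is essentially routine i.i.d.\ concentration.
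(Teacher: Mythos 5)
Your proof is correct and follows essentially the same strategy as the paper: block decomposition into i.i.d.\ one-block contributions, a $O(1)$ bound on the expected number of visits $\sum_m p_m(d)$ of the birth-death distance chain to level $d\leq\nu$, and then concentration plus Borel--Cantelli to upgrade the moment bound to a $\mathcal{Y}$-a.s.\ statement. The only differences are cosmetic: the paper uses Chebyshev rather than Hoeffding (which suffices since $\rn$ grows super-polynomially, making $\nu^3/\rn$ summable), and it cites the proof of Lemma 4.2 of \cite{bbc} for the bound $p(d)\leq 2$ rather than giving the drift comparison with a biased walk as you do; your self-contained version of that estimate is a reasonable alternative.
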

\begin{proof}
Lemma is trivially true for $d>\nu$. Now we assume $d\leq \nu$. Define
\begin{equation}
 p(d)=E_0[\sum_{i=1}^{\nu}\mathbf{1}\{Q_{i}=d\}].
\end{equation}
Following the same arguments as in the first part of the proof of Lemma 4.2 in \cite{bbc} we have
\qw
p(d) \leq 2.
\qwe
Now we define the one-block contribution
\begin{equation}
 \sum_{i=1}^{\nu}\sum_{j=i+1}^\nu\mathbf{1}\{D_{ij}=d\}=:Z.
\end{equation}
Using the upper bound we have
\begin{equation}
 E[Z]=\sum_{i=1}^\nu E[\sum_{j=i+1}^\nu \mathbf{1}\{D_{ij}=d\}]\leq 2\nu.
\end{equation}
Since $Z\leq \nu^2$ a.s. 
\begin{equation}
 \text{Var}[Z]\leq \nu^4.
\end{equation}
The left-hand side of (\ref{fenerik}) is stochastically bounded above by $\sum_{k=1}^m Z_k$ where $m=\lceil\frac{t\rn}{\nu}\rceil$ and $Z_k$ is a sequence of i.i.d. copies of $Z$. Then using Chebyshev's inequality
\begin{align*}
 P(Z_1+\dots+Z_m\geq \rn+mE[Z])&=P(Z_1+\cdots+Z_m-mE[Z]\geq \rn)\\
&\leq \frac{1}{\rn^2}m\text{Var}[Z]\leq \frac{\nu^3}{\rn}.
\end{align*}
Since $\rn=\exp(cN^{d})$ for some $d>0$ we have $\sum_{N}\frac{\nu^3}{\rn}<\infty$ and by Borel-Cantelli Lemma, the left-hand side of (\ref{fenerik}) is bounded above by
\begin{equation}
 mE[Z]+\rn\leq m 2\nu+\rn\leq C\rn
\end{equation}
for all $N$ large enough for all $d\leq \nu$.
\end{proof}

\begin{proof}[Proof of Lemma \ref{aslanimik}]
We start with the proof of (\ref{cimbombir}). Note that for $i,j$ where $\lfloor i/\nu \rfloor =\lfloor j/\nu\rfloor$ we have
\begin{equation}\label{hadibak}
 \Lambda_{d}^0-\Lambda_{ij}^{1}=\left(1-\frac{2d}{N}\right)^p-\left(1-\frac{2p|i-j|}{N}\right)=\frac{2p(|i-j|-d)}{N}+O\left(\frac{d^2}{N^2}\right).
\end{equation}
The contribution from the second error term above is bounded by the right-hand side of (\ref{cimbombir}) by Lemma \ref{onceblok}. Hence, to finish the proof we need to control the contribution from the first error term. Define,
\begin{equation}
 \tilde{p}(d):=E_0[\sum_{i=0}^{\nu}(i-d)\mathbf{1}\{Q_i=d\}].
\end{equation}
Let us define $T_d^1=T_d$ and $T_d^{k}=\{i>T_d^{k-1}:Q_i=d\}$, for $k\geq 2$. Then we have
\begin{align}
 \nonumber E_0[\sum_{i=0}^{\nu}(i-d)\mathbf{1}\{Q_i=d\}]&=E_0[\sum_{k=1}^{\infty}(T_d^k-d)\mathbf{1}\{T^k_d<\nu\}]\\\nonumber&=E_0[\sum_{k=1}^{\infty}(T_d^k-T_d^1+T_d^1-d)\mathbf{1}\{T^k_d<\nu\}]\\&\label{esseksu}\leq E_0[(T_d-d)\mathbf{1}\{T_d< \nu \}]\left(1+E_d[\sum_{i=1}^{\infty}T_d^i\mathbf{1}\{T_d^i<\nu\}]\right).
\end{align}

It is easy to see that
$
 P_0(T_d=d)=1\cdot\frac{N-1}{N}\cdot\frac{N-2}{N}\cdots \frac{N-d+1}{N},
$
and thus,
$
 P_0(T_d)\leq Ce^{-d^2/N}.
$ Then $E_0[\sum_{k=1}^{\infty}(T_d^k-d)\mathbf{1}\{T^k_d<\nu\}]$ is bounded below by
$$
 E_0[\sum_{k=1}^{\infty}(T_d^k-d)\mathbf{1}\{T^k_d<\nu\}|T_d\not=d]P_0(T_d\not= d)\geq 2(1-P_0(T_d=d))\geq 2(1-Ce^{-d^2/N})\geq C\frac{d^2}{N}.
$$
Note that if $T_d\geq d+2k$ for some positive $k$ then the random walk $Q_i$ must make at least $k$ steps left. Since the probability of any step left is bounded by $d/N$ before reaching $d$, we have
\begin{equation*}
 P(T_d\geq d+2k)\leq {{d+2k}\choose{k}}\left(\frac{d}{N}\right)^k\leq C \frac{d^{2k}}{N^k}.
\end{equation*}
As a result we get
$$ E_0[(T_d-d)\mathbf{1}\{T_d< \nu \}]=\sum_{k=1}^{\infty}P_0(T_d\geq d+2k)\leq C\sum_{k=1}^{\infty}\frac{d^{2k}}{N^{k}}\leq C\left(\frac{1}{1-\frac{d^2}{N}}-1\right)\leq C \frac{d^2}{N}.
$$
Hence, we have
\begin{equation}
 C_1 \frac{d^2}{N}\leq E_0[(T_d-d)\mathbf{1}\{T_d< \nu \}]\leq C_2 \frac{d^2}{N}.
\end{equation}
Note that for the second term in (\ref{esseksu}) we have
\begin{align*} 1+E_d[\sum_{i=1}^{\infty}T_d^i\mathbf{1}\{T_d^i<\nu\}]&\leq 1+E_{d}[T_d\mathbf{1}\{T_d<\nu\}](1+E_d[\sum_{i=1}^{\infty}T_d^i\mathbf{1}\{T_d^i<\nu\}])\\&\leq \sum_{k=0}^{\infty}\left\{E_d[T_d\mathbf{1}\{T_d<\nu\}]\right\}^k.
\end{align*}
Also note that
$
 P_d(T_d=2k)\leq {{2k}\choose{k}}\left(\frac{\nu}{N}\right)^k
$. Using the bound ${{2k}\choose{k}}\leq Ck^{-1/2}2^{k}$ we get
\begin{equation*}
 E_d[T_d\mathbf{1}\{T_d<\nu\}]\leq C\sum_{k=1}^{\nu/2}2 k^{1/2}2^{k}\left(\frac{\nu}{N}\right)^k \leq C\sum_{k=1}^{\infty}\left(\frac{5\nu}{N}\right)^k\leq C\frac{\nu}{N}.
\end{equation*}
Hence, we have
\begin{equation}
 C_1 \frac{d^2}{N}\leq \tilde{p}(d)\leq C_2\frac{d^2}{N}.
\end{equation}
Now let us define the one-block contribution from the first error term in (\ref{hadibak})
\begin{equation}
 \sum_{i,j=1}^{\nu}(|i-j|-d)\mathbf{1}\{D_{ij}=d\}=:{\nu}^3 \tilde{Z}.
\end{equation}
Note that $\tilde{Z}\in[0,1]$. Hence, the contribution from the first error term to the left-hand side of (\ref{fenerik}) is stochastically bounded above by
$
 \frac{2p}{N}{\nu}^3\sum_{k=1}^{m}\tilde{Z}_k,
$
where $m=\lceil t\rn/\nu\rceil$ and $\tilde{Z}_k$ is a sequence of i.i.d. copies of $\tilde{Z}$. By above estimates we have
\begin{equation}
 C_1 \frac{d^2}{N}{\nu}^{-3}\leq E[\tilde{Z}]\leq C_2 \frac{d^2}{N}{\nu}^{-2}.
\end{equation}
Hence, using Hoeffding's inequality we get
\begin{equation*}
 P(\sum_{k=1}^{m}\tilde{Z}_k\geq 2mE[\tilde{Z}])\leq \exp(-2mE[\tilde{Z}])\leq \exp(-2m C_2 \frac{d^2}{N\nu^3}),
\end{equation*}
and by Borel-Cantelli Lemma we can conclude that the contribution from the first error term is a.s. bounded above by
\begin{equation}
  \frac{2p}{N}{\nu}^3 2mE[\tilde{Z}]\leq C\frac{2p}{N}{\nu}^3 \frac{\rn}{\nu}\frac{d^2}{N}\frac{1}{\nu^2}=C{\rur}\frac{d^2}{N^2},
\end{equation}
for all $N$ large enough. This finishes the proof of inequality (\ref{cimbombir}).

Next we prove the first part of Lemma \ref{aslanimik} that is inequality (\ref{cimbomik}). For ease of notation let us define
$ R:=t\rn,
$ and let us denote by $A_{d,\eta}(N)$ the term inside the curly bracket on the right-hand side of (\ref{cimbomik}), that is
\begin{equation}
 A_{d,\eta}(N):=t^2{\rur^2}2^{-N}\comnd+t\frac{\rur e^{\eta\antk ||d||}}{\nu\antk}.
\end{equation}
We can consider the couples $(i,j)$ with $i<j$ only. We first estimate the sum over pairs $(i,j)$ such that $j-i\geq K_N$. Since $j-i\geq\kur$ we have $\lfloor i/\nu\rfloor\not=\lfloor j/\nu\rfloor$. Thus, the left-hand side of (\ref{cimbomik}) is equal to (up to a constant)
\begin{equation}\label{ugras}
 \sum_{j-i\geq \kur}^{R}\mathbf{1}\{D_{ij}=d\}.
\end{equation}
Using Theorem \ref{mathew}, we have for any $d$ and $\eta$
\begin{equation*}
 E[\sum_{j-i\geq \kur}^{R}\mathbf{1}\{D_{ij}=d\}]=\sum_{j-i\geq\kur}^{R}p_{j-i}(d)\leq R^2 (2^{-N}\comnd+2^{-4N})\leq C_1 A_{d,\eta}(N).
\end{equation*}
Next, we estimate the variance of the sum (\ref{ugras})
\begin{align}
 \label{denet}\text{Var}[\sum_{j-i\geq \kur}^{R}&\mathbf{1}\{D_{ij}=d\}]=\sum_{\begin{array}{l}j_1-i_1\geq \kur\\j_2-i_2\geq \kur\end{array}}^{R}P(D_{i_1j_1}=D_{i_2 j_2}=d)-P(D_{i_1j_1}=d)P(D_{i_2 j_2}=d).
\end{align}

We can suppose that $i_1\leq i_2$. Note that if $i_1<j_1\leq i_2<j_2$ the right-hand side of (\ref{denet}) is zero. Hence, the only non-zero cases are when $i_1\leq i_2\leq j_1\leq j_2$ or $i_1\leq i_2\leq j_2\leq j_1 $. Let us consider the first case only since the second case can be done similarly. If $i_2-i_1\geq \kur$ or $j_2-j_1\geq \kur$, by Theorem \ref{mathew} the difference of probabilities in (\ref{denet}) is less than $2^{-4N}$. Hence, the sum in (\ref{denet}) over such couples is bounded by $R^2 2^{-4N}$ which is less than $N^{-3} A_{d,\eta}^2(N)$ for any $d$ and $\eta$.

Now, if $i_2-i_1<\kur$ and $j_2-j_1<\kur$ then by Theorem \ref{mathew}
\begin{equation}
 P(D_{i_1j_1}=D_{i_2 j_2}=d)\leq C2^{-N}\comnd.
\end{equation}
\newcommand{\ep}{\epsilon}

Now we investigate two separate cases. The first case is $||d||\leq (1-\ep\an)N/2$. For such $d$ using (\ref{berrat}) of the previous section and the reasoning in the proof of Lemma \ref{guzelyaz} we can conclude that for $N$ large enough
\begin{equation*}
 2^{-N}\comnd\leq C\exp\left\{-I\left((1-\an\ep)\frac{N}{2}\right)\right\}\leq C \exp\left(-c\ep^2\antk N\right),
\end{equation*}
for some $c$ independent of $\ep$. Thus, for any $\eta>0$ the right hand side of (\ref{denet}) is bounded above by $K_N^2 R^2 \exp\left(-c\ep^2\antk N\right)$ which asymptotically smaller than $N^{-3}A_{d,\eta}^{2}(N)$.

For the second case; $||d||\leq (1-\ep\an)N/2$, note that we have $|d-N/2|\leq \an\ep/2$. For such $d$
\begin{align*}
 2^{-N}\comnd&\geq C\exp\left\{-I\left((1-\an\ep)\frac{N}{2}\right)\right\}{N^{-1/2}}\geq C\exp(-c\antk\ep^2 N)N^{-1/2}.
\end{align*}
Hence, since
$
 \exp(c\antk\ep^2 N)R^2=C\an^{-2}N\exp(\antk(\beta^{-2}-c\ep^2) N),
$
 for $\ep$ small enough we have
$
 2^{-N}\comnd \gg N^8 R^{-2}.
$ As a result, the right hand side of (\ref{denet}) is bounded above by $C K_N^2 R^2 2^{-N}\comnd$ and we have
\begin{equation*}
C K_N^2 R^2 2^{-N}\comnd\ll N^{-3}R^4 2^{-2N}\comnd^2\leq N^{-3}A_{d,\eta}^2(N).
\end{equation*}
Hence, we have showed that
\newcommand{\ande}{A_{d,\eta}}
\begin{equation*}
 E[ \sum_{j-i\geq \kur}^{R}\mathbf{1}\{D_{ij}=d\}]\leq C_1 A_{d,\eta}(N),\hspace{0.1in}\text{Var}[\sum_{j-i\geq \kur}^{R}\mathbf{1}\{D_{ij}=d\}]\leq N^{-3}A_{d,\eta}^2(N),
\end{equation*}
for $N$ large enough. Thus, by Borel-Cantelli Lemma, for any $d\in\{0,\dots,N\}$ and $\eta>0$, sum over couples $(i,j)$ with $j-i\geq \kur$ is $\mathcal{Y}$ a.s. less than the right-hand side of (\ref{cimbomik}).

Now we consider the pairs $i,j$ where $j-i\leq \kur$. We separate two cases. First case is $||d||>(\log N)^{1+\epsilon}/\antk$. Since there are at most $\kur R$ couples with $j-i<\kur$ and $\kur R\ll R\nu^{-1}\an^{-2}e^{\antk\eta||d||}$ for $\forall \eta>0$ the inequality in (\ref{cimbomik}) holds true for those couples for such $d$.

For $||d||\leq (\log N)^{1+\epsilon}/\antk $ define $\bar{K}_N$ as $\bar{K}_N=\nu\left\lceil\frac{KN^{2}\log N}{\nu}\right\rceil.$
Then we have
\begin{equation*}
 KN^2\log N\leq \bar{K}_N N^2\log N < KNN^2\log N +\nu,
\end{equation*}
and thusly,
\begin{equation*}
 K\leq \bar{K}_N < K+\frac{\nu}{N^2 \log N}.
\end{equation*}
Since $\frac{\nu}{N^2 \log N}\overset{N\to\infty}\longrightarrow 0$ we have $K_N-K\overset{N\to\infty}\longrightarrow 0$. Hence, the difference is negligible and we will still use $K_N$ for $\bar{K}_N N^2 \log N$. Note that this way $K_N$ is a multiple of $\nu$.

For summation on (\ref{cimbomik}) over the pairs $j-i<K_N$ we have
\begin{equation}\label{kartal}
 \sum_{\underset{j-i<K_N}{\lfloor i/\nu \rfloor\not=\lfloor j/\nu\rfloor}}^{R}\mathbf{1}\{\ydij=d\}\leq\sum_{k=0}^{\lceil K_N\rceil}\sum_{l=0}^{\lceil R/K_N \rceil}\sum_{m=j_k}^{\lceil \kur\rceil}\mathbf{1}\{D_{lK_N+k,lK_N+k+m}=d\},
\end{equation}
where $j_k$ is the smallest integer such that
$
 \left\lfloor \frac{lK_N+k}{\nu}\right\rfloor\not= \left\lfloor\frac{lK_N+k+j_k}{\nu}\right\rfloor,
$
which does not depend on $l$. Define the random variables $Z_l(j,d)$ as
\begin{equation*}
 Z_l(j,d)=\frac{1}{\lceil\kur\rceil}\sum_{m=j}^{\lceil \kur\rceil}\mathbf{1}\{D_{lK_N+k,lK_N+k+m}=d\}.
\end{equation*}
Note that $(Z_l(j,d),\;l\in\mathbb{N})$ is an i.i.d. sequence of random variables in $[0,1]$ for fixed $j,k$ and $d$.
\newcommand{\dd}{||d||}
\newcommand{\lepn}{(\log N)^{1+\ep}/\antk}

Let $E_N:=\{d:\dd < \lepn ,\; d \geq N/2\}$. Denote by $\mathbf{1}$ the vertex on the hypercube with all coordinates is equal to 1. Define $T_{\bf 1}=\min\{k\geq 1:\;\yn(k)={\bf 1}\}$. Let $z_d$ be any vertex of the hypercube with $\dist(z_d,\mathbf{1})=d$. For $d\in E_N$
\begin{equation}\label{cisse}
 \mathbb{P}[Z_l(j_k,d)>0]\leq \comnd \mathbb{P}_{z_d}(T_{\mathbf{1}}< K_N) \leq \comnd e^{\lambda K \log(N)}\mathbb{E}_{z_d}[e^{-\lambda T_{\mathbf{1}}/N^{2}}].
\end{equation}
According to Lemma 3.4 of \cite{cerg},
\begin{equation}
 \mathbb{E}_{z_d}[\exp(-\lambda T_{\mathbf{1}}/m(N))]\leq (2^{-N}m(N)\lambda^{-1}+\xi_N(d))(1+o(1))
\end{equation}
for $N\log(N)\ll m(N)\ll 2^N$, with $\xi_N(d)=2^{-N}\frac{N}{2}\comnd^{-1}\sum_{j=1}^{N-d}{{N}\choose{d+j}}\frac{1}{j}$.
Here we take $m(N)=N^2$. Since for $d\in E_N$, $N/2\leq d$ and $N-d\leq \lepn$ we have
\begin{align*}
 \xi_N(d)\leq 2^{-N}\frac{N}{2}\comnd^{-1} \sum_{j=1}^{N-d}{{N}\choose{d+j}}\leq 2^{-N}\frac{N}{2}\comnd^{-1}(N-d)\comnd\leq C 2^{-N} N \frac{(\log N)^{1+\epsilon}}{\antk}.
\end{align*}
Hence, for those $d$, for any $\epsilon'>0$ small and $N$ large enough
$
 N^{\lambda K}\mathbb{E}_d[e^{-\lambda T_{\mathbf{1}}/N^{2}}]\leq 2^{-N(1-\epsilon')},
$ and by (\ref{cisse})
\begin{equation}
 \mathbb{P}[Z_{l,k}(j_k,d)>0]\leq {{N}\choose{\lceil\lepn\rceil}} 2^{-N(1-\epsilon')}.
\end{equation}
Hence, the probability of the right-hand of (\ref{kartal}) is bounded above by
\begin{equation}
 \label{anindaiki}\mathbb{P}\left(\sum_{k=0}^{\lceil K_N\rceil}\sum_{l=0}^{\lceil R/K_N \rceil}Z_l(k,d)>0\right)\leq
R {{N}\choose{\lceil\lepn\rceil}} 2^{-N(1-\epsilon')}.
\end{equation}
Note that since $\lepn\ll N$ we have
\begin{equation}
 2^{-N}{{N}\choose{\lceil\lepn\rceil}}\ll \exp(-cN),
\end{equation}
for some constant $c>0$. Also, $R= C N^{1/2} \an^{-1}\exp(\an^2\beta^{-2} N/2)$. As a result we have
\begin{equation}
 (\ref{anindaiki})\leq 2^{-\epsilon'' N},
\end{equation}
for some $\epsilon''>0$. Hence, by Borel-Cantelli those $d$ are not even found by the random walk and satisfy inequality (\ref{cimbomik}) for any $\eta>0$.

Now for the case $d\leq \lepn$ we look at two separate cases: $j_k\leq 2d$ and $j_k>2d$. For the first case note that the number of $k$'s in $\{1,\dots,\kur\}$ s.t. $j_k\leq 2d$ is at most $\kur d/\nu$ . Also note that then $Z_{l}(j_k,d)\leq Z_l(0,d)$. Using the fact
\begin{equation*}
 \frac{1}{NK_N}\leq E[\sum_{i=1}^{K_N}\mathbf{1}\{D_{1i}=d\}]\leq C\frac{1}{K_N},
\end{equation*}
we get
\begin{equation*}
 C_1 \frac{1}{N\kur}\leq E[Z_l(0,d)]\leq C_2 \frac{1}{\kur}.
\end{equation*}
Thus, by Hoeffding's inequality
\begin{align*}
 P(\sum_{l=0}^{\lceil \rn/{K_N}\rceil}Z_l(0,d)\geq 2 \frac{\rn}{K_N}E[Z_l(0,d)])&\leq \exp(-2\rn \nu^{-1}E[Z_l(0,d)])\\&\leq \exp(-C \rn \nu^{-1} \frac{1}{K_N}),
\end{align*}
which decreases at least exponentially. Hence, by Borel-Cantelli Lemma we have
\begin{equation*}
 \kur \sum_{l=0}^{\lceil \rn/{K_N}\rceil}Z_l(0,d) \leq 2 \kur \frac{\rn}{K_N}E[Z_l(0,d)]) \leq C \frac{\rur}{\kur},
\end{equation*}
for $k$ s.t. $j_k\leq 2d$.

Now we consider $k$ s.t. $j_k\geq 2d$. Note that for $j\geq 2d$ we have $Z_{l}(j,d)\leq Z_{l}(d+6,d)$ for $N$ large enough and
\begin{equation}
 C_1 N^{-6}\leq \kur E[Z_{l}(d+6,d)]\leq C_2 N^{-3}.
\end{equation}
Hence, by Hoeffding's inequality, for $k$ s.t. $j_k\geq 2d$
\begin{equation}
 P(\sum_{l=0}^{\lceil\rn/{K_N}\rceil}Z_{l}(j_k,d)\geq \frac{\rn}{N^3 K_N})\leq \exp(-C \frac{\rn}{K_N}N^{-6}),
\end{equation}
which decreases at least exponentially with $N$. Hence, by Borel-Cantelli Lemma a.s.
\begin{equation}
 \kur \sum_{l=0}^{\lceil \rn/{K_N}\rceil}Z_{l}(j_k,d) \leq C\frac{\kur \rn}{N^{3}K_N}\leq C \frac{\rur}{N^3},
\end{equation}
for all $N$ large enough. Hence, summing over $k$ we get
\begin{align}\label{essekk}
 \sum_{k=0}^{\lceil \kur \rceil}\kur \sum_{l=0}^{\lceil \rn/{K_N}\rceil}Z_l(0,d)&\leq \frac{\kur d }{\nu}\frac{\rur}{\kur}+C\frac{\rur}{N^3}
\leq C \left\{\frac{d\rn}{\nu} + \frac{\rur}{N^3}\right\}.
\end{align}
Now, since $N^{3}\gg \nu\antk$ for any $\eta>0$ there exists a constant $C$ s.t. for $N$ large enough
\begin{equation*}
 \frac{\rur}{N^3}\leq C \frac{\rn e^{\eta\antk||d||}}{\nu\antk}.
\end{equation*}
Recall that since $d\leq \lepn$ we have $||d||=d$. For any $\eta>0$ we can find a constant $C$ s.t.
\begin{equation*}
 x\leq Ce^{\eta x}, \;\forall x\geq 0.
\end{equation*}
Using this fact with $x=d\antk$ we can conclude that for any $\eta>0$ given there exists a constant $C$ s.t.
\begin{equation*}
 \frac{d\rn}{\nu}\leq C \frac{\rn e^{\eta\antk||d||}}{\nu\antk}.
\end{equation*}
Hence, for any $\eta>0$ (\ref{essekk}) is bounded above by the right-hand sight of (\ref{cimbomik}) for all large enough $N$ with a large enough constant $C$. This finishes the proof of inequality (\ref{cimbomik}) and hence, the proof of Lemma \ref{aslanimik}.
\end{proof}

\section{Proofs of Theorem 1 and Theorem 2}\label{sectionfive}

In this section we prove Theorems \ref{theop22} and \ref{theop1}. We will first prove Theorem \ref{theop1}, that is, we will prove that, under the non-linear normalization of Theorem \ref{theop1}, the maximal and the clock processes converge to the same extremal process on the space $D([0,T],\R)$ quipped with the $M_1$ topology. Therefore, we start this section by recalling the definitions and basic properties of the extremal processes and the $M_1$ topology.

\subsection{Extremal processes}
Consider a probability distribution function $F(x)$. Define a family of finite dimensional distributions $F_{t_1,\dots,t_k}(x_1,\dots,x_k)$ for $k\geq 1$, $0<t_1\leq\cdots\leq t_l$ and $x_i\in\mathbb{R}$ by
\qw\label{family}
F_{t_1,\dots,t_l}(x_1,\dots,x_l)=F^{t_1}(\wedge_{i=1}^l x_i)\;F^{t_2-t_1}(\wedge_{i=2}^k x_i)\cdots F^{t_l-t_{l-1}}(x_l),
\qwe
where $\wedge$ stands for minimum. The family (\ref{family}) forms a consistent family of finite dimensional distributions. Hence, by Kolmogorov's extension theorem there exists a continuous time stochastic process $(Y(t),\;t>0)$ with finite dimensional distributions given by (\ref{family}). $(Y(t),\;t>0)$ is called the extremal process generated by $F$ or $F$-extremal.

We will consider the probability distribution $G_{\beta}(x)$ given by
\qw\label{gdist}
G_{\beta}(x)=\left\{\begin{array}{ll}\exp(-1/x^{1/\beta^2})&x>0,\\0&x\leq 0.\end{array}\right.
\qwe
Since the support of $G_{\beta}$ is non-negative numbers, we can extend the extremal process $(Y_{\beta}(t),t>0)$ generated by $G_{\beta}$ to $(Y_{\beta}(t),t\geq0)$ by defining $Y_{\beta}(0)=0$ for all realizations. Thus, by $(ii)$ and $(iii)$ of Proposition 4.7 on page 180 of \cite{res}, $(Y_{\beta}(t),\;t\geq 0)$ has a version in $D([0,\infty),[0,\infty))$, the space of non-negative c\`{a}dl\`{a}g functions on $[0,\infty)$. For the rest of the paper we will call $(Y_{\beta}(t),t\geq0)$ the extremal process generated by $G_{\beta}$ or $G_{\beta}$-extremal where $G_{\beta}$ is given by (\ref{gdist}).

Note that in order to check that a stochastic process $(Y(t),t\geq 0)$ has the finite dimensional distributions of the $G_{\beta}$-extremal process it is enough to check that a.s. $Y(0)=0$ and $Y(t)$ is non-decreasing,
 and for any $l\geq 1$, $0=t_0< t_1\leq\cdots\leq t_l$ and $0<x_1\leq \cdots \leq x_l$
\qw\label{extrdist}
\mathbb{P}(Y(t_1)\leq x_1,\cdots,Y(t_l)\leq x_l)=\prod_{k=1}^{l}\exp\left(-\frac{t_k-t_{k-1}}{x_k^{1/\beta^2}}\right).
\qwe

\subsection{$J_1$ and $M_1$ topologies}

Let $D=D([0,T],\mathbb{R})$; the space of c\`{a}dl\`{a}g functions. The usual Skorohord $J_1$ topology is given by the metric $d_{J_1}$ where
\qw
d_{J_1}(f,g)=\inf_{\lambda\in\Lambda}\{||\lambda-I||_{\infty}\vee||f\circ\lambda-g||_{\infty}\}.
\qwe
Here $\Lambda$ is the set of strictly increasing functions from $[0,T]$ onto $[0,T]$ that are continuous with a continuous inverse, and $I$ is the identity map on $[0,T]$.

The $M_1$ topology is also given by a metric, $d_{M_1}$. For $f\in D$ we define its completed graph $\Gamma_f$ by
\qw
\Gamma_f:=\{(t,z)\in [0,T]\times [0,\infty):\;z=\alpha f(t-)+(1-\alpha)f(t) \text{ for some }\alpha\in[0,1]\}.
\qwe
We can order points of $\Gamma_f$ as follows: $(t_1,z_1)\leq (t_2,z_2)$ if either i) $t_1<t_2$ or ii) $t_1=t_2=t$ and $|f(t-)-z_1|\leq |f(t-)-z_2|$. Let $\Pi_f$ be the set of nondecreasing continuous functions $(r,u)$ from $[0,1]$ onto $\Gamma_f$, with $r$ being the time component and $u$ being the spatial component. Here $(r,u)$ is nondecreasing for the order on $\Gamma_f$ we have just defined. Than the metric $d_{M_1}$ is given as follows:
\qw
d_{M_1}(f_1,f_2)=\inf\{||u_1-u_2||_\infty\vee ||r_1-r_2||_\infty:\;(r_1,u_1)\in\Pi_{f_1},(r_1,u_2)\in\Pi_{f_2}\}.
\qwe
It is easy to see that $d_{M_1}(f_1,f_2)\leq d_{J_1}(f_1,f_2)$ for all $f_1,f_2\in D$. On the other hand, $M_1$ topology is weaker than the $J_1$ topology. As an example consider the sequence of functions
\qw
f_n={\bf 1}\{[1-1/n,1)\}+2{\bf 1}\{[1,T]\}.
\qwe
$f_n$ converges to $f=2{\bf 1}\{[1,T]\}$ in $M_1$ topology but does not convergence in $J_1$ topology.

For tightness characterizations we need the following definitions:
\begin{align}
&w_f(\delta)=\sup\{\min(|f(t_1)-f(t)|,|f(t)-f(t_2)|):t_1\leq t \leq t_2\leq T, t_2-t_1\leq\delta\},\\
&w'_f(\delta)=\sup\{\inf_{\alpha \in [0,1]}|f(t)-(\alpha f(t_1)+(1-\alpha)f(t_2))|:t_1\leq t \leq t_2\leq T, t_2-t_1\leq \delta\},\\
& v_f(t,\delta)=\sup \{|f(t_1)-f(t_2)|:t_1,t_2\in [0,T]\cap (t-\delta,t+\delta)\}.
\end{align}
The following is from Theorem 12.12.3 of \cite{whi} and Theorem 15.3 of \cite{bil}.
\begin{theorem}\label{gottokiki}
The sequence of probability measures $\{P_n\}$ on $D([0,T],\mathbb{R})$ is tight in the $J_1$-topology if

(i) For each positive $\epsilon$ there exists $c$ such that
\begin{equation}
 P_n[f:||f||_\infty> c]\leq \epsilon, \hspace{0.4in} n\geq 1
\end{equation}

(ii) For each $\epsilon>0$ and $\eta >0$, there exists a $\delta$, $0<\delta<T$, and an integer $n_0$ such that
\begin{equation}\label{budane}
 P_n[f: w_f(\delta)\geq \eta]\leq \epsilon, \hspace{0.4in} n\geq n_0
\end{equation}
and
\begin{equation}
 P_n[f:v_f(0,\delta)\geq \eta]\leq \epsilon \; \text{and} \; P_n[f:v_f(T,\delta)\geq \eta]\leq \epsilon, \;\; n\geq n_0
\end{equation}
The same claim holds for the $M_1$ topology with $w_f(\delta)$ in (\ref{budane}) is replaced by $w_f'(\delta)$.
\end{theorem}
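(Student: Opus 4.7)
The plan is to prove this via the standard characterization of relatively compact sets in $D([0,T],\mathbb{R})$. Recall that for the $J_1$ topology, a set $A\subset D$ has compact closure if and only if it is uniformly bounded and the three moduli $w_f(\delta)$, $v_f(0,\delta)$, $v_f(T,\delta)$ all tend to zero uniformly in $f\in A$ as $\delta\to 0$; for the $M_1$ topology the analogous statement holds with $w_f$ replaced by $w'_f$. I would take these two compactness results as classical black boxes, drawn from \cite{bil} and \cite{whi} respectively, and focus on the reduction of tightness to a diagonal construction based on hypotheses (i) and (ii).

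The diagonal argument proceeds as follows. Fix $\epsilon>0$. Hypothesis (i) produces $c$ with $P_n\{\|f\|_\infty>c\}\leq\epsilon/2$ for every $n$. For each integer $k\geq 1$, hypothesis (ii) applied with $\eta=1/k$ and tolerance $\epsilon/2^{k+3}$, separately to each of $w_f$, $v_f(0,\cdot)$, $v_f(T,\cdot)$, yields $\delta_k>0$ and an index $n_k$ such that, for $n\geq n_k$, each of the three events $\{w_f(\delta_k)\geq 1/k\}$, $\{v_f(0,\delta_k)\geq 1/k\}$, $\{v_f(T,\delta_k)\geq 1/k\}$ has $P_n$-probability at most $\epsilon/2^{k+3}$. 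Define
\[
K=\{f:\|f\|_\infty\leq c\}\cap\bigcap_{k\geq 1}\{f: w_f(\delta_k)<1/k,\; v_f(0,\delta_k)<1/k,\; v_f(T,\delta_k)<1/k\}.
\]
By the compactness characterization $\overline K$ is compact, while a union bound gives $P_n(\overline K)\geq 1-\epsilon$ uniformly for $n$ large. The finitely many remaining $n$ are handled by inner regularity of each individual $P_n$ on the Polish space $D$, enlarging $K$ by finitely many compact sets. The $M_1$ version follows by the same argument after substituting $w'_f$ for $w_f$ and invoking the $M_1$-compactness characterization.

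The main obstacle is the compactness characterization itself, which is not elementary in the $M_1$ case. One must work with the parametric representations $(r,u)\in\Pi_f$ of the completed graph $\Gamma_f$: uniform boundedness combined with uniform smallness of $w'_f(\delta)$ forces the family of parametrizations to admit equicontinuous selections, and Arzel\`a--Ascoli then produces convergent subsequences in sup norm on $[0,1]$, which correspond to $M_1$-convergent subsequences of the underlying c\`adl\`ag functions. The $J_1$ analogue is more transparent because $w_f$ directly governs oscillations modulo a single intermediate value, matching the notion of a single jump absorbed by a Skorohod time change $\lambda\in\Lambda$. Since both compactness results are worked out in detail in \cite{bil} and \cite{whi}, the proof here amounts to citation plus the diagonal construction above.
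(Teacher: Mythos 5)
The paper does not prove this statement: it is taken verbatim from Billingsley (Theorem 15.3) and Whitt (Theorem 12.12.3), and the paper simply cites those sources. So there is no ``paper proof'' to compare against; what you have written is an attempt to derive the tightness criterion from the more primitive compactness characterization, which is indeed the standard route and is essentially what those references do.

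There is, however, a genuine gap in your diagonal construction. You pick, for each $k$, a threshold $n_k$ so that the $k$-th modulus event has $P_n$-probability at most $\epsilon/2^{k+3}$ once $n\ge n_k$, and you then assert that ``a union bound gives $P_n(\overline K)\ge 1-\epsilon$ uniformly for $n$ large,'' with the ``finitely many remaining $n$'' repaired by inner regularity. But the sequence $(n_k)_k$ need not be bounded, and hypothesis (ii) gives no control on how $n_0$ depends on $(\epsilon,\eta)$. If $n_k\to\infty$, then for \emph{every} fixed $n$ there are infinitely many $k$ with $n_k>n$, and the union bound $\sum_k P_n[\text{$k$-th condition fails}]$ has an infinite tail of uncontrolled terms, so you never reach a single $n$-threshold beyond which $P_n(\overline K)\ge 1-\epsilon$. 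In particular the set of ``remaining $n$'' may well be all of $\N$, and the inner-regularity patch does not touch the construction of $K$ itself. The standard repair happens earlier: after obtaining $(\delta_k,n_k)$ from hypothesis (ii), shrink $\delta_k$ further so that the bound $P_n[w_f(\delta_k)\ge 1/k]\le \epsilon/2^{k+3}$ (and the two endpoint analogues) also holds for each of the finitely many $n<n_k$. This is possible because, for each fixed $n$ and fixed $f\in D$, $w_f(\delta)\downarrow 0$ and $v_f(0,\delta),v_f(T,\delta)\downarrow 0$ as $\delta\downarrow 0$ by right-continuity and the existence of left limits, so $P_n[w_f(\delta)\ge 1/k]\to 0$ as $\delta\to 0$ by bounded convergence. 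With that adjustment each modulus bound holds for \emph{all} $n\ge 1$, the union bound closes for every $n$, and $K$ (being uniformly bounded with all three moduli tending to zero uniformly on $K$ as $\delta\to 0$, since they are monotone in $\delta$) is relatively compact by the cited Arzel\`a--Ascoli characterization. The remainder of your outline, including the reduction to the $M_1$ compactness criterion via parametric representations of completed graphs, is sound.
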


\subsection{Proof of Theorem \ref{theop1}} We will first prove the convergence of the maximal process and then prove that the clock process is dominated by the maximal process. For the former, we start with proving the convergence of the finite
dimensional distributions to (\ref{extrdist}) of the extremal process $Y_\beta$, using the comparison results of Section 3.

Let us define
\begin{equation}
t_k(N)=\lfloor t_k\rn\rfloor -1,\;\;k=1,\dots,l.
\end{equation}

\begin{proposition}\label{cukkafa}
For every sequence $\{t_k\}$ and $\{x_k\}$ i.e. $0=t_0< t_1\leq t_2\leq \dots \leq t_l=T$ and $0< x_1 \leq x_2\leq \dots\leq x_l$, under the assumptions of Theorem \ref{theop1}, $\mathcal{Y}$ a.s.
\begin{align}
\label{hausdorff} \mathbb{P}(\max_{i\leq t_1(N)} {\xni}\leq C_N(x_1),\dots,\max_{i\leq t_l(N)}{\xni}\leq C_N(x_l)|\mathcal{Y})\overset{N\to\infty}\longrightarrow \prod_{k=1}^{l}\exp\left(-\frac{(t_{k}-t_{k-1})K}{x_k^{1/\beta^2}}\right),
\end{align}
where $K=2\beta^{-2} p$ is as in Proposition \ref{esasli}.
\end{proposition}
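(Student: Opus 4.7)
The plan is to transfer the joint-maximum probability from the correlated process $X_N^0$ to the block-independent auxiliary process $X_N^1$ via the two comparison statements of Proposition \ref{berker}, and then exploit block independence to turn the joint probability into the product of Fr\'{e}chet-type factors predicted by Proposition \ref{esasli}.

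First I would handle the auxiliary process. Because $C_N(x)$ is increasing in $x$ and the $x_k$'s are ordered, the joint event $\{\max_{i\leq t_k(N)}X_N^1(i)\leq C_N(x_k),\;k=1,\dots,l\}$ is equivalent, for every block $B_m=[(m-1)\nu+1,m\nu]$ entirely contained in some time interval $(t_{k-1}(N),t_k(N)]$, to the requirement $\max_{i\in B_m}X_N^1(i)\leq C_N(x_k)$; at most $l$ boundary blocks straddle the $t_k(N)$'s. Since $X_N^1$ is block-independent, Proposition \ref{esasli} gives the single-block asymptotics
\[
\mathbb{P}\Bigl(\max_{i=1}^{\nu} U_i\leq C_N(x_k)\Bigr)=1-(1+o(1))\frac{\nu K}{\rn x_k^{1/\beta^2}},
\]
and the number of full blocks inside $(t_{k-1}(N),t_k(N)]$ is $(t_k-t_{k-1})\rn/\nu+O(1)$. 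Taking the product, and absorbing the $O(1)$ boundary blocks (each contributing a factor $1-o(1)$) into the error, yields
\[
\mathbb{P}\Bigl(\max_{i\leq t_k(N)} X_N^1(i)\leq C_N(x_k),\;\forall k\Bigr)\xrightarrow{N\to\infty}\prod_{k=1}^{l}\exp\!\Bigl(-\tfrac{(t_k-t_{k-1})K}{x_k^{1/\beta^2}}\Bigr).
\]

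Next I would transfer this limit to $X_N^0$, separately for the two inequalities. The lower bound on the liminf of $\mathbb{P}(X_N^0\text{ joint event}\mid\mathcal{Y})$ follows immediately from Proposition \ref{berker}(ii), which provides $\mathcal{Y}$-almost surely the one-sided comparison $\mathbb{P}(X_N^0\text{ joint}\mid\mathcal{Y})\geq \mathbb{P}(X_N^1\text{ joint})-o(1)$. For the upper bound I would bring in the re-sampled set $w_\rho$. Since $w_\rho\subset\mathbb{N}$, restricting the maxima to $w_\rho$ can only enlarge the event, so
\[
\mathbb{P}\bigl(X_N^0\text{ joint}\bigm|\mathcal{Y}\bigr)\leq \mathbb{P}\bigl(\text{joint event for }X_N^0\text{ on }w_\rho\bigm|\mathcal{Y}\bigr).
\]
Proposition \ref{berker}(i) replaces the right-hand side, up to an $o(1)$ depending on $\rho$, by the joint-event probability for $X_N^1$ on $w_\rho$. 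By the block independence of $X_N^1$ and the independence across blocks of the Bernoulli variables defining $w_\rho$, this factors over blocks exactly as before, and Lemma \ref{onder} provides a two-sided control with constants $C(\rho)$ and $1$, giving
\[
\limsup_{N\to\infty}\mathbb{P}\bigl(X_N^0\text{ joint}\bigm|\mathcal{Y}\bigr)\leq \prod_{k=1}^{l}\exp\!\Bigl(-\tfrac{C(\rho)(t_k-t_{k-1})K}{x_k^{1/\beta^2}}\Bigr).
\]
Since $C(\rho)\to 1$ as $\rho\to\infty$ by Lemma \ref{onder}, and $\rho$ can be taken arbitrary on the left-hand side (which does not depend on $\rho$), letting $\rho\to\infty$ matches the lower bound and proves the claimed convergence.

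The main technical obstacle is the interplay between the scales $\nu$, $\rn$ and $\an$: one must verify that boundary blocks and the truncation $t_k(N)=\lfloor t_k\rn\rfloor-1$ produce only negligible corrections, that the approximation $(1-\nu K/(\rn x^{1/\beta^2}))^{(t_k-t_{k-1})\rn/\nu}\to \exp(-(t_k-t_{k-1})K/x_k^{1/\beta^2})$ is uniform in $k$ and in the $x_k$'s on compact subsets of $(0,\infty)$, and that the $\mathcal{Y}$-almost sure $o(1)$ error in Proposition \ref{berker} is genuinely joint in the $l$ thresholds. Crucially, because the comparison in Proposition \ref{berker}(i) costs a factor $\rho^2\an^4$, the parameter $\rho$ cannot be let go to infinity simultaneously with $N$; it must be sent to $\infty$ \emph{after} the limit $N\to\infty$, which is precisely why Lemma \ref{onder} is stated at fixed $\rho$ with the quantitative constants $C(\rho)\uparrow 1$.
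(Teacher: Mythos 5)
Your proposal is correct and follows essentially the same argument as the paper: establish the limit for the block-independent process $X_N^1$ via Proposition \ref{esasli}, obtain the lower bound on $X_N^0$ directly from Proposition \ref{berker}(ii), obtain the upper bound by enlarging to the event on the re-sampled set $w_\rho$, transferring via Proposition \ref{berker}(i), and bounding the $X_N^1$ probability on $w_\rho$ with Lemma \ref{onder}, then sending $\rho\to\infty$ after $N\to\infty$. The technical cautions you flag (boundary blocks, uniformity, ordering of limits in $N$ and $\rho$) are real but handled just as you describe, so this matches the paper's proof in structure and substance.
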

\begin{proof}
We use the results of Section 2 to prove the convergence of extremal statistics of $\xnni$ both on the whole $i\in\mathbb{N}$ and on the resampled cloud $w_{\rho}$. 

Let $J_N(k):=\lfloor t_k\rn/\nu\rfloor$, $k=0,\dots,l$. Then clearly the left-hand side of (\ref{hausdorff}) is bounded above by
\begin{equation}\label{esekami}
 \pro(\max\{X_N^1(j\nu+i):j=J_N(k-1),\dots,J_N(k)-1,\;i=0,\dots,\nu\}\leq C_N(x_k):k=1,\dots,l)
\end{equation}
By block independence of $X_N^1$ and Proposition \ref{esasli}, for any $\delta>0$, for $N$ large enough 
\begin{align*}
(\ref{esekami})&= \prod_{k=1}^l \pro(\max_{i=1,\dots,\nu}X_N^1(i)\leq C_N(x_k))^{J_N(k)-J_N(k-1)}\\&
\leq \prod_{k=1}^l\left(1-\frac{(1-\delta)\nu}{\rn}\frac{K}{x_k^{1/\beta^2}}\right)^{(t_k-t_{k-1})\rn/\nu}\\&
\leq \prod_{k=1}^l\exp\left(-\frac{(t_k-t_{k-1})K(1-2\delta)}{x^{1/\beta^2}}\right).
\end{align*}
A lower bound can be achieved similarly. Hence, 
\begin{equation}
\label{mer1}\pro(\max_{i\leq t_1(N)} {\xnni}\leq C_N(x_1),\dots,\max_{i\leq t_l(N) }{\xnni}\leq C_N(x_l))\overset{N\to\infty}\longrightarrow \prod_{k=1}^{l}\exp\left(-\frac{(t_{k}-t_{k-1})K}{x_k^{1/\beta^2}}\right)
\end{equation}
Similarly, using Lemma 2 we have for $N$ large enough
\begin{equation}
\label{merm2}
\pro(\max_{\underset{i\in w_{\rho}}{i\leq t_1(N)}} {\xnni}\leq C_N(x_1),\dots,\max_{\underset{i\in w_{\rho}}{i\leq t_l(N)} }{\xnni}\leq C_N(x_l))\leq \prod_{k=1}^{l}\exp\left(-\frac{(t_{k}-t_{k-1})KC(\rho)}{x_k^{1/\beta^2}}\right).
\end{equation}

Note that $\max_{{i\leq t \rn, i\in w_{\rho}}} {\xni}$ is bounded above by $\max_{{i\leq t \rn}} {\xni}$. Hence, by (\ref{merm2}) and part $(i)$ of Proposition \ref{berker}, for any $\epsilon>0$ given we have $\mathcal{Y}$ a.s.
\qw
\mathbb{P}(\max_{i\leq t_1(N)} {\xni}\leq C_N(x_1),\dots,\max_{i\leq t_l(N)}{\xni}\leq C_N(x_l)|\mathcal{Y})\leq e^{-\frac{KC(\rho)t_1}{x_1}}\cdots e^{-\frac{KC(\rho)(t_l-t_{l-1})}{x_l}}+\epsilon
\qwe
for all $N$ large enough. On the other hand, by (\ref{mer1}) and part $(ii)$ of Proposition \ref{berker}, for $\epsilon>0$ given we have $\mathcal{Y}$ a.s.
\qw
e^{-\frac{Kt_1}{x_1}}\cdots e^{-\frac{K(t_l-t_{l-1})}{x_l}}-\epsilon\leq \mathbb{P}(\max_{i\leq t_1(N) } {\xni}\leq C_N(x_1),\dots,\max_{i\leq t_l(N)}{\xni}\leq C_N(x_l)|\mathcal{Y})
\qwe
for all $N$ large enough. Recall that by Lemma \ref{onder} we have $\lim_{\rho\to\infty}C(\rho)=1$. Hence, letting $\rho\to\infty$ and $\epsilon\to0$ finishes the proof of Proposition \ref{cukkafa}.
\end{proof}

\begin{proof}[Proof of Theorem \ref{theop1} part (i)]

Let $\rsn(\cdot)$ and $\bar{m}_N(\cdot)$ denote the rescaled clock process and maximal process, respectively, that is
\begin{equation}
 \rsn(\cdot)=\frac{S_N(\lfloor \rn\;\cdot\rfloor)}{\tn},\hspace{0.3in}\bar{m}_N(\cdot)=\frac{m_N(\lfloor \rn\;\cdot\rfloor)}{\tn}.
\end{equation}
Recalling the definition of $m_N(k)$ and the definition of $C_N(x)$ it is easy to see that
\qw
(\bar{m}_N(t))^{\an}\leq x \Longleftrightarrow \max_{i\leq \lfloor t\rn\rfloor -1}X_N^0(i)\leq C_N(x).
\qwe
Also, by definition $\bar{m}_N(0)=0$. Hence, since $(\bar{m}_N(\cdot))^{\an}$ is non-decreasing, we get the convergence of the finite dimensional distributions by Proposition \ref{cukkafa}.
\newcommand{\ep}{\epsilon}

We use the Theorem \ref{gottokiki} to check tightness. Since the process $(\bar{m}_N(\cdot))^{\an}$ is non-decreasing to check condition (i) it is enough to check that $(\bar{m}_N(T))^{\an}$ is tight. In this case, the convergence of fixed time distribution gives the desired tightness result.

Since $\bar{m}_N^{\an}$ is increasing $w_{f}(\delta)$ is 0. As a result in order to check (ii) we have to control $v_{\bar{m}_N^{\an}}(0,\delta)$ and $v_{\bar{m}_N^{\an}}(T,\delta)$. Again because of the monotonicity, controlling $v_{\bar{m}_N^{\an}}(0,\delta)$ boils down to check that $\mathbb{P}[(\bar{m}_N(\delta))^{\an}\geq \eta]\leq \epsilon$ for small enough $\delta$ and large enough $N$. However, by convergence of the fixed time distribution it is same as  checking $\mathbb{P}[Y(K\delta)\geq \eta]\leq \epsilon/2$. We have
\begin{equation}
 \mathbb{P}[Y(K\delta)\geq \eta]=1-\exp(-\frac{K\delta}{\eta^{1/\beta^2}}).
\end{equation}
Clearly for small enough $\delta$ the probability above is less than $\epsilon/2$.

Similarly controlling $v_{\bar{m}_N^{\an}}(T,\delta)$ boils down to find $\delta$ small enough so that
\begin{equation}\label{fosfos}
 \mathbb{P}[Y(K T)-Y(K (T-\delta))\geq \eta]\leq \epsilon/2.
\end{equation}
Observe that
\begin{equation}
 \mathbb{P}[Y(KT)-Y((K(T-\delta)))=0]=\int_{0}^\infty e^{-\frac{K(T-\delta)}{x^{1/\beta^2}}}e^{-\frac{K\delta}{x^{1/\beta^2}}}\frac{T-\delta}{x^{1/\beta^2+1}}\text{d}x=\frac{T-\delta}{T},
\end{equation}
then
\begin{equation*}
 \mathbb{P}[Y(K T)-Y(K (T-\delta))\geq \eta]\leq 1-\mathbb{P}[Y(KT)-Y((K(T-\delta)))=0]=\frac{\delta}{T}.
\end{equation*}
Hence, (\ref{fosfos}) follows by taking $\delta\leq T\ep/4$.
\end{proof}
\begin{proof}[Proof of Theorem \ref{theop1} part (ii)]
We start the proof by showing that the clock process is dominated by the maximal in the following sense:
\begin{lemma}\label{godone}
For any $t_0,\delta>0$ given, $\mathcal{Y}$ a.s. there exists a constant $A(t_0,\delta,c)$ s.t. for $N$ large enough
\begin{equation}
\mathbb{P}(S_N(t\rn)\leq A_N m_N(t\rn)\;\forall t \in [t_0,T]|\mathcal{Y})\geq 1-\delta,
\end{equation}
where $A_N=\an^{-2}A$.
\end{lemma}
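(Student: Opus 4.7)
The plan is a decomposition of the clock process at the scale $\tn$: write $V_i:=\exp(\beta\sqrt{N}H_N(Y_N(i)))$, so that $S_N(k)=\sum_{i<k}e_i V_i$ and $m_N(k)=\max_{i<k}V_i$, and split
\begin{equation*}
S_N(t\rn)=\underbrace{\sum_{i<t\rn}e_i V_i\mathbf{1}\{V_i\leq\tn\}}_{S_N^{s}(t\rn)}+\underbrace{\sum_{i<t\rn}e_i V_i\mathbf{1}\{V_i>\tn\}}_{S_N^{\ell}(t\rn)}.
\end{equation*}
Since $\mathcal{H}$ and $\mathcal{Y}$ are independent, conditional on $\mathcal{Y}$ each $V_i$ has marginal law $\exp(\beta\sqrt{N}Z)$ with $Z\sim N(0,1)$. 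Mills' ratio, together with the elementary identity $\mathbb{E}[V_1\mathbf{1}\{V_1\leq\tn\}]=e^{\beta^2N/2}\Phi(\an\sqrt{N}/\beta-\beta\sqrt{N})$, yields
\begin{equation*}
\mathbb{P}(V_1>\tn)\sim\frac{\beta\,e^{-\an^2N/(2\beta^2)}}{\an\sqrt{2\pi N}},\qquad \mathbb{E}[V_1\mathbf{1}\{V_1\leq\tn\}]\sim\frac{\tn\,e^{-\an^2N/(2\beta^2)}}{\beta\sqrt{2\pi N}}.
\end{equation*}
Multiplying by $T\rn$ and using the formula (\ref{anamanam}) for $\rn$ gives the two conditional mean estimates that drive the proof: $\mathbb{E}[\#\{i<T\rn:V_i>\tn\}\mid\mathcal{Y}]\sim T\an^{-2}$ and $\mathbb{E}[S_N^{s}(T\rn)\mid\mathcal{Y}]\sim T\an^{-1}\beta^{-2}\tn$.

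Apply Markov's inequality conditional on $\mathcal{Y}$ to define the good events
\begin{align*}
E_1&:=\{S_N^{s}(T\rn)\leq A_1\an^{-2}\tn\},\\
E_4&:=\bigl\{\textstyle\sum_{i<T\rn}e_i\mathbf{1}\{V_i>\tn\}\leq A_3\an^{-2}\bigr\},
\end{align*}
whose $\mathcal{Y}$-conditional probabilities exceed $1-T\an/(A_1\beta^2)(1+o(1))$ and $1-T/A_3(1+o(1))$ respectively. Taking $A_3=6T/\delta$ and $A_1$ any fixed positive constant makes both $\geq 1-\delta/3$ for $N$ large. For the lower bound on $m_N$, Proposition \ref{cukkafa} at time $t_0$ with level $x>0$ gives $\mathcal{Y}$-a.s.\ $\mathbb{P}(m_N(t_0\rn)\geq\tn\,x^{1/\an}\mid\mathcal{Y})\to 1-e^{-Kt_0/x^{1/\beta^2}}$; choose $x=x(t_0,\delta)$ small enough that this limit exceeds $1-\delta/3$ and call the event $E_3:=\{m_N(t_0\rn)\geq\tn\,x^{1/\an}\}$.

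On $E_1\cap E_3\cap E_4$, the elementary bound $S_N^{\ell}(t\rn)\leq m_N(t\rn)\sum_{i<t\rn}e_i\mathbf{1}\{V_i>\tn\}$ and monotonicity in~$t$ of the exponential-count sum and of~$S_N^{s}$ give, for every $t\in[t_0,T]$,
\begin{equation*}
S_N(t\rn)\leq A_1\an^{-2}\tn+A_3\an^{-2}m_N(t\rn),
\end{equation*}
while monotonicity of $m_N$ yields $m_N(t\rn)\geq m_N(t_0\rn)\geq \tn\,x^{1/\an}$, so that $\tn\leq x^{-1/\an}m_N(t\rn)$ and $S_N(t\rn)\leq(A_1 x^{-1/\an}+A_3)\an^{-2}m_N(t\rn)$. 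The main technical obstacle is that Proposition \ref{cukkafa} only delivers $\mathbb{P}(m_N(t_0\rn)\geq\kappa\tn\mid\mathcal{Y})\to 1-e^{-Kt_0}$ for any fixed $\kappa>0$, so the threshold $x^{1/\an}$ must vanish as $N\to\infty$ and the naive prefactor $x^{-1/\an}$ grows with $N$; to recover the clean constant~$A$ in the statement one must refine the argument into a multi-scale peeling that partitions the possible values of $m_N(t_0\rn)$ into layers $[\tn a_{j}^{1/\an},\tn a_{j-1}^{1/\an}]$ with $a_j=a_{j-1}(1-\an)$, so that the per-layer scale ratio $(a_{j-1}/a_j)^{1/\an}\to e$ remains~$O(1)$, applies the Markov bound on each layer to obtain a per-layer comparison with loss only of order~$e$, and absorbs the residual ``very shallow'' event $\{m_N(t_0\rn)<\tn a_J^{1/\an}\}$ into $\delta/2$ by taking~$J$ large. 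Uniformity in $t\in[t_0,T]$ is then free: the small-part bound is controlled at $T$ and the lower bound on $m_N$ at $t_0$, with all other quantities monotone.
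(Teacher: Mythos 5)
You have the right template---split $S_N$ at a threshold, bound the big terms by $m_N$ times an exponential count of order $\an^{-2}$, bound the small terms by Markov, and tie a lower bound on $m_N(t_0\rn)$ to Proposition~\ref{cukkafa}---and your one-term estimates for $\mathbb{E}[V_1\mathbf{1}\{V_1\leq\tn\}]$ and $\mathbb{P}(V_1>\tn)$ are correct, as is the $E_4$/large-part step. The gap is exactly where you flag it: with the cut at the fixed level $\tn$, Markov gives $S_N^{s}\leq A_1\an^{-2}\tn$, whereas Proposition~\ref{cukkafa} only guarantees $m_N(t_0\rn)\geq\tn x^{1/\an}$ for a fixed $x<1$, leaving a diverging prefactor $x^{-1/\an}$. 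The multi-scale peeling you sketch does not close this: with $a_j=(1-\an)^j$ every fixed $j$ gives $a_j\to 1$, so the residual event $\{m_N(t_0\rn)<\tn a_J^{1/\an}\}$ has limiting conditional probability $e^{-Kt_0}$ for every fixed $J$ and cannot be made small by enlarging $J$; also, the layer event and $S_N^{s}$ depend on overlapping $V_i$'s, so the per-layer Markov bounds do not decouple. (A variant with $N$-independent $a_j$ and a per-layer re-cut of $S_N$ could work, but that is precisely a more elaborate version of what the paper does.)

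The paper's resolution is to move the cut to the natural level $\tn\epsilon^{1/\an}$, with $\epsilon=\epsilon(t_0,\delta)$ chosen small enough that Proposition~\ref{cukkafa} yields $\mathbb{P}(m_N(t_0\rn)\geq\tn\epsilon^{1/\an}\mid\mathcal{Y})\geq 1-\delta/4$ for large $N$. With this cut the conditional mean of the small part is $\leq C\,t\,\tn\an^{-2}\epsilon^{1/\an}$ (the factor $\epsilon^{1/\an}$ now appears in the estimate as well), so one Markov bound gives $S_N^{s}\leq A_2\an^{-2}\tn\epsilon^{1/\an}$ on an event of probability $\geq 1-\delta/4$; intersecting with the $m_N$ event, $\tn\epsilon^{1/\an}\leq m_N(t_0\rn)\leq m_N(t\rn)$ absorbs everything into $A_2\an^{-2}m_N(t\rn)$. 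The expected count above this higher level is still $O(\an^{-2}\epsilon^{-1/\beta^2})$, so the large part is handled exactly as in your $E_4$ step. In short, your decomposition is sound but the threshold must be the $N$-dependent, exponentially shrinking $\tn\epsilon^{1/\an}$, not $\tn$; once the cut matches the scale of $m_N$'s lower bound, no peeling is needed.
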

\begin{proof}
By Proposition \ref{cukkafa} we can choose $\epsilon>0$ small enough so that $\mathcal{Y}$ a.s.
\begin{equation}
 \pro[\frac{m_N(t_0\rn)}{\tn}\geq \epsilon^{1/\an}|\mathcal{Y}]\geq 1-\delta/4.
\end{equation}
for all $N$ large enough.
Let us denote by $B_N$ the event inside the probability above. We partition $S_N(t\rn)$ according to this $\epsilon$ as follows
\begin{align}
 \label{kazim} S_N(t\rn)&=\sum_{i=1}^{t\rn}e_i e^{\beta\sqrt{N}X_N^0(i)} \mathbf{1}\{e^{\beta\sqrt{N}X_N^0(i)}\leq \tn \epsilon^{1/\an}\}
\\&+\sum_{i=1}^{t\rn} e_i e^{\beta\sqrt{N}X_N^0(i)} \mathbf{1}\{e^{\beta\sqrt{N}X_N^0(i)}> \tn \epsilon^{1/\an}\}.
\end{align}
We have
\begin{align}
\label{essek} \mathbb{E}[\sum_{i=1}^{t\rn}\mathbf{1}\{e^{\beta\sqrt{N}X_N^0(i)}> \tn \epsilon^{1/\an}\}|\mathcal{Y}] & \leq  \sum_{i=1}^{T\rn}\pro[e^{\beta\sqrt{N}X_N^0(i)}>\tn\epsilon^{1/\an}|\mathcal{Y}]\leq   C T \an^{-2}.
\end{align}
Using (\ref{essek}) and Cheybshev inequality we get $\mathcal{Y}$ a.s.
\begin{align}
\nonumber\mathbb{P}[\sum_{i=1}^{t\rn}e_i\mathbf{1}\{e^{\beta\sqrt{N}X_N^0(i)}> \tn \epsilon^{1/\an}\}\geq \an^{-2}A_1|\mathcal{Y}]
 &\leq\mathbb{E}[\frac{\an^{2}}{A_1}\sum_{i=1}^{t\rn}\mathbf{1}\{e^{\beta\sqrt{N}X_N^0(i)}>\tn\epsilon^{1/\an}\}|
 \mathcal{Y}]\\\label{stan}&\leq \frac{CT}{A_1}.
\end{align}
Now we define the event
\begin{equation}\label{yarim}
 C_N:=\{\sum_{i=1}^{t\rn}e_i\mathbf{1}\{e^{\beta\sqrt{N}X_N^0(i)}> \tn \epsilon^{1/\an}\}\leq \an^{-2}A_1\}.
\end{equation}
Hence, using (\ref{stan}) if we choose $A_1$ large enough we have $\mathcal{Y}$ a.s.
$
 \mathbb{P}(C_N|\mathcal{Y})\geq 1-\delta/4,
$
for all $N$ large enough.
Then on $C_N$ we have
\begin{align}\label{uykubir}
 \sum_{i=1}^{t\rn} e_i e^{\beta\sqrt{N}X_N^0(i)} \mathbf{1}\{e^{\beta\sqrt{N}X_N^0(i)}> \tn \epsilon^{1/\an}\}\leq A_1 \an^{-2} m_N(t\rn).
\end{align}
Considering (\ref{kazim}) we have for $C$ large enough that does not depend on $t$ 
\newcommand{\kab}{\mathbf{1}}
\begin{align}
  \nonumber\mathbb{E}[\sum_{i=1}^{t\rn}e_i e^{\beta\sqrt{N} X_N^0(i)}\kab\{e^{\beta\sqrt{N}X_N^0(i)}\leq \tn \epsilon^{1/\an}\}|\mathcal{Y}]& =\sum_{i=1}^{t\rn}\mathbb{E}[e^{\beta\sqrt{N} Z} 
\kab\{e^{\beta\sqrt{N} Z}\leq \tn \epsilon^{1/\an}\}]\\ \label{kizak} &\leq C \tn t\an^{-2}\epsilon^{1/\an},
\end{align}
where $Z$ is a standard normal random variables. Let us define the sequence of events
\begin{equation}
 D_N:=\{\sum_{i=1}^{t\rn}e_i e^{\beta\sqrt{N} X_N^0(i)}1\{e^{\beta\sqrt{N}X_N^0(i)}\leq \tn \epsilon^{1/\an}\}\leq A_2 \tn  \an^{-2}\epsilon^{1/\an}\}.
\end{equation}
Using (\ref{kizak}) and Cheybshev inequality, we have for $A_2$ large enough $\mathcal{Y}$ a.s.
$
 \pro[D_N|\mathcal{Y}] \geq 1-\delta/4,
$ for all $N$ large enough. Note that on the intersection of $B_N$ and $D_N$ we have
\begin{equation}\label{uykuiki}
 \sum_{i=1}^{t\rn}e_i e^{\beta\sqrt{N} X_N^0(i)}1\{e^{\beta\sqrt{N}X_N^0(i)}\leq \tn \epsilon^{1/\an}\}\leq A_2  \an^{-2} m_N(t\rn),
\end{equation}
since on $B_N$ it is the case that $\tn\epsilon^{1/\an}\leq m_N(t\rn)$. Let $A=A_1+A_2$. Then (\ref{uykubir}) and (\ref{uykuiki}) finishes the proof of Lemma \ref{godone} since $\mathcal{Y}$ a.s.
$
 \mathbb{P}(B_N,C_N,D_N|\mathcal{Y})>1-\delta,
$ for all $N$ large enough.
\end{proof}

Lastly, we show that the rescaled processes non-linearly normalized by taking the $\an$th power, $(\rsn(\cdot))^{\an}$ and $(\bar{m}_N(\cdot))^{\an}$, are asymptotically close to each other in Skorohord $J_1$ distance.
\begin{lemma}\label{ozen}
 For $ \epsilon >0$ small enough $\mathcal{Y}$ a.s. for $N$ large enough
\begin{equation}
\pro[\sup_{t\in[0,T]}|\bar{S}_N(t)^{\an} - \bar{m}_N(t)^{\an}|\geq \epsilon]\leq \epsilon
\end{equation}
\end{lemma}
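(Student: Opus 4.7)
The strategy is to establish the two one-sided inequalities $\bar{S}_N(t)^{\alpha_N} - \bar{m}_N(t)^{\alpha_N} \leq \epsilon/2$ and $\bar{m}_N(t)^{\alpha_N} - \bar{S}_N(t)^{\alpha_N} \leq \epsilon/2$ separately, each with high conditional probability and uniformly in $t \in [0,T]$. Fix $t_0 > 0$ small and $M$ large so that, by Proposition \ref{cukkafa}, the event $E_0 := \{\bar{m}_N(t_0)^{\alpha_N} \leq \epsilon/8,\ \bar{m}_N(T)^{\alpha_N} \leq M\}$ has $\mathbb{P}(E_0 \mid \mathcal{Y}) \geq 1 - \epsilon/8$, $\mathcal{Y}$ a.s.\ for $N$ large.

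The upper bound follows directly from Lemma \ref{godone}: with conditional probability at least $1 - \epsilon/8$ one has $S_N(\lfloor t r(N)\rfloor) \leq A\alpha_N^{-2}m_N(\lfloor t r(N)\rfloor)$ for all $t \in [t_0,T]$, so $\bar{S}_N(t)^{\alpha_N} \leq (A\alpha_N^{-2})^{\alpha_N}\bar{m}_N(t)^{\alpha_N}$, with prefactor $(A\alpha_N^{-2})^{\alpha_N} = A^{\alpha_N}\exp(2c\alpha_N\log N) \to 1$ because $\alpha_N = N^{-c} \to 0$. Together with $\bar{m}_N(T)^{\alpha_N} \leq M$ this gives the bound on $[t_0,T]$; monotonicity of both processes and the same estimate at $t_0$ handle $[0,t_0]$ via $\bar{S}_N(t)^{\alpha_N},\bar{m}_N(t)^{\alpha_N} \leq (1+o(1))\bar{m}_N(t_0)^{\alpha_N} \leq \epsilon/4$.

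For the lower bound, let $i^*(t) := \mathrm{argmax}_{0 \leq i < \lfloor t r(N)\rfloor}\exp(\beta\sqrt{N}H_N(Y_N(i)))$. The single summand $e_{i^*(t)}\exp(\beta\sqrt{N}H_N(Y_N(i^*(t))))$ of $S_N(\lfloor t r(N)\rfloor)$ immediately gives $\bar{S}_N(t)^{\alpha_N} \geq e_{i^*(t)}^{\alpha_N}\bar{m}_N(t)^{\alpha_N}$, and therefore
\begin{equation*}
\bar{m}_N(t)^{\alpha_N} - \bar{S}_N(t)^{\alpha_N} \leq \bigl(1 - e_{i^*(t)}^{\alpha_N}\bigr)\bar{m}_N(t)^{\alpha_N}.
\end{equation*}
When $\bar{m}_N(t)^{\alpha_N} < \epsilon/2$ this is at most $\epsilon/2$; otherwise $i^*(t)$ belongs to the visible set $R_v := \{i < T r(N) : \exp(\beta\sqrt{N}H_N(Y_N(i))) \geq (\epsilon/2)^{1/\alpha_N}t(N)\}$. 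A standard Gaussian tail computation (using only the marginal law of $H_N(Y_N(i))$) gives $\mathbb{E}[|R_v| \mid \mathcal{Y}] = T r(N)\cdot\mathbb{P}(Z \geq C_N(\epsilon/2)) \sim CT\alpha_N^{-2}(\epsilon/2)^{-1/\beta^2}$, so by Markov $|R_v| \leq L$ with $L = O(\alpha_N^{-2})$ on a conditional event of probability $\geq 1 - \epsilon/8$. The family $(e_i)$ is independent of $(\mathcal{Y},\mathcal{H})$, hence of $R_v$, and $\mathbb{P}(e_i^{\alpha_N} < 1 - \epsilon/(2M)) \leq (1-\epsilon/(2M))^{1/\alpha_N} = \exp(-c_\epsilon N^c)$ for some $c_\epsilon > 0$. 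A union bound therefore yields $\mathbb{P}(\min_{i \in R_v} e_i^{\alpha_N} < 1 - \epsilon/(2M)) \leq L\exp(-c_\epsilon N^c) \to 0$, and on the good event $(1 - e_{i^*(t)}^{\alpha_N})\bar{m}_N(t)^{\alpha_N} \leq (\epsilon/(2M)) M = \epsilon/2$.

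The main obstacle is the balance in this final union bound: $|R_v|$ grows polynomially in $N$ (reflecting that in the correlated models deep traps come in clusters of size $\sim \alpha_N^{-2}$), but the non-linear $\alpha_N$-th power normalization produces the super-polynomial decay $\exp(-c_\epsilon N^c)$, which absorbs this polynomial cost for every $c \in (0,1/2)$ and hence for the full parameter range of Theorem \ref{theop1}.
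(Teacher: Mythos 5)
Your proof is correct, and it departs from the paper's argument in one substantive way that is worth flagging. The paper opens its proof by asserting that $\bar{S}_N(t)\geq\bar{m}_N(t)$ holds for all $t$, so that the absolute value can be dropped, and then it only controls the one-sided quantity $\bar{S}_N(t)^{\an}-\bar{m}_N(t)^{\an}$ (split over $[0,t_0]$ by a direct moment bound on the truncated clock, and over $[t_0,T]$ via Lemma~\ref{godone} and tightness of $\bar{m}_N(T)^{\an}$). But $\bar{S}_N\geq\bar{m}_N$ is not a deterministic fact: the term indexed by $i^*$ in $S_N$ carries an exponential factor $e_{i^*}$ which may be smaller than $1$, and nothing forces the remaining summands to compensate. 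Your proposal quietly repairs this: you keep the upper bound on $\bar{S}_N^{\an}-\bar{m}_N^{\an}$ essentially as in the paper (Lemma~\ref{godone} plus tightness of $\bar{m}_N(T)^{\an}$, with the $[0,t_0]$ piece handled again through Lemma~\ref{godone} at $t_0$ rather than by re-splitting the sum, which is a minor and equally valid simplification), and you supply the missing reverse inequality by a separate argument: the deterministic bound $\bar{S}_N(t)\geq e_{i^*(t)}\bar{m}_N(t)$, the observation that whenever $\bar{m}_N(t)^{\an}\geq\epsilon/2$ the argmax lies in the polynomially-sized visible set $R_v$ with $|R_v|=O(\an^{-2})$ (by Markov, conditionally on $\mathcal{Y}$), independence of $(e_i)$ from $(\mathcal{Y},\mathcal{H})$, and the super-polynomial decay $\mathbb{P}(e_i^{\an}<1-\epsilon/(2M))\leq\exp(-c_\epsilon N^c)$ which beats the polynomial union bound for every $c\in(0,1/2)$. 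This added lower-bound argument is what makes the statement as written (with the absolute value) actually proved; the rest matches the paper's route.

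One small caveat for the write-up: your expectation estimate $\mathbb{E}[|R_v|\mid\mathcal{Y}]\sim T\an^{-2}(\epsilon/2)^{-1/\beta^2}$ tacitly uses that, conditionally on $\mathcal{Y}$, each $H_N(Y_N(i))$ is a standard Gaussian; this is true because $\mathbb{E}[H_N(\sigma)^2]=1$, but it deserves a sentence since $(X_N^0(i))_i$ is far from i.i.d.\ in $i$. The constant implicit in $L$ then depends on $\epsilon$, $T$, $\beta$, which is harmless because it only multiplies the stretched-exponential term.
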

\begin{proof}
First note that since $\bar{S}_N(t)\geq \bar{m}_N(t)$ for all $t$ we have
\begin{equation}
|\bar{S}_N(t)^{\an} - \bar{m}_N(t)^{\an}|=\left(\bar{S}_N(t)^{\an} - \bar{m}_N(t)^{\an}\right).
\end{equation}
\newcommand{\ep}{\epsilon}
Let $t_0>0$. We partition the sum $S_N(t_0)$ as before:
\begin{align}\label{bakalim}
 S_N(t_0) &=\sum_{i=1}^{t_0\rn} e_i e^{\beta\sqrt{N} X_N^0(i)} 1\{e^{\beta\sqrt{N}X_N^0(i)}\leq \tn\ep^{1/\an}\}\\&\label{bakalimiki}
+\sum_{i=1}^{t_0\rn} e_i e^{\beta\sqrt{N} X_N^0(i)}1\{e^{\beta\sqrt{N}X_N^0(i)}> \tn\ep^{1/\an}\}.
\end{align}
As in the proof of the previous proposition we have
\begin{equation}
 \mathbb{E}[\sum_{i=1}^{t_0\rn}e_i e^{\beta\sqrt{N} X_N^0(i)} 1\{e^{\beta\sqrt{N}X_N^0(i)}\leq \tn\ep^{1/\an}\}]\leq Ct_0\tn\an^{-2}\ep^{1/\an},
\end{equation}
and as a consequence
\begin{equation}
 \pro[\sum_{i=1}^{t_0\rn}e_i e^{\beta\sqrt{N} X_N^0(i)} 1\{e^{\beta\sqrt{N}X_N^0(i)}\leq\tn \ep^{1/\an}\} > \an^{-2}\tn \ep^{1/\an}]< C t_0.
\end{equation}
Also we have as in the same proof
\begin{equation}
 \pro[\max_{i\leq t_0\rn}e^{\beta\sqrt{N}X_N^0(i)}>\tn\ep^{1/\an}]\leq 1-e^{-\frac{K t_0}{\epsilon^{1/\beta^2}}}\leq \ep.
\end{equation}
We choose $t_0$ small enough so that $Ct_0\leq \ep/2$. Then, on a set of probability less than $\ep$ we have that (\ref{bakalim}) is less than $\an^{-2}\tn \ep^{1/\an}$ and (\ref{bakalimiki}) is zero. Now if we choose $N$ large enough so that $\an^{2\an}$ close to 1 we have
\begin{equation}
 \pro[\sup_{t\in[0,t_0]}\left(\bar{S}_N(t)^{\an} - \bar{m}_N(t)^{\an}\right) >\epsilon]\leq \pro[\bar{S}_N(t_0)>\ep^{1/\an}]\leq \ep.
\end{equation}
For $t\in[t_0,T]$, using Lemma \ref{godone} there exists an $A$ such that
\begin{equation}
 \frac{S_N(t\rn)}{\tn}\leq A \an^{-2} \frac{m_N(t\rn)}{\tn} \;\; \forall t\in[t_0,T],
\end{equation}
on a set that has probability greater than $1-\ep$. On this event we have
\begin{align*}
 &\pro [\sup_{t\in(t_0,T]}(\bar{S}_N(t))^{\an}-(\bar{m}_N(t))^{\an}>\ep]\leq  \pro[\sup_{t\in(t_0,T]}\left(\left(\frac{A}{\antk}\right)^{\an}-1\right) \bar{m}_N(t)^{\an}>\ep].
\end{align*}
Note that $\left(\frac{A}{\antk}\right)^{\an}\overset{N\to\infty}\longrightarrow 1$. We choose $N$ large enough so that $\left(\left(\frac{A}{\antk}\right)^{\an}-1\right)\leq \ep^2/2$. 
Using the monotonicity of $\bar{m}_N$ and the convergence of $(\bar{m}_N)^{\an}$ to the extremal process $Y(K\cdot)$ we can conclude that up to a small error the last line above is less than
\begin{equation*}
 \pro[\ep^2Y_1(KT)>\epsilon]=1-e^{-{KT\ep^{1/\beta^2}}},
\end{equation*}
which is small for small enough $\ep$. This finishes the proof of Lemma \ref{ozen}.
\end{proof}
Now we can finish the proof Theorem \ref{theop1}. By Lemma \ref{ozen} we have
\begin{equation}
 d_{J_1}((\rsn(\cdot))^{\an},(\bar{m}_N(\cdot))^{\an})\overset{\text{p}}\longrightarrow 0\hspace{0.2in}\text{as }N\to\infty,
\end{equation}
where $\overset{\text{p}}\to$ stands for convergence in probability. This convergence in probability of the Skorohord $J_1$ distance between $\bar{S}_N^{\an}$ and $\bar{m}_N^{\an}$, and 
the convergence of $\bar{m}_N(\cdot)^{\an}$ to $Y(K\cdot)$ on $M_1$ topology finishes the proof of part (ii) of Theorem \ref{theop1}.
\end{proof}

\subsection{Proof of Theorem \ref{theop22}}
In order to prove the extremal aging result we consider the coarse grained process
\begin{equation}
 \tilde{S}_N(t)=\frac{1}{\tn}S_N(\nu\lfloor\frac{t\rn}{\nu}\rfloor),
\end{equation}
and prove that the convergence statement of Theorem \ref{theop1} holds for this process in $J_1$ topology.
\begin{proposition}\label{dayim} Under the assumptions of Theorem \ref{theop1}, $\mathcal{Y}$ a.s.
 \begin{equation}
\tilde{S}_N(\cdot)^{\an}\overset{N\to\infty}\longrightarrow Y_\beta(K\cdot)\text{      in  } D([0,T],J_1).
 \end{equation}
 where $K=2\beta^{-2}p$.
\end{proposition}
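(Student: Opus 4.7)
The plan is to establish the finite-dimensional distribution convergence and the tightness in $J_1$ separately, then conclude by Theorem \ref{gottokiki}. The entire point of the block coarse-graining is that the near-simultaneous micro-jumps of $\bar{S}_N^{\an}$ inside one valley---which forced us from $J_1$ to $M_1$ in Theorem \ref{theop1}---are now collapsed into a single discontinuity at the relevant block boundary, leaving only well-separated block-level jumps.

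For finite-dimensional distributions, set $\tilde{m}_N^{\an}(t):=(m_N(\nu\lfloor t\rn/\nu\rfloor)/\tn)^{\an}$. Since $|\lfloor t\rn\rfloor-\nu\lfloor t\rn/\nu\rfloor|\leq\nu\ll\rn$ and the limit $Y_\beta(K\cdot)$ has no fixed-time atoms, Proposition \ref{cukkafa} immediately gives convergence of the finite-dimensional distributions of $\tilde{m}_N^{\an}$ to those of $Y_\beta(K\cdot)$. The proof of Lemma \ref{ozen} carries over verbatim with $\tilde{S}_N,\tilde{m}_N$ in place of $\bar{S}_N,\bar{m}_N$ (the level-by-level partition is unaffected by rounding the summation index to the nearest multiple of $\nu$), yielding $\sup_{t\in[0,T]}|\tilde{S}_N^{\an}(t)-\tilde{m}_N^{\an}(t)|\to 0$ in probability, which transfers the finite-dimensional convergence to $\tilde{S}_N^{\an}$.

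For $J_1$ tightness I will apply Theorem \ref{gottokiki}. The uniform norm bound and the boundary moduli $v_f(0,\delta)$ and $v_f(T,\delta)$ are handled exactly as in the proof of part (i) of Theorem \ref{theop1}. The new condition to verify is $\mathbb{P}[w_{\tilde{S}_N^{\an}}(\delta)\geq\eta]\leq\epsilon$. Because $\tilde{S}_N^{\an}$ is non-decreasing and its discontinuities lie only at multiples of $\nu/\rn$, the event $\{w_{\tilde{S}_N^{\an}}(\delta)\geq\eta\}$ forces two distinct block boundaries inside some window of length $\delta$ to each raise $\tilde{S}_N^{\an}$ by at least $\eta$. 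Combined with the uniform closeness $\sup_t|\tilde{S}_N^{\an}-\tilde{m}_N^{\an}|\to 0$, this implies two distinct blocks inside the $\delta$-window, each containing some index $i$ with $X_N^0(i)\geq C_N(c\eta)$ for a positive constant $c=c(\eta,\beta)$.

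The two-block exceedance probability is bounded, via the Gaussian comparison of Theorem \ref{lead} applied jointly to the two blocks (the cross-block covariance sums being controlled by Lemma \ref{aslanimik} exactly as in the proof of Proposition \ref{berker}), by the corresponding probability for the block-independent process $X_N^1$; the latter factorises over the two blocks and is bounded by Proposition \ref{esasli} as $(C\nu K/(\rn\eta^{1/\beta^2}))^2$ per pair. A union bound over the $O((\delta\rn/\nu)^2)$ pairs of blocks per window and the $O(T/\delta)$ windows yields a total of order $T\delta/\eta^{2/\beta^2}$, which can be made less than $\epsilon$ for $\delta$ small. The main obstacle I anticipate is the two-block upgrade of Proposition \ref{berker}: the one-level comparison has to be rerun for the vector of indicators over two disjoint block regions, which is bookkeeping only, with the same error estimates $(\ref{estone})$--$(\ref{estfour})$ as in Section 3, now applied with an extra inert factor accounting for the second block.
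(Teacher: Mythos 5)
Your plan reproduces the paper's finite-dimensional step almost exactly: Proposition \ref{cukkafa} gives the fdd's of the coarse-grained maximal process, and a $\tilde{S}_N/\tilde{m}_N$ version of Lemma \ref{ozen} transfers them. For the $J_1$ tightness, however, the paper takes a genuinely different route, and I think your route, as outlined, has an unclosed gap.

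The paper does not control two-block joint exceedances at all. It defines the point measure $H_N^{\delta}$ that records which blocks have a maximum exceeding $C_N(\delta)$ and proves (Lemma \ref{kisrak}) that $H_N^{\delta}$ converges to a homogeneous Poisson process. The key observation is that Kallenberg's criterion (Prop.\ 16.17 of \cite{kal}) only requires two pieces of information: (a) the vacuity probability $\mathbb{P}[H_N^{\delta}(I)=0]$, which is literally the no-exceedance event already handled by Proposition \ref{cukkafa}; and (b) the first moment $\mathbb{E}[H_N^{\delta}(I)]$, a sum of \emph{single}-block exceedance probabilities, for which a one-sided Slepian comparison ($\Lambda^0_{ij}\geq\Lambda^1_{ij}$ within a block) plus Proposition \ref{esasli} suffices. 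The bound $\mathbb{P}[H_N^{\epsilon}(I)\geq 2]\leq\mathbb{E}[H_N^{\epsilon}(I)]-1+\mathbb{P}[H_N^{\epsilon}(I)=0]$, which is elementary for any $\mathbb{N}$-valued random variable, then gives the $O(\delta^2)$ estimate needed in the modulus-of-continuity bound. No joint exceedance probability is ever estimated.

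Your route, by contrast, needs an upper bound on $\mathbb{P}[\max_{B_1}X_N^0\geq C,\max_{B_2}X_N^0\geq C]$ that factorises (up to error) over the two blocks. This is the part you flag as ``the main obstacle'' and describe as ``bookkeeping only.'' I disagree with that assessment. Theorem \ref{lead} and Proposition \ref{berker} give one-sided comparisons for events of the form $\{\text{all coordinates below thresholds}\}$. Converting your joint exceedance into such events via inclusion-exclusion produces a linear combination of three comparison errors, and the one on $\mathbb{P}[\max_{B_1\cup B_2}X^0<C]-\mathbb{P}[\max_{B_1\cup B_2}X^1<C]$ has the ``wrong'' sign: it involves $(\Lambda^0_{ij})_+$ for cross-block pairs, precisely the pairs with $D_{ij}<N/2$. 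That is the quantity whose control over the \emph{whole} range relies on the aggregate bounds of Lemma \ref{aslanimik} (via (\ref{cimbomik}) and the estimates (\ref{esttwo})--(\ref{estthree})). Whether these aggregate bounds, which hold $\mathcal{Y}$-a.s.\ as a sum over all pairs in $[0,T\rn]$, localize usefully to individual pairs of blocks and then re-aggregate correctly under the union over $O((\delta\rn/\nu)^2\cdot T/\delta)$ pairs is exactly the nontrivial point. It may well be doable, but it has not been done, and the paper deliberately avoids it. I would recommend switching to the Poisson point process formulation: it requires only the vacuity probability (already proved) and the first-moment bound (a one-sided Slepian comparison of the kind you already have), and sidesteps the joint comparison entirely.

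One smaller imprecision: the event $\{w_{\tilde{S}_N^{\an}}(\delta)\geq\eta\}$ does not by itself force ``two distinct block boundaries to each raise $\tilde{S}_N^{\an}$ by at least $\eta$''; it forces two disjoint runs of blocks whose \emph{aggregate} increments are each $\geq\eta$, which a priori could be sums of many small jumps. Reducing this to two single-block exceedances requires the small-term cutoff (the process $\tilde{S}_{N,\epsilon}$ of the paper's proof, or equivalently the uniform closeness to $\tilde{m}_N^{\an}$ that you invoke); you should make this reduction explicit before invoking any block-pair estimate.
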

\begin{proof}
\newcommand{\ep}{\epsilon}
\newcommand{\cnep}{C_N(\epsilon)}

We first show that the traps from different blocks that are deeper than $\delta^{1/\an}\tn$ has a Poisson structure. With the notation as before recall that $C_N(\delta)=\an\beta^{-1}\sqrt{N}+\frac{\log(\delta)}{\an\beta\sqrt{N}}$ and define the measure $H_N^{\delta}(\text{d}x)$ on $[0,T]$ as follows
\begin{equation}
 H_N^{\delta}(\text{d}x)=\sum_{k=0}^{T\rn/\nu}\mathbf{1}\{\max_{i=k\nu+1,\dots,(k+1)\nu}X_N^0(i)\geq C_N(\delta)\}\delta_{k\nu/\rn}(\text{d}x)
\end{equation}

\begin{lemma}\label{kisrak}
 $\forall \epsilon >0$, $\mathcal{Y}$ a.s. $H_N^{\delta}$ converges to a homogeneous Poisson point process with intensity $\rho_{\delta}\in (0,\infty)$.
\end{lemma}
\begin{proof}
To prove this we use Proposition 16.17 of \cite{kal} which states that it is enough to prove that for any interval $I\subset [0,T]$, $\mathcal{Y}$ a.s.
\begin{equation}
 \lim_{N\to\infty}\mathbb{P}[H_N^{\delta}(I)=0|\mathcal{Y}]=e^{-\rho_{\delta}|I|}\;\text{ and }\;\limsup_{N\to\infty} \mathbb{E}[H_N^{\delta}(I)|\mathcal{Y}]\leq \rho_{\delta} |I|,
\end{equation}
where $|I|$ is the Lebesgue measure of $I$.

We do not need any additional estimates to prove the equations above. Take $I=[a,b]$. Note that
\begin{equation}\label{yeter}
\mathbb{P}[H_N^{\delta}(I)=0|\mathcal{Y}]=\pro[\max_{i=a\rn,\dots,b\rn}X_N^0(i)\leq C_N(\delta)].
\end{equation}
By Proposition \ref{cukkafa} we have $\mathcal{Y}$ a.s.
\begin{equation}
 \mathbb{P}[H_N^{\delta}(I)=0|\mathcal{Y}]\overset{N\to\infty} \longrightarrow e^{-\frac{K (b-a)}{\delta^{1/\beta^2}}}.
\end{equation}
Note that then it must be the case that $\rho_{\delta}=K/\delta^{1/\beta^2}$.
Considering the second condition, it is easy to see that $\mathbb{E}[H_N^{\delta}(I)|\mathcal{Y}]$ is equal to
\begin{align}
 \label{buttur}\sum_{k=\lfloor a\rn/\nu\rfloor}^{\lfloor b\rn/\nu\rfloor}\pro(\max_{k\nu+1,\dots,(k+1)\nu}X_N^0(i)\geq C_N(\delta)|\mathcal{Y}).
\end{align}

We use the block independent Gaussian process $X_N^1$. Note that for pairs $(i,j)$ in the same block we have $\Lambda_{ij}^0\geq \Lambda_{ij}^1 >0$ where $\Lambda_{ij}^1$ stands for the covariance of $X_N^1$. Thus, by the Gaussian comparison theorem, we have
\begin{align*}
 (\ref{buttur})&\leq \sum_{k=\lfloor a\rn/\nu\rfloor}^{\lfloor b\rn/\nu\rfloor}\pro(\max_{k\nu+1,\dots,(k+1)\nu}X_N^1(i)\geq C_N(\delta))\leq \frac{(b-a)\rn}{\nu}\pro(\max_{1,\dots,\nu}X_N^1(i)\geq C_N(\delta)).
\end{align*}
And by Proposition \ref{esasli} the last term above converges to $\rho_{\delta}|I|$.
\end{proof}

\newcommand{\got}{\tilde{S}_N^{\an}}
Now we can finish the proof of Proposition \ref{dayim}. Checking the convergence of finite dimensional distributions and condition (i) and the second half of (ii) of Theorem \ref{gottokiki} is completely analogous as for the original clock process $\bar{S}_N^{\an}$. Hence, we only have to prove that for any $\ep$ and $\eta$ given we can choose $\delta$ small enough so that $\mathcal{Y}$ a.s.
\begin{equation}
 \pro[w_{\got}(\delta)\geq \eta|\mathcal{Y}]\leq \epsilon
\end{equation}
for $N$ large enough.
Let
\begin{equation*}
 w_f([\tau,\tau+\delta])=\sup\{\min(|f(t_2)-f(t)|,|f(t)-f(t_1)|):\tau\leq t_1\leq t\leq t_2\leq \tau +\delta \}.
\end{equation*}
Let us define $\tilde{S}_{N,\delta}(\cdot)$ as
\begin{equation}
 \tilde{S}_{N,\delta}(t):=\tn^{-1}\sum_{i=1}^{\lfloor t\rn/\nu\rfloor \nu}e_i e^{\beta\sqrt{N} X_N^0(i)} \mathbf{1}\{e^{\beta\sqrt{N} X_N^0(i) }\leq\tn \delta^{1/\an}\}.
\end{equation}
It is clear that
\begin{equation}
 \tilde{S}_{N,\delta}(t)\leq \tn^{-1}\sum_{i=1}^{t\rn}e_i e^{\beta\sqrt{N} X_N^0(i)}\mathbf{1}\{e^{\beta\sqrt{N}X_N^0(i)}\leq\tn \delta^{1/\an}\}.
\end{equation}
Using this, for any $\epsilon$ and $\delta$ given by the equation (\ref{stan}) in the proof of Lemma \ref{godone} (setting $C=Ct/\eta/2$) $\mathcal{Y}$ a.s.
\begin{equation}
 \pro[\tilde{S}_{N,\epsilon}^{\an}(t)\geq \eta/2|\mathcal{Y} ]\leq \epsilon/2,
\end{equation}
for all $N$ large enough.

We will use the following inequality for $0\leq\alpha\leq1$ and $x,y\geq 0$
\begin{equation}\label{hadigari}
 (x+y)^{\alpha}\leq x^\alpha+y^\alpha.
\end{equation}

Consider the case $H_n^{\epsilon}([\tau,\tau+\delta])=0$. Then using (\ref{hadigari}) and the monotonicity, we have
\begin{equation}
  w_{\got}([\tau,\tau+\delta])\leq (\tilde{S}_N(\tau)+\tilde{S}_{N,\epsilon}(\tau+\delta))^{\an}-(\tilde{S}_N(\tau))^{\an}\leq \tilde{S}_{N,\epsilon}^{\an}(T)
\end{equation}
In a similar way, for the case  $H_N^{\epsilon}([\tau,\tau+\delta])= 1$ we have
\begin{equation}
 w_{\got}([\tau,\tau+\delta])\leq 2 \tilde{S}_{N,\epsilon}^{\an}(T).
\end{equation}
Using the last two inequalities above and Lemma \ref{kisrak} we can conclude that $\mathcal{Y}$ a.s.
\begin{equation*}
 \mathbb{P}[w_{\got}([\tau,\tau+\delta])\geq \eta|\tilde{S}_{N,\epsilon}^{\an}(T)\leq \eta/4,\mathcal{Y}]\leq\pro[H_N^{\epsilon}([\tau,\tau+\delta])\geq 2|\mathcal{Y}]\leq C \frac{K}{\epsilon}\delta^2.
\end{equation*}
Using
\begin{equation}
 w_{\got}(\delta)\leq \max\{w_{\got}([\tau,\tau+2\delta]):0\leq \tau \leq T,\tau=k\delta,k\in\mathbb{N}\},
\end{equation}
we get that $\mathbb{P}[w_{\got}(\delta)\geq \eta|\mathcal{Y}]$ bounded above by
\begin{align*}
 \sum_{k=0}^{T/\delta}\pro[w_{\got}([k\delta,(k+2)\delta])\geq 2|\mathcal{Y}]&\leq \pro[\got(T)\geq \eta/4|\mathcal{Y}]+\sum_{k=0}^{T/\delta}\pro[H_N^{\epsilon}([k\delta,(k+2)\delta])\geq 2|\mathcal{Y}]\\&\leq \epsilon/2+C (T/\delta) K\delta^2 /\epsilon
\end{align*}
which is less than $\epsilon$ for $\delta$ small enough. Hence, we have checked the first part of condition (ii) of Theorem \ref{gottokiki}. This finishes the proof Lemma \ref{kisrak}.

\end{proof}

\begin{proof}[Proof of Theorem \ref{theop22}]

We will actually prove the result of Theorem \ref{theop22} for a.s. $\mathcal{Y}$, that is, we will prove Theorem \ref{theo3}. Then taking the expectation over $\mathcal{Y}$ gives the result.
 
Recall that, for a fixed realization $\mathcal{Y}$, we are interested in the probability of the event
\begin{align*}
&A_N^{\epsilon}(t(N),t(N)(1+\theta)^{1/\an})=\{\dist(\sigma_N(t(N)),\sigma_N(t(N)(1+\theta)^{1/\an}))\leq N\epsilon/2\}.
\end{align*}

Let ${R}_N$ be the range of $\tilde{S}_N$. We have
\begin{equation}
\{R_N\cap (1,(1+\theta)^{1/\an})=\emptyset \}\subset A_N^{\epsilon}(\tn,\tn(1+\theta)^{1/\an})
\end{equation}
since if $\{R_N\cap (1,(1+\theta)^{1/\an})=\emptyset \}$ the random walk $\sigma_N$ makes less than $\nu$ steps in $[\tn,(1+\theta)^{1/\an}\tn]$. As a result, the overlap between $\sigma_N(\tn)$ and  $\sigma_N((1+\theta)^{1/\an}\tn)$ is O($\nu/N$).

Conversely, if $\{R_N\cap (1,(1+\theta)^{1/\an})\not=\emptyset \}$ then there exists a $u$ s.t. $\tilde{S}_N(u)\in (1,(1+\theta)^{1/\an})$, that is, $\tilde{S}_N^{\an}(u)\in(1,1+\theta)$. By Proposition \ref{dayim}, for $\eta>0$ small enough
\begin{equation*}
 \pro[\tilde{S}_N^{\an}(u+\eta)\in (1,1+\theta)]\geq 1-\delta
\end{equation*}
However, it implies that the random walk makes at least $\eta\rn$ steps and with a very high probability the overlap is 0. As result we have
\begin{equation}
 \mathbb{P}[R_N\cap (1,(1+\theta)^{1/\an})=\emptyset |\mathcal{Y}](1+o(1))= \pro [A_N^{\epsilon}(\tn,\tn(1+\theta)^{1/\an})]
\end{equation}
Since $R_N^{\an}$ is the range of $\tilde{S}_N^{\an}$ and the fact that extremal process $Y_\beta(K\cdot)$ does not hit points we have
\begin{equation}
 \pro[R_N\cap (1,(1+\theta)^{1/\an})=\emptyset |\mathcal{Y}]\overset{N\to\infty}\longrightarrow \pro[\{Y_\beta(Ku):\;u\in[0,\infty)\}\cap [t_1,t_2]=\emptyset].
\end{equation}
By the Proposition 4.8 on page 183 in \cite{res}, the range of $Y_\beta$, $\{Y_\beta(Ku):\;u\in[0,\infty)\}$ are the points of a Poisson point process on $[0,\infty)$ with mean measure $\mu((a,b))=\log(b^{1/\beta^2}/a^{1/\beta^2})$. Hence,
\begin{equation*}
\pro[\{Y_\beta(Ku):\;u\in[0,\infty)\cap (1,1+\theta)\}=\emptyset]=\exp(-\mu((1,1+\theta)))=\left(\frac{1}{1+\theta}\right)^{1/\beta^2}.
\end{equation*}
\end{proof}

\end{document}